\documentclass{amsart}

\usepackage[foot]{amsaddr}
\usepackage{todonotes}
\usepackage{graphicx}
\usepackage{hyperref}
\usepackage{standalone}
\usepackage[LGR,T1]{fontenc}
\usepackage[greek,english]{babel}
\languageattribute{greek}{polutoniko} 

\newtheorem{thm}{Theorem}[section]
\newtheorem{coro}[thm]{Corollary}
\newtheorem{prop}[thm]{Proposition}
\newtheorem{lem}[thm]{Lemma}
\theoremstyle{definition}
\newtheorem{defi}[thm]{Definition}
\newtheorem{rmk}[thm]{Remark}
\newtheorem{ex}[thm]{Example}

\newtheorem*{notation*}{Notation}

\newcommand{\Z}{\mathbb Z}
\newcommand{\N}{\mathbb N}
\newcommand{\R}{\mathbb R}

\newcommand{\mcg}{\mathrm{MCG}(S)}
\newcommand{\PSL}{\mathrm{PSL}(2,\mathbb{C})}
\newcommand{\HH}{\mathbb{H}}
\newcommand{\scc}{\mathcal{S}}

\newcommand{\T}{\mathcal{T}(S)}

\newcommand{\TT}{\mathcal{T}(S)\times\mathcal{T}(S)}
\newcommand{\curr}{\mathrm{Curr}(S)}

\newcommand{\Vol}{\mathrm{vol}}

\newcommand{\pmf}{\mathcal{PMF}(S)}
\newcommand{\mf}{\mathcal{MF}(S)}
\newcommand{\ext}{\mathrm{Ext}}

\newcommand{\dt}{d_{\mathcal{T}}}

\newcommand{\Lipi}{\mathrm{Lip}^{1}_{b}}

\title{A horofunction counterpart to Teichm\"uller distance}
\author{Hidetoshi Masai}
\address{Humanities and Sciences/Museum Careers, Musashino Art University
Bldg. 12, 1-736 Ogawa-cho, Kodaira-shi, Tokyo 187-8505}
\email{hmasai@musabi.ac.jp}
\thanks{The work of the author was partially supported by JSPS KAKENHI Grant Number 23K03085.}

\begin{document}
\begin{abstract}
We generalize the horofunction compactification to maps that are not distance functions. As an application we define a horofunction counterpart to the Teichm\"uller distance, and discuss its properties.
\end{abstract}
\maketitle
\tableofcontents
\section{Introduction}
Teichm\"uller space admits several natural distances, including the Teichm\"uller distance,
Thurston's asymmetric distance and the Weil-Petersson distance, etc.
A feature that is shared by some of these distances is that they are constructed from natural functionals, 
such as extremal length, hyperbolic length, or renormalized volumes of quasi-Fuchsian manifolds.
Therefore, it is natural to ask what kind of distances can be obtained in a similar way.
In this paper, we show that a ``horofunction counterpart'' to the Teichm\"uller distance can be obtained in such a way, and discuss its properties.

Horofunction compactifications have found numerous applications in the study of 
Teichm\"uller space and beyond. 
Walsh~\cite{Walsh} proved that the horofunction boundary of Thurston's distance recovers the Thurston boundary, while Liu--Su~\cite{LS} showed that the horofunction boundary of the Teichm\"uller distance coincides with the Gardiner--Masur boundary.
Horofunction techniques also play an important role in stochastic analysis of mapping class groups, see e.g. \cite{Karlsson-Two, KL, Masai, MT}.

The main new results of this paper can be summarized as follows.
\begin{itemize}
  \item A generalized construction of horofunctions is developed for maps of the form
        $I:M\times N\to\mathbb{R}$, encompassing classical examples 
        such as the Teichm\"uller and Thurston distances (Theorem \ref{thm.summary}).
  \item It is shown  (Theorem \ref{thm.volcc}) that a distance defined via convex core volume yields translation lengths 
        equal to the volumes of mapping tori of pseudo-Anosov maps, 
        complementing the analogous statement for the renormalized volume given in \cite{Masai}.
   \item We introduce a new distance $d_{E,2}$ (see Definition \ref{defi.E12}), which arises naturally from extremal length and should be viewed as the horofunction counterpart to the Teichm\"uller distance. We show that it is within bounded distance of Thurston's distance on the thick part of Teichm\"uller space (Theorem \ref{thm.E2}). We also prove that there are natural ``horo-compactifications'' via extremal length, which include the Gardiner--Masur and Thurston boundaries (Theorem \ref{thm.ext-final}).
\end{itemize}

\subsection{Organization of the paper}
The horofunction boundary of a metric space $(M,d)$ is classically defined 
as the closure of the functions
\begin{equation}
M \ni z \mapsto d(\cdot,z)-d(b,z) \in \mathrm{Lip}_1^{d}(M,d), \label{eq.horo-ori}
\end{equation}
where $b \in M$ is a basepoint and $\mathrm{Lip}_1^{d}(M,d)$ denotes the space of 1-Lipschitz functions that vanish at $b\in M$.
In Sections \ref{sec.dist} and \ref{sec.horoLip}, we ask whether this standard construction 
is the only possible way to obtain horofunction boundaries. 
More precisely, given a map $I:M\times N\to \mathbb{R}$, we show that one can associate 
distances on $M$ and $N$, and then construct corresponding horofunction compactifications. 
This general method not only recovers the setting of \eqref{eq.horo-ori} 
when $M=N$ and $I=d$, but also includes natural examples such as 
length functions on Teichm\"uller space.

In Section \ref{sec.Tdists}, we discuss how several distances on Teichm\"uller space 
arise in the framework developed in Sections \ref{sec.dist} and \ref{sec.horoLip}. 
Furthermore, in Subsection \ref{sec.volumes}, we discuss the renormalized volume 
and the convex core volume of quasi-Fuchsian manifolds and its relation to the volume of mapping tori.

A key ingredient in the definition of $d_{E,2}$ is the work of 
Mart\'inez--Granado and Thurston~\cite{GT} (see Sections \ref{sec.curr} and \ref{sec.dist-len}). 
Their result extends many natural length functions (including extremal length), 
originally defined on closed curves of $S$, to the space of geodesic currents $\curr$ 
introduced by Bonahon~\cite{Bonahon}. 
Bonahon proved that assigning the so-called Liouville current $L_{X}$ to 
$X\in\T$ yields an embedding $\T \hookrightarrow \curr$. 
It is known that the infinitesimal behavior of the intersection number $i(L_X,L_Y)$ 
recovers the Weil--Petersson distance~\cite[Theorem~19]{Bonahon}. 
In contrast, we give a global inequality relating $i(L_X,L_Y)$ to Thurston's Lipschitz distance, with explicit dependence on the systole in Proposition~\ref{prop.int-est}.

In Sections \ref{sec.curr} and \ref{sec.dist-len}, we observe that several 
properties of the intersection number $i(L_X,L_Y)$ have analogues for the 
extremal length $\ext_{X}(L_{Y})$ of the Liouville current given by \cite{GT}.
For this purpose, the extension of Minsky's inequality to geodesic currents 
plays a key role (Lemma \ref{lem.Minsky}). 
We also show that some of Miyachi's results on extremal length~\cite{ray2} 
can be extended to geodesic currents. 
After establishing several properties of $d_{E,2}$, we prove an extremal length 
analogue of Proposition~\ref{prop.int-est} in Corollary \ref{coro.Ext-est}.

\section{Distances and Lipschitz maps}\label{sec.dist}
Let $C(X)$ denote the set of real-valued functions on $X$.
Two functions $f,g\in C(X)$ are {\em equivalent} if there exists a constant $c\in\R$ such that $f(x) - g(x) = c$ for all $x\in X$.
We denote by $C_*(X)$ the equivalence classes of functions on $X$.

Let $M, N$ be sets and $I: M\times N\to\R$ a function. 
First, we define a notion that ensures $I$ is ``non-degenerate".
\begin{defi}\label{defi.sp}
Given $x\in M$, let $\ell_x:N\to \R $ denote the function $I(x,\cdot)$.
A map $I:M\times N\to\R$ {\em 	separates points of $M$} if for any distinct pair of points
$x, y\in M$, we have 
\begin{enumerate}
\item there exists $z\in N$ such that $I(x,z)-I(y,z)>0$, and 
\item $\ell_x\neq\ell_y$ in $C_*(N)$.
\end{enumerate}
We define analogously what it means for $I:M\times N\to\R$ to separate points of $N$.
\end{defi}
\begin{rmk}
To define a distance, the property (1) suffices.
We require (2) to consider horofunctions and compactifications.
\end{rmk}

We define a ($L^{\infty}$-)distance via the map $I$.
\begin{defi}
Let $M, N$ be sets and $I: M\times N\to\R$ a function.
Define a function $d_{I,M}:M\times M\to \R_{\geq 0}\cup\{\infty\}$ by
$$d_{I,M}(x,y) = \sup_{z\in N} \{I(x,z)-I(y,z)\}.$$
\end{defi}

The map $d_{I,M}$ often becomes asymmetric.
\begin{defi}
Given an asymmetric metric space $(M,d)$, we define a symmetrization of $d$ by 
$$
d^{\mathrm{sym}}(x,y) :=\max\{d(x,y), d(y,x)\}.
$$
This symmetrization will be used throughout the paper.
\end{defi}

We prepare a notion that we need to construct a horofunction boundary via $I$.
\begin{defi}
	Let $K>0$, and suppose that $(M,d)$ is a possibly asymmetric metric space.
	A map $I:M\times N\to \R $ is said to satisfy the {\em $K$-Lipschitz Condition} if for any $x,y\in M$ and $z\in N$,
	we have 
	$$|I(x,z)-I(y,z)|\leq K\cdot d^{\mathrm{sym}}(x,y).$$
\end{defi}

\begin{thm}\label{thm.dist}
	Suppose that $I:M\times N\to \R $ separates points on $M$.
	Suppose further that $$d_{I,M}(\cdot,\cdot)<\infty.$$
	Then, the function $d_{I,M}:M\times M\to \R_{\geq 0}\cup\{\infty\}$ is a (possibly asymmetric) distance on $M$.
	Furthermore, we have that
	\begin{itemize}
		\item 	$I$ satisfies $1$-Lipschitz condition with respect to $d_{I,M}$.
		\item        In particular, for any $n\in N$, $I(\cdot,n)$ is continuous for the topology determined by $d_{I,M}^{\mathrm{sym}}$.

	\end{itemize}

\end{thm}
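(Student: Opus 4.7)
The plan is to verify the three distance axioms for $d_{I,M}$ and then read off the Lipschitz and continuity claims from the very definition of the supremum.

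First I would check that $d_{I,M}$ is an asymmetric distance. Plugging $x=y$ gives $d_{I,M}(x,x)=\sup_{z\in N}0=0$. Conversely, if $x\neq y$, clause (1) of Definition~\ref{defi.sp} supplies $z\in N$ with $I(x,z)-I(y,z)>0$, so $d_{I,M}(x,y)>0$; in particular $d_{I,M}\geq 0$. For the triangle inequality I would use the pointwise decomposition
\[
I(x,w)-I(z,w)=\bigl[I(x,w)-I(y,w)\bigr]+\bigl[I(y,w)-I(z,w)\bigr]\leq d_{I,M}(x,y)+d_{I,M}(y,z),
\]
valid for every $w\in N$, and take the supremum in $w$ on the left. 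The assumption $d_{I,M}<\infty$ makes this a genuine inequality in $\mathbb{R}$.

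Second, the 1-Lipschitz condition is almost tautological. For any fixed $z\in N$, the inequalities $I(x,z)-I(y,z)\leq d_{I,M}(x,y)$ and $I(y,z)-I(x,z)\leq d_{I,M}(y,x)$ combine to give
\[
|I(x,z)-I(y,z)|\leq\max\{d_{I,M}(x,y),d_{I,M}(y,x)\}=d_{I,M}^{\mathrm{sym}}(x,y),
\]
which is the $K=1$ case of the Lipschitz condition. Continuity of $I(\cdot,n)$ with respect to the topology induced by $d_{I,M}^{\mathrm{sym}}$ is then immediate: if $d_{I,M}^{\mathrm{sym}}(x_k,x)\to 0$, the above bound forces $|I(x_k,n)-I(x,n)|\to 0$.

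I do not expect any real obstacle. The only point requiring care is the role of Definition~\ref{defi.sp}: only clause~(1) is used here, and it must be invoked for both ordered pairs $(x,y)$ and $(y,x)$ to conclude that $d_{I,M}(x,y)=0\iff x=y$ in the asymmetric sense. Clause~(2) plays no role at this stage and will only become relevant later, in the construction of the horofunction compactification.
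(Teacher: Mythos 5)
Your proposal is correct and follows essentially the same route as the paper: the same telescoping decomposition yields the triangle inequality (you take the supremum directly where the paper uses an $\varepsilon$-approximate maximizer, which is only a cosmetic difference), and the positivity and Lipschitz/continuity claims are handled identically. Your closing remark that only clause~(1) of Definition~\ref{defi.sp} is needed here, applied to both ordered pairs, is accurate and consistent with the paper's own remark following that definition.
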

\begin{proof}

	First, we prove that $d_{I,M}$ separates points and satisfies the triangle inequality.
	Note that $d_{I,M}(x,x) = 0$.
	Now suppose that $d_{I,M}(x,y) = 0$. 
	Then we have $$\sup_{z\in N} \{I(x,z) - I(y,z)\} = 0$$ for any $z\in N$,
	which implies $x = y$ as $I$ separates points of $M$.
	
	
	Take $\varepsilon>0$ arbitrarily.
	Then there exists $z_\varepsilon$ such that 
	$$d_{I,M}(x,y) \leq I(x,z_\varepsilon) - I(y,z_\varepsilon)+\varepsilon.$$
	Hence we have for any $a\in M$,
	\begin{align*}
		d_{I,M}(x,y) &= \sup_{z\in N}\{I(x,z)-I(y,z)\}\\
		&\leq I(x,z_\varepsilon) - I(y,z_\varepsilon)+\varepsilon\\
		&= I(x,z_\varepsilon) - I(a,z_\varepsilon) + 
		I(a,z_\varepsilon) -I(y,z_\varepsilon)+\varepsilon\\
		&\leq \sup_{z\in N}\{I(x,z)-I(a,z)\} + \sup_{z\in N}\{I(a,z)-I(y,z)\}+ \varepsilon\\ 
		&\leq d_{I,M}(x,a) + d_{I,M}(a,y) + \varepsilon.
	\end{align*}
	Since $\varepsilon>0$ was arbitrary, we have $d_{I,M}(x,y)\leq d_{I,M}(x,a)+d_{I,M}(a,y)$.
	Therefore, $d_{I,M}$ satisfies the triangle inequality.
	
	By definitions of $d_{I,M}$ and $I(\cdot,\cdot)$,
	 $I(\cdot,n)$ satisfies $1$-Lipschitz condition which implies that $I(\cdot, n)$ is continuous in the topology determined by $d_{I,M}^{\mathrm{sym}}$.

\end{proof}

\begin{rmk}\label{rmk.tri-d}
By the definition of $d_{I,M}$, we have
\begin{align*}
I(x,y)+I(y,z)-I(x,z)\geq I(x,y)-d_{I,M}(x,y).
\end{align*}
The left-hand side is nonnegative if $I$ satisfies the triangle inequality, and the right-hand side is always nonpositive.
Hence, one observes that the difference  $I(x, y) - d_{I,M}(x, y)$  measures how far  $I$  deviates from satisfying the triangle inequality.

In particular, if $I:M\times M\to \R_{\geq 0}$ is a distance (in this case we need to have $N=M$), we see that 
$$I(x, y) - d_{I,M}(x, y) = 0,$$
namely $I = d_{I,M}$.
\end{rmk}

\subsection{Examples of distances}
\begin{ex}[Quotients]
	Given sets $M,N$ and a function $I:M\times N\to \R$, let us suppose that 
	$d_{I,M}(x,y)<\infty$ for any $x,y\in M$. Then one can define an equivalence relation on $M$: let $x,y\in M$. We define
	$$x\sim y\iff \exists C\in\R \text{ s.t. }I(x,z) - I(y,z)=C,~\forall z\in N.$$
	Then $I$ naturally descends to $\bar I:(M/\!\!\sim)\times N\to\R$.
	If we further have (1) of Definition \ref{defi.sp} then $\bar I$ separates points of $M/\!\!\sim$.
\end{ex}

\begin{ex}[Euclidean distance]
Let $\langle \cdot, \cdot\rangle:\R^{n}\times\R^{n}\to \R$ denote the standard inner product.
We define $I:\R^{n}\times\R^{n}\setminus\{0\}\to\R$ by
$$I(x,y) = \left\langle x, \frac{y}{\|y\|}\right\rangle.$$
Then the distance $d_{I,\R^{n}}$ coincides with the standard Euclidean distance.
\end{ex}

\begin{ex}[Hyperbolic distance]
In \cite[Lemma 2.2]{Minsky-product}, Minsky observed that the function
$$
I:\HH\times\R\to\R, \quad ((x+y\sqrt{-1}), t)\mapsto \log\left(y+\frac{(t+x)^{2}}{y}\right)
$$
gives the hyperbolic distance on $\HH$ as $d_{I,\HH}$.
\end{ex}

\begin{ex}[Funk Geometry]
Given a convex subset of a Euclidean space, one may consider the so-called Funk geometry:

The \textit{Funk metric}, denoted by $F_{\Omega}$, 
on a convex domain $\Omega$ is defined for $x$ and $y$ in $\Omega$ by $F_{\Omega}(x,x) = 0$ and by 
$$
F_{\Omega}(x, y) = \sup_{H} \log \left( \frac{d(x,H)}{d(y,H)} \right),
$$
where $H$ runs over all the supporting hyperplanes.
See for example \cite[Chapter 2, Corollary 2.7]{handbookHilbert}.
Hence, the Funk metric is obtained by
$$
I:\Omega\times\text{space of supporting hyperplane}\to\R, \quad I(x,H) = \log(d(x,H)).
$$
See e.g. \cite{handbookHilbert} for more details about Funk, and Hilbert geometry.
\end{ex}

\begin{ex}[Spectral distances]
Suppose that $M$ is a set of length distances on a surface $S$,
and $N$ is a set of curves on $S$.
Then we define $I(x,n):=\log \ell_{x}(n)$, the length of the curve $n\in N$ with respect to the distance $x\in M$. Then we have
$$
d_{I,M}(x,y) = \sup_{n\in N}\log\frac{\ell_{x}(n)}{\ell_{y}(n)}.
$$
If $I$ separates points of $M$ and $d_{I,M}$ is finite, then $d_{I,M}$ is called a {\em spectral distance}.
Such distances appear in many situations, and their relation to machine learning is discussed in \cite{Karlsson-machine}.

In the latter half of this paper, we especially focus on the case where $M$ is the Teichm\"uller space of a surface. For instance, the Teichm\"uller distance and 
the Thurston distance appear in this way.
\end{ex}


\subsection{Metric completions}
When $d_{I,M}$ becomes an incomplete metric, we may consider the metric completion via Cauchy sequences.
One observes that $I$ extends to its metric completion.
\begin{prop}\label{prop.incomplete}
Let $I:M\times N\to\R$ be a function which separates points of $M$, and $d_{I,M}$ denotes the distance obtained by Theorem \ref{thm.dist}.
Suppose that $$d_{I,M}^{\mathrm{sym}}:M\times M\to \R_{\geq 0}$$ is incomplete and 
$\overline M$ denotes its metric completion.
Then the map $I:M\times N\to \R$ extends as $\bar I:\overline M\times N\to \R$.
\end{prop}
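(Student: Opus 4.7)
The plan is to use the 1-Lipschitz property of $I(\cdot,n)$ established in Theorem \ref{thm.dist} to extend $I$ continuously via Cauchy sequences, in the standard way one extends uniformly continuous functions to a completion.

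First I would fix $n\in N$ and observe that Theorem \ref{thm.dist} gives
\[
|I(x,n)-I(y,n)|\leq d_{I,M}^{\mathrm{sym}}(x,y)
\]
for all $x,y\in M$. Thus if $\{x_k\}\subset M$ is any Cauchy sequence with respect to $d_{I,M}^{\mathrm{sym}}$, the sequence $\{I(x_k,n)\}\subset\R$ is Cauchy and therefore convergent. For $\bar x\in\overline M$ represented by a Cauchy sequence $\{x_k\}$, define
\[
\bar I(\bar x,n):=\lim_{k\to\infty}I(x_k,n).
\]

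Next I would verify that this is well-defined, i.e.\ independent of the choice of representative. If $\{x_k\}$ and $\{y_k\}$ are equivalent Cauchy sequences, then $d_{I,M}^{\mathrm{sym}}(x_k,y_k)\to 0$, and the 1-Lipschitz bound gives $|I(x_k,n)-I(y_k,n)|\to 0$, so the two limits agree. For $x\in M$ embedded in $\overline M$, taking the constant sequence yields $\bar I(x,n)=I(x,n)$, so $\bar I$ genuinely extends $I$. The same Lipschitz bound passes to the limit, giving
\[
|\bar I(\bar x,n)-\bar I(\bar y,n)|\leq d_{I,M}^{\mathrm{sym}}(\bar x,\bar y)
\]
for all $\bar x,\bar y\in\overline M$, so the extension is automatically continuous in the first variable.

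There is essentially no obstacle here: the argument is the textbook extension-by-uniform-continuity, with the key input being that Theorem \ref{thm.dist} already supplies the uniform (in fact 1-Lipschitz) modulus of continuity of $I(\cdot,n)$ in $M$. The only very mild subtlety worth a sentence in the written proof is that we are extending only in the $M$-variable while $n\in N$ is held fixed; no assumptions on $N$ are needed, and $\bar I$ need not be continuous in $n$ (indeed, $N$ has no topology in our setting).
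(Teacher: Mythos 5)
Your argument is correct and follows essentially the same route as the paper: both use the $1$-Lipschitz bound $|I(x,n)-I(y,n)|\leq d_{I,M}^{\mathrm{sym}}(x,y)$ from Theorem \ref{thm.dist} to show that a Cauchy sequence in $M$ yields a Cauchy sequence of real numbers, and then define $\bar I$ by the limit. Your additional check that the limit is independent of the chosen representative of the equivalence class is a detail the paper leaves implicit, but it is the same proof.
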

\begin{proof}
The metric completion is obtained as the space of equivalence classes of Cauchy sequences of the metric.
Let $\{x_{k}\}\subset M$ be a Cauchy sequence of $d_{I,M}^{\mathrm{sym}}$.
Then by the definition of $d_{I,M}$, we see that for any $n\in N$.
$\{I(x_{k},n)\}\subset \R$ is a Cauchy sequence in $\R$ with the standard metric.
Hence, we may define 
$$I(\{x_{k}\},n):=\lim_{k\to\infty}I(x_{k},n)$$
for any $n\in N$.
\end{proof}
\begin{ex}
As we will see later in Section \ref{sec.volumes}, we may define several distances on the Teichm\"uller space via volume functions.
The resulting distances are known to be incomplete, and Proposition \ref{prop.incomplete} shows that those volume functions can be extended to the completions.
\end{ex}

\section{Horofunctions via a Lipschitz map}\label{sec.horoLip}
Let $I: M\times N\to\R$ be a function that separates points of $M$ and $N$.

We first consider horofunctions defined by $I$.
Let us fix a basepoint $b\in M$.
\begin{defi}\label{defi.horo}
Given $z\in N$, the horofunction $\ell_z:M\to \R$ at $z$ with respect to $I$ is defined by
$$\ell_z(\cdot) = I(\cdot,z) - I(b,z).$$
\end{defi}
\begin{rmk}
In the previous section, we considered $\ell_{z}(\cdot)\in C_{*}(M)$.
In Definition \ref{defi.horo}, we fix one of its representatives.
\end{rmk}

\begin{defi}
	Let $\Lipi(M,d)$ denote the space of $1$-Lipschitz maps with respect to the distance $d$ on $M$:
	$$\Lipi(M,d):=\{f:M\to\R\mid f(b) = 0, ~f\text{ satisfies $1$-Lipschitz condition}\}.$$	
\end{defi}
We equip $\Lipi(M,d)$ with the topology of pointwise convergence.
This topology is equivalent to the topology of uniform convergence
on compact sets, which is also equivalent to the compact-open topology.
\begin{prop}
	We have $\ell_z\in \Lipi(M,d_{I,M}^{\mathrm{sym}})$.
\end{prop}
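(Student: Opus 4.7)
The plan is to verify the two defining conditions of $\Lipi(M, d_{I,M}^{\mathrm{sym}})$ directly from Definition \ref{defi.horo}. For the normalization condition, I will simply evaluate $\ell_z(b) = I(b,z) - I(b,z) = 0$, which is immediate.

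For the $1$-Lipschitz condition, the key observation is that for any $x, y \in M$, the difference $\ell_z(x) - \ell_z(y)$ equals $I(x,z) - I(y,z)$, since the base-point normalization term cancels. By definition of $d_{I,M}$ as a supremum, I get
\[
I(x,z) - I(y,z) \leq \sup_{w \in N}\{I(x,w) - I(y,w)\} = d_{I,M}(x,y),
\]
and swapping the roles of $x$ and $y$ gives $I(y,z) - I(x,z) \leq d_{I,M}(y,x)$. Combining both inequalities,
\[
|\ell_z(x) - \ell_z(y)| \leq \max\{d_{I,M}(x,y), d_{I,M}(y,x)\} = d_{I,M}^{\mathrm{sym}}(x,y),
\]
which is exactly the $1$-Lipschitz condition with respect to $d_{I,M}^{\mathrm{sym}}$. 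Alternatively, this is already packaged in Theorem \ref{thm.dist}, which asserts that $I(\cdot, n)$ satisfies the $1$-Lipschitz condition with respect to $d_{I,M}$ (and hence with respect to the larger distance $d_{I,M}^{\mathrm{sym}}$). There is no real obstacle here; the statement is essentially a restatement of the Lipschitz property already established, recorded in the normalized form convenient for the horofunction construction that follows.
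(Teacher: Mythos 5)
Your proof is correct and follows the same route as the paper: the paper likewise observes that $|\ell_z(x)-\ell_z(y)| = |I(x,z)-I(y,z)| \leq d_{I,M}^{\mathrm{sym}}(x,y)$ (you merely unpack why this holds from the supremum definition of $d_{I,M}$) and notes $\ell_z(b)=0$ is immediate.
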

\begin{proof}
	Since $$|\ell_z(x)-\ell_z(y)| = |I(x,z)-I(y,z)|\leq d_{I,M}^{\mathrm{sym}}(x,y),$$
	 $\ell_z$ is $1$-Lipschitz. That $\ell_z(b)=0$ is obvious.
\end{proof}

\begin{prop}[see e.g. {\cite[Proposition 3.1]{MT}}]\label{prop.Lip-compact}
Let $(M,d)$ be a separable metric space. Then the space $\Lipi(M,d)$ of $1$-Lipschitz maps is a compact Hausdorff second countable (hence metrizable) space.
\end{prop}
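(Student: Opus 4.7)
The plan is to realize $\Lipi(M,d)$ as a closed subset of a compact product of intervals, invoke Tychonoff, and then exploit separability of $M$ to reduce to a countable product for metrizability.

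First I would observe that any $f\in\Lipi(M,d)$ satisfies the uniform pointwise bound $|f(x)|=|f(x)-f(b)|\leq d^{\mathrm{sym}}(b,x)$, so the evaluation map
\[
\Phi:\Lipi(M,d)\longrightarrow\prod_{x\in M}[-d^{\mathrm{sym}}(b,x),d^{\mathrm{sym}}(b,x)],\qquad \Phi(f)=(f(x))_{x\in M},
\]
is well-defined and, by the very definition of pointwise convergence, a topological embedding. The codomain is compact Hausdorff by Tychonoff. To finish the compactness/Hausdorff part I would show that the image is closed: if $(f_\alpha)$ is a net in $\Lipi(M,d)$ converging pointwise to some $g$ in the product, then the inequalities $|f_\alpha(x)-f_\alpha(y)|\leq d(x,y)$ and the normalization $f_\alpha(b)=0$ pass to the pointwise limit, so $g\in\Lipi(M,d)$.

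For second countability, I would use separability of $(M,d)$. Fix a countable dense set $D\subset M$ with $b\in D$, and consider the restriction map $\rho:\Lipi(M,d)\to\prod_{x\in D}[-d(b,x),d(b,x)]$, $\rho(f)=(f(x))_{x\in D}$. The target is a countable product of compact metric intervals, hence metrizable and second countable. The map $\rho$ is clearly continuous and injective; injectivity follows because any two $1$-Lipschitz functions agreeing on the dense set $D$ must agree everywhere by continuity. The key step is that $\rho$ is actually a homeomorphism onto its image: if $\rho(f_n)\to\rho(f)$, then for any $x\in M$ and any $\varepsilon>0$ one may pick $y\in D$ with $d(x,y)<\varepsilon/3$ and estimate
\[
|f_n(x)-f(x)|\leq |f_n(x)-f_n(y)|+|f_n(y)-f(y)|+|f(y)-f(x)|\leq \tfrac{2\varepsilon}{3}+|f_n(y)-f(y)|,
\]
which is eventually less than $\varepsilon$. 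Hence pointwise convergence on $D$ implies pointwise convergence on $M$, so $\rho$ is a homeomorphism onto its image in a second countable space, and $\Lipi(M,d)$ is second countable.

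The main conceptual step is the equivalence of pointwise convergence on $M$ with pointwise convergence on the countable dense set $D$; once this is in place, compactness from Tychonoff plus closedness gives compact Hausdorff, second countability is inherited from the countable product, and Urysohn's metrization theorem yields metrizability. I do not expect genuine obstacles here: everything reduces to the equicontinuity built into the $1$-Lipschitz condition together with separability of $M$.
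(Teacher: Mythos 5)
Your proof is correct. The paper gives no argument for this proposition---it only cites \cite[Proposition 3.1]{MT}---and your Tychonoff embedding into $\prod_{x\in M}[-d(b,x),d(b,x)]$, closedness of the image under pointwise limits, and reduction to a countable dense subset via the $1$-Lipschitz equicontinuity is exactly the standard proof that the cited reference supplies, so there is nothing to add (beyond noting that the $\varepsilon/3$ step could alternatively be replaced by the remark that a continuous injection from a compact space into a Hausdorff space is automatically a homeomorphism onto its image).
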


Gromov first considered horofunctions with respect to distances
 to compactify metric spaces.
It turns out that we may embed the space $N$ into $\Lipi(M)$ via $I$.
\begin{thm}\label{thm.lipi}
	Let $I: M\times N\to \R$ separate points of $M$ and $N$, and $d_{I,M}^{\mathrm{sym}}, d_{I,N}^{\mathrm{sym}}<\infty$.
	Let us define the topology on $N$ by $d_{I,N}^{\mathrm{sym}}$.
	Then the map $\mathcal{L}:N\to \Lipi(M, d_{I,M}^{\mathrm{sym}})$ defined by 
	$$\mathcal{L}(z) = \ell_z(\cdot)$$
	is continuous and injective.
\end{thm}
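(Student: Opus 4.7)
The plan is to verify injectivity and continuity separately; both follow almost immediately from the hypotheses after applying Theorem \ref{thm.dist} with the roles of $M$ and $N$ swapped.

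For injectivity, I would suppose $\mathcal{L}(z)=\mathcal{L}(z')$ for some $z,z'\in N$. Unwrapping the definition of $\ell_z$ gives $I(x,z)-I(x,z') = I(b,z)-I(b,z')$ for every $x\in M$, so the function $x\mapsto I(x,z)-I(x,z')$ on $M$ is constant. This means that the two maps $I(\cdot,z),\,I(\cdot,z'):M\to\mathbb{R}$ agree in $C_*(M)$. Since $I$ separates points of $N$, the analogue of condition (2) in Definition \ref{defi.sp} (with $M$ and $N$ interchanged) then forces $z=z'$.

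For continuity, the key observation is that the hypotheses of Theorem \ref{thm.dist} are symmetric in $M$ and $N$: because $I$ also separates points of $N$ and $d_{I,N}<\infty$, Theorem \ref{thm.dist} applied in the swapped roles shows that for each $x\in M$ the map $I(x,\cdot):N\to\mathbb{R}$ is $1$-Lipschitz with respect to $d_{I,N}^{\mathrm{sym}}$, hence continuous in the chosen topology on $N$. Thus, if $z_n\to z$ in $(N,d_{I,N}^{\mathrm{sym}})$, then for every $x\in M$ both $I(x,z_n)\to I(x,z)$ and $I(b,z_n)\to I(b,z)$, which yields $\ell_{z_n}(x)\to\ell_z(x)$. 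This is exactly pointwise convergence of $\mathcal{L}(z_n)$ to $\mathcal{L}(z)$, and since the target carries the topology of pointwise convergence and the source is metric (hence first countable), sequential continuity suffices.

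I do not anticipate a substantive obstacle. The only point that requires attention is notational: the symbol $\ell_{(\cdot)}$ is used in Definition \ref{defi.sp} for the function $I(x,\cdot)$ on $N$, but in Definition \ref{defi.horo} for the normalized function $I(\cdot,z)-I(b,z)$ on $M$; one must be careful that in the injectivity step, we invoke the separation hypothesis on $N$ via the first meaning and that both meanings are involved consistently in the equivalence $C_*$. Once this bookkeeping is in place, the argument is immediate from Theorem \ref{thm.dist}.
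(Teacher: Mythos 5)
Your proposal is correct and follows essentially the same route as the paper: injectivity comes from observing that $\mathcal{L}(z)=\mathcal{L}(z')$ forces $I(\cdot,z)-I(\cdot,z')$ to be constant on $M$ and then invoking condition (2) of point-separation on $N$, and continuity comes from the $1$-Lipschitz bound $|I(x,z)-I(x,z')|\leq d_{I,N}^{\mathrm{sym}}(z,z')$ applied at $x$ and at $b$ (the paper packages this as the single estimate $|\ell_z(\cdot)-\ell_{z'}(\cdot)|\leq 2d_{I,N}^{\mathrm{sym}}(z,z')$, which is even uniform in the first argument). Your notational caveat about the two uses of $\ell$ is well taken --- the paper's own proof in fact writes ``$x,y\in M$'' and ``separates points of $M$'' where $N$ is meant, so your bookkeeping is the correct reading.
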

\begin{proof}
	The proof of continuity goes similarly to the case of horofunctions defined by distances.
	Let $x,y\in N$ and $\ell_x,\ell_y$ denote corresponding horofunctions.
	Then for any $\cdot\in M$, we have
	\begin{align*}
		|\ell_x(\cdot)-\ell_y(\cdot)|&\leq |I(\cdot,x)-I(b,x)-I(\cdot,y)+I(b,y)|\\
		&\leq |I(\cdot,x)-I(\cdot,y)| + |I(b,y)-I(b,x)|\\
		&\leq 2d_{I,N}^{\mathrm{sym}}(x,y)
	\end{align*}
	Hence the map $\mathcal{L} : N\to \Lipi(M, d_{I,M}^{\mathrm{sym}})$ is continuous.
	
	Suppose that $\ell_x = \ell_y$ for some $x,y\in M$.
	Then for any $\cdot\in M$,
	\begin{align}
		\ell_x(\cdot) = \ell_y(\cdot)&\iff I(\cdot,x)-I(b,x) = I(\cdot,y)-I(b,y)\nonumber\\
		&\iff I(\cdot,x)-I(\cdot,y) = I(b,x)- I(b,y).\label{eq.inj}
	\end{align}
	As we have fixed $x,y,b\in M$, the quantity $C:=I(b,x)-I(b,y)$ is a constant, 
	which implies that $x=y$ since $I$ separates points of $M$.
	Therefore, we see that $\mathcal{L}$ is injective.
\end{proof}
By Proposition \ref{prop.Lip-compact} and Theorem \ref{thm.lipi}, 
we see that the closure $\overline{\mathcal{L}(N)}$ is compact.
In this paper, we call $\overline{\mathcal{L}(N)}$ a {\em horo-compactification} of $N$,
which might be an abuse of terminology as $\mathcal{L}$ is {\em not necessarily a homeomorphism onto its image}.

These horo-compactifications are natural spaces when we consider associated distances.
\begin{prop}\label{prop.attain-sup}
The supremum in the definition of $d_{I,M}$ is attained in some element in the horo-compactification $\overline{\mathcal{L}(N)}\subset\Lipi(M,d_{I,M})$.
In other words, for any $x,y\in M$,
there exists $h\in \overline{\mathcal{L}(N)}$, we have
$$
d_{I,M}(x,y) = h(x) - h(y).
$$
\end{prop}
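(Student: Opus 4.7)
The plan is to extract $h$ as a limit point of a maximizing sequence of horofunctions, exploiting the compactness of $\Lipi(M, d_{I,M}^{\mathrm{sym}})$ provided by Proposition \ref{prop.Lip-compact}. The key observation is that the basepoint term $I(b,z)$ in the definition of $\ell_z$ cancels in any difference at two points, so that
\[
\ell_z(x) - \ell_z(y) = I(x,z) - I(y,z) \qquad \text{for every } z \in N.
\]
In particular, the supremum defining $d_{I,M}(x,y)$ can be rewritten as $\sup_{z \in N} \bigl(\ell_z(x) - \ell_z(y)\bigr)$.

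First I would choose a sequence $\{z_n\} \subset N$ with $I(x, z_n) - I(y, z_n) \to d_{I,M}(x,y)$, which exists by definition of the supremum. The associated horofunctions $\ell_{z_n}$ all lie in $\mathcal{L}(N) \subset \Lipi(M, d_{I,M}^{\mathrm{sym}})$, a compact space by Proposition \ref{prop.Lip-compact}. Passing to a subsequence, I may assume $\ell_{z_n} \to h$ for some $h \in \overline{\mathcal{L}(N)}$ in the topology of pointwise convergence. Evaluating at the two fixed points $x$ and $y$,
\[
h(x) - h(y) = \lim_{n \to \infty} \bigl(\ell_{z_n}(x) - \ell_{z_n}(y)\bigr) = \lim_{n \to \infty} \bigl(I(x, z_n) - I(y, z_n)\bigr) = d_{I,M}(x,y),
\]
which is the desired identity.

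There is not much of a serious obstacle here: this is essentially the standard Gromov-style compactness extraction adapted to the generalized horofunction setting. The only mild subtlety is that the topology on $\Lipi$ is merely that of pointwise convergence, but this is exactly enough to pass to the limit at the two prescribed points $x$ and $y$. The argument implicitly uses the separability of $(M, d_{I,M}^{\mathrm{sym}})$ required for Proposition \ref{prop.Lip-compact}, which is a standing hypothesis whenever horo-compactifications are in play.
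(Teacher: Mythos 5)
Your argument is correct and follows essentially the same route as the paper's proof: both rewrite $d_{I,M}(x,y)$ as $\sup_{h\in\mathcal{L}(N)}\bigl(h(x)-h(y)\bigr)$ and extract a limit of a maximizing sequence using the compactness and metrizability of $\Lipi(M,d_{I,M}^{\mathrm{sym}})$ from Proposition \ref{prop.Lip-compact}. Your write-up simply spells out the sequential-compactness extraction that the paper leaves implicit.
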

\begin{proof}
By Proposition \ref{prop.Lip-compact}, we see that $\overline{\mathcal{L}(N)}$ is a compact metrizable space.
Therefore, any sequence in $\overline{\mathcal{L}(N)}$ has a convergent subsequence.
Since 
$$
d_{I,M}(x,y) = \sup_{n\in N} I(x,n)-I(y,n) = \sup_{h\in \mathcal{L}(N)} h(x)-h(y), 
$$
 we have the conclusion.
\end{proof}

\subsection{Group actions}\label{sec.action}
Suppose that a group $G$ is acting on both $M,N$, and we have
$$
I(gm,gn) = I(m,n)
$$
for any $g\in G$ and $m\in M, n\in N$.
Then we may consider the group action on the space of horofunctions by 
$$
g\cdot h(x) : = h(g^{-1}x) - h(g^{-1}b),
$$
where $h\in \Lipi(M,d)$ for some distance $d$.

Then the following theorem is straightforward from the definitions:
\begin{thm}\label{thm.grp-action}
If a group $G$ acts on both $M,N$, and $I:M\times N\to\R$ is invariant under the diagonal action of $G$.
Then the distances (if defined) $d_{I,M}, d_{I,N}$ are also invariant under the diagonal action of $G$.

Furthermore, the action of $G$ extends continuously to the horo-compactifications via $I$.
\end{thm}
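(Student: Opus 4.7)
The plan is to verify both assertions directly from the definitions; neither requires substantial new machinery, and the role of the hypothesis $I(gm,gn)=I(m,n)$ is simply to let us reparametrize suprema and to produce $G$-equivariance of the horofunction embedding $\mathcal{L}$.

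For the first assertion, I would start from the definition $d_{I,M}(gx,gy)=\sup_{z\in N}\{I(gx,z)-I(gy,z)\}$, substitute $z=gz'$ (the bijection is furnished by the $G$-action on $N$), and apply the invariance hypothesis to each term. The supremum over $z'\in N$ then coincides with $d_{I,M}(x,y)$. Exchanging the roles of $M$ and $N$ yields the analogous statement for $d_{I,N}$, and both statements pass at once to the symmetrizations $d_{I,M}^{\mathrm{sym}}$ and $d_{I,N}^{\mathrm{sym}}$. In particular, $G$ acts by $d_{I,M}^{\mathrm{sym}}$-isometries on $M$, which is what is needed below.

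For the second assertion, I would take the candidate action $g\cdot h(x):=h(g^{-1}x)-h(g^{-1}b)$ already displayed in Subsection \ref{sec.action}, interpreted on the ambient space $\Lipi(M,d_{I,M}^{\mathrm{sym}})$. The argument then breaks into four short checks: (i) that $g\cdot h\in\Lipi(M,d_{I,M}^{\mathrm{sym}})$, which uses that $G$ acts by isometries together with the normalization $(g\cdot h)(b)=0$; (ii) that the formula agrees with the $G$-action on $N$ through $\mathcal{L}$, i.e., $g\cdot\ell_z=\ell_{gz}$, which is a one-line computation in which the $I(b,z)$ terms cancel and the invariance of $I$ is used to rewrite $I(g^{-1}x,z)=I(x,gz)$; (iii) that $h\mapsto g\cdot h$ is continuous for the pointwise-convergence topology on $\Lipi(M,d_{I,M}^{\mathrm{sym}})$, which is immediate since evaluation of $g\cdot h$ at a point $x$ involves only the two pointwise values $h(g^{-1}x)$ and $h(g^{-1}b)$; and (iv) consequently $\mathcal{L}(N)$ is $G$-invariant, so by continuity of $h\mapsto g\cdot h$ its closure $\overline{\mathcal{L}(N)}$ is also $G$-invariant, and the action restricts to a continuous action on the horo-compactification.

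If anything qualifies as the \emph{main obstacle}, it is choosing the right action formula so that the extension is simultaneously well-defined on $\Lipi$ and compatible with $\mathcal{L}$; the $-h(g^{-1}b)$ correction is precisely what is needed to land back in the normalized space of functions vanishing at the basepoint, and without it step (i) fails. Once the correct formula is in hand, every step is a short computation, and the continuity of the extended action is a direct consequence of working with pointwise convergence inside the already-compact space $\Lipi(M,d_{I,M}^{\mathrm{sym}})$ furnished by Proposition \ref{prop.Lip-compact}.
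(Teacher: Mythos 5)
Your proposal is correct and follows exactly the route the paper intends: the paper states the result as ``straightforward from the definitions'' and omits the proof, and your four checks (reparametrizing the supremum by $z\mapsto gz'$, verifying $g\cdot h\in\Lipi(M,d_{I,M}^{\mathrm{sym}})$, the equivariance $g\cdot\ell_z=\ell_{gz}$, and continuity in the pointwise topology forcing invariance of the closure) are precisely the routine verification being left to the reader. Nothing is missing.
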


When we have an isometric action, one may consider the translation length:
$$
\tau_{d}(g) : = \lim_{n\to\infty}\frac{1}{n}d(g^{n}x,x).
$$
Similarly, we may consider
\begin{align*}
\tau_{I,M}(g) : = \limsup_{n\to\infty}\frac{1}{n}I(g^{n}x,y)\\
\tau_{I,N}(g) : = \limsup_{n\to\infty}\frac{1}{n}I(x,g^{n}y)\\
\end{align*}
Note that convergence of the limit in $\tau_{d}(g)$ is due to the triangle inequality.
Later, we prove that the limits in $\tau_{I,M}(g), \tau_{I,N}(g)$ exist under certain conditions in Proposition \ref{prop.horo-trans}.
As the notation suggests, $\tau_{I,M}(g)$ and $\tau_{I,N}(g)$ are independent of the choice of $x\in M$ and $y\in N$.
\begin{prop}\label{prop.transI}
Suppose that $d_{I,M}^{\mathrm{sym}},d_{I,N}^{\mathrm{sym}}<\infty$.
Then we see that $\tau_{I,M}, \tau_{I,N}$ are independent of $x\in M$ and $y\in N$.
\end{prop}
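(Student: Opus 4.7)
The plan is to reduce independence of the basepoints to the $1$-Lipschitz property of $I$ in each argument, combined with the $G$-invariance of the induced distances. Two ingredients from the preceding text do all the work. First, Theorem~\ref{thm.dist} applied to $I$ in its first argument (using that $I$ separates points of $M$ and $d_{I,M}^{\mathrm{sym}}<\infty$) gives that for every $z\in N$,
\[
|I(x,z)-I(x',z)|\leq d_{I,M}^{\mathrm{sym}}(x,x').
\]
By hypothesis, $I$ also separates points of $N$ and $d_{I,N}^{\mathrm{sym}}<\infty$, so the same theorem applied with the roles of $M$ and $N$ swapped yields, for every $w\in M$,
\[
|I(w,y)-I(w,y')|\leq d_{I,N}^{\mathrm{sym}}(y,y').
\]
Second, Theorem~\ref{thm.grp-action} ensures that $d_{I,M}^{\mathrm{sym}}$ and $d_{I,N}^{\mathrm{sym}}$ are $G$-invariant.

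For $\tau_{I,M}(g)$, I would fix any $x,x'\in M$ and $y,y'\in N$ and estimate the difference in two steps. Changing the first coordinate,
\[
\bigl|I(g^n x,y)-I(g^n x',y)\bigr|\leq d_{I,M}^{\mathrm{sym}}(g^n x,g^n x')=d_{I,M}^{\mathrm{sym}}(x,x'),
\]
which is a constant independent of $n$. Changing the second coordinate,
\[
\bigl|I(g^n x',y)-I(g^n x',y')\bigr|\leq d_{I,N}^{\mathrm{sym}}(y,y'),
\]
again a finite constant. Adding these and dividing by $n$ shows
\[
\left|\tfrac{1}{n}I(g^n x,y)-\tfrac{1}{n}I(g^n x',y')\right|\to 0,
\]
so the two sequences have the same $\limsup$; hence $\tau_{I,M}(g)$ does not depend on the choice of $x$ or $y$. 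The argument for $\tau_{I,N}(g)$ is identical after swapping the roles of $M$ and $N$.

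There is no real obstacle in this proposition; it is essentially a bookkeeping consequence of the two Lipschitz estimates together with $G$-invariance. The only point worth emphasizing is that we use the \emph{symmetric} pairing of hypotheses: the finiteness of both $d_{I,M}^{\mathrm{sym}}$ and $d_{I,N}^{\mathrm{sym}}$, together with the fact that $I$ separates points in both factors, is exactly what is needed to invoke Theorem~\ref{thm.dist} in both slots and thereby control the variation of $I(g^n x, y)$ in $x$ and $y$ independently. The use of $\limsup$ rather than $\lim$ is harmless, since adding a sequence that tends to zero does not change the upper limit.
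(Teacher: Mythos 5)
Your proof is correct and follows essentially the same route as the paper: both split $|I(g^nx,y)-I(g^nx',y')|$ into a change in the first slot bounded by $d_{I,M}^{\mathrm{sym}}$ and a change in the second slot bounded by $d_{I,N}^{\mathrm{sym}}$, then use $G$-invariance of the distances to make the bound independent of $n$ before dividing by $n$. The only cosmetic difference is that the paper compares against fixed basepoints $b_M,b_N$ rather than a second arbitrary pair, and the Lipschitz bounds you cite really just come from the definition of $d_{I,M}$ and $d_{I,N}$ rather than needing the full force of Theorem~\ref{thm.dist}.
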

\begin{proof}
Let us fix basepoints $b_{M}\in M$, $b_{N}\in N$. Then for any $x\in M$, $y\in N$ we have
\begin{align*}
& ~|I(g^{n}x,y)-I(g^{n}b_{M}, b_{N})| \\
= & ~|I(g^{n}x,y) - I(g^{n}b_{M},y) &&+ I(g^{n}b_{M},y) - I(g^{n}b_{M}, b_{N})|\\
\leq & ~d_{I,M}^{\mathrm{sym}}(g^{n}x,g^{n}b_{M}) &&+ d_{I,N}^{\mathrm{sym}}(y,b_{N})\\
\leq & ~d_{I,M}^{\mathrm{sym}}(x,b_{M}) &&+ d_{I,N}^{\mathrm{sym}}(y,b_{N})\\
\end{align*}
Since 
$$
\lim_{n\to\infty}\frac{1}{n}\left(d_{I,M}^{\mathrm{sym}}(x,b_{M}) + d_{I,N}^{\mathrm{sym}}(y,b_{N})\right)=0
$$
we have established the conclusion for \( \tau_{I,M}(g) \).  
A similar argument applies to \( \tau_{I,N}(g) \).
\end{proof}

\subsection{North-south dynamics and translation lengths}
In this subsection, we give a condition to have $\tau_{I,M} = \tau_{d_{I,M}}$.
The content of this subsection is a generalization of 
\cite[Theorem 6.21, Theorem 7.10]{Masai}, in which Theorem \ref{thm.pAvol} below is proved.
We first recall the definition of a verison of north-south dynamics.
\begin{defi}
Suppose that a group $G$ is acting on a compact set $X$ continuously.
An element $g\in G$ has a north-south dynamics if there exists two fixed points $x_{+}\neq x_{-}\in X$ of $g$ such that
\begin{itemize}
\item $g^{n}(x)\to x_{+}$ for any $x\neq x_{-}$, and
\item $g^{-n}(x)\to x_{-}$ for any $x\neq x_{+}$.
\end{itemize}
\end{defi}
We consider the action of groups on the horo-compactifications given in Theorem \ref{thm.grp-action}.
From now on, we suppose the assumption of Theorem \ref{thm.grp-action} and Proposition \ref{prop.transI}, namely we assume
\begin{itemize}
\item a group $G$ acts on both $M,N$, and $I:M\times N\to\R$ is invariant under the diagonal action of $G$.
\item $d_{I,M},d_{I,N}<\infty$.
\end{itemize}
The second condition should be compared with Remark \ref{rmk.tri-d}, and the fact that for the discussion of translation length of standard distances, the triangle inequality plays an important role.

\begin{prop}[c.f. {\cite[Theorem 6.21]{Masai}}]\label{prop.horo-trans}
Suppose that $g\in G$ has a north-south dynamics on the horo-compactification $\overline{\mathcal{L}(N)}\subset\Lipi(M,d_{I,M})$ with fixed points $h_{+},h_{-}\in\overline{\mathcal{L}(N)}$.
Then for any $h\neq h_{-}\in \overline{\mathcal{L}(N)}$, we have
$$
\lim_{n\to\infty}\frac{1}{n}h(g^{-n}b_{M}) = h_{+}(g^{-1}b_{M}) = \tau_{I,M}(g^{-1}).
$$
In particular, the limits in the definitions of $\tau_{I,M}(g), \tau_{I,N}(g)$ exist.
\end{prop}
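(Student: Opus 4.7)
The plan is to transfer the pointwise convergence $g^{n}\cdot h\to h_{+}$ supplied by the north-south dynamics into a statement about the orbit $\{g^{-n}b_{M}\}\subset M$, and then bridge the result over to $\tau_{I,M}(g^{-1})$ using the bounded-difference estimate behind Proposition~\ref{prop.transI}.

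First I would record the cocycle structure of the action. A direct induction on the recipe $(g\cdot h)(x)=h(g^{-1}x)-h(g^{-1}b_{M})$ gives
$$
(g^{n}\cdot h)(x)=h(g^{-n}x)-h(g^{-n}b_{M}),
$$
and applying $g\cdot h_{+}=h_{+}$ iteratively at $x=b_{M}$ yields the cocycle identity $h_{+}(g^{-n}b_{M})=n\,h_{+}(g^{-1}b_{M})$. Set $\lambda:=h_{+}(g^{-1}b_{M})$. For any $h\neq h_{-}$ we have $g^{n}\cdot h\to h_{+}$ in $\Lipi(M,d_{I,M}^{\mathrm{sym}})$, which is literally pointwise convergence on $M$. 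Evaluating at the single point $g^{-1}b_{M}$ and using the iterated formula,
$$
h(g^{-(n+1)}b_{M})-h(g^{-n}b_{M})=(g^{n}\cdot h)(g^{-1}b_{M})\longrightarrow\lambda.
$$
Writing $a_{n}:=h(g^{-n}b_{M})$, these are precisely the first differences of $\{a_{n}\}$ with $a_{0}=h(b_{M})=0$, so Ces\`aro averaging (Stolz--Ces\`aro) gives $a_{n}/n\to\lambda$, establishing the first equality of the proposition.

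For the second equality, I would choose any $z\in N$ with $\ell_{z}\neq h_{-}$; such a $z$ exists unless $\mathcal{L}(N)=\{h_{-}\}$, in which case $h_{+}=h_{-}$, contradicting the definition of north-south dynamics. Since $I(g^{-n}b_{M},z)=\ell_{z}(g^{-n}b_{M})+I(b_{M},z)$, the previous step gives $\tfrac{1}{n}I(g^{-n}b_{M},z)\to\lambda$; the bounded-difference estimate in the proof of Proposition~\ref{prop.transI} (now applied with $z$ in place of $b_{N}$) then forces $\tfrac{1}{n}I(g^{-n}x,y)\to\lambda$ for every $(x,y)$, upgrading the $\limsup$ defining $\tau_{I,M}(g^{-1})$ to an honest limit equal to $\lambda$. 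The ``in particular'' claim follows by applying the same result to $g^{-1}$ (which also has north-south dynamics, with $h_{\pm}$ swapped) together with the $G$-invariance identity $I(g^{n}x,y)=I(x,g^{-n}y)$, which gives $\tau_{I,N}(g)=\tau_{I,M}(g^{-1})$ and $\tau_{I,M}(g)=\tau_{I,N}(g^{-1})$.

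I expect the key obstacle to be the middle step: convergence in $\Lipi$ is only pointwise, so all one extracts from $g^{n}\cdot h\to h_{+}$ at a fixed $n$ are numerical values, and the nontrivial move is to recognize that evaluating at the single clever point $g^{-1}b_{M}$ produces exactly the first differences of the sequence $\{h(g^{-n}b_{M})\}$ one wants to control; the remaining steps are bookkeeping with $G$-invariance and the cocycle identity for $h_{+}$.
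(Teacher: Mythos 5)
Your proposal is correct and follows essentially the same route as the paper: the identity $h(g^{-(n+1)}b_{M})-h(g^{-n}b_{M})=(g^{n}\cdot h)(g^{-1}b_{M})$ combined with Ces\`aro averaging is exactly the paper's telescoping-sum argument for \eqref{eq.horo-cocycle}, and the identification with $\tau_{I,M}(g^{-1})$ via Proposition \ref{prop.transI} is the same bridge. Your extra care in choosing $z\in N$ with $\ell_{z}\neq h_{-}$ and in deducing the existence of the $\tau_{I,N}$ limit from the swap $\tau_{I,N}(g)=\tau_{I,M}(g^{-1})$ are welcome refinements of details the paper leaves implicit.
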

\begin{proof}
Let us suppose that $h\in\overline{\mathcal{L}(N)}$.
Since $g^{i}\cdot h(g^{-1}b_{M}) = h(g^{-i-1}b_{M}) - h(g^{-i}b_{M})$ we have
\begin{equation}
h(g^{-n}b_{M}) = \sum_{i=0}^{n-1}g^{i}\cdot h(g^{-1}b_{M}).\label{eq.horo-cocycle}
\end{equation} 
By north-south dynamics, when $h\neq h_{-}$, we have that $g^{i}\cdot h\to h_{+}$.
Hence for any $\epsilon>0$, there exists $K\in\N$ such that for any $k\geq K$, we have
$$
|g^{k}h(g^{-1}b_{M}) - h_{+}(g^{-1}b_{M})|<\epsilon.
$$
Therefore, 
\begin{align*}
&\lim_{n\rightarrow\infty}\frac{1}{n}\left|h(g^{-n}b_{M}) - nh_+(g^{-1}b_{M})\right|\\
= &\lim_{n\rightarrow\infty}\frac{1}{n}\left|\sum_{i=0}^{K}\left(g^i\cdot h(g^{-1}b_{M}) - h_+(g^{-1}b_{M})\right) + 
\sum_{i=K}^{n-1}\left(g^i\cdot h(g^{-1}b_{M})-h_+(g^{-1}b_{M})\right)\right|
 \leq \epsilon.
\end{align*}
As $\epsilon>0$ is arbitrary, we have 
\begin{equation}
\lim_{n\to\infty}\frac{1}{n}h(g^{-n}b_{M}) = h_{+}(g^{-1}b_{M}).\label{eq.nstrans}
\end{equation}
Note that we have \eqref{eq.nstrans} regardless $h\in\mathcal{L}(N)$ or $h\in\overline{\mathcal{L}(N)}\setminus\mathcal{L}(N)$.

Then suppose that $h\in\mathcal{L}(N)$, in other words $h(\cdot) = I(\cdot,y)-I(b_{M},y)$ for some $y\in N$.
Then by Proposition \ref{prop.transI}, we see that
\begin{equation}
\lim_{n\to\infty}\frac{1}{n}h(g^{-n}b_{M}) = \tau_{I,M}(g^{-1}).\label{eq.hhtrans}
\end{equation}
where the existence of the limit follows from \eqref{eq.nstrans} and Proposition \ref{prop.transI}.

Thus we have
$$
\lim_{n\to\infty}\frac{1}{n}h(g^{-n}b_{M}) = h_{+}(g^{-1}b_{M}) = \tau_{I,M}(g^{-1}).
$$
\end{proof}

\begin{coro}\label{coro.horo-trans}
With the same assumption as Proposition \ref{prop.horo-trans}, we have
$$
\lim_{n\to\infty}\frac{1}{n}h_{-}(g^{-n}b_{M}) = h_{-}(g^{-1}b_{M}) = -\tau_{I,M}(g).
$$
\end{coro}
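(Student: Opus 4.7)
The plan is to exploit the fact that $h_{-}$ is fixed by $g$ in the horo-compactification, together with a reapplication of Proposition \ref{prop.horo-trans} to the element $g^{-1}$.

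First I would unpack the fixed-point equation $g\cdot h_{-}=h_{-}$. Using the definition of the $G$-action on horofunctions from Subsection \ref{sec.action}, this reads
\begin{equation*}
h_{-}(x)=h_{-}(g^{-1}x)-h_{-}(g^{-1}b_{M})\quad\text{for all }x\in M.
\end{equation*}
Setting $x=g^{-(n-1)}b_{M}$ and iterating gives the telescoping identity $h_{-}(g^{-n}b_{M})=n\cdot h_{-}(g^{-1}b_{M})$. Consequently $\frac{1}{n}h_{-}(g^{-n}b_{M})=h_{-}(g^{-1}b_{M})$ on the nose, which establishes the first equality of the corollary without any dynamical input.

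Next I would compute $h_{-}(g^{-1}b_{M})$ by applying Proposition \ref{prop.horo-trans} to the element $g^{-1}$. Since $g$ has north-south dynamics on $\overline{\mathcal{L}(N)}$ with attracting fixed point $h_{+}$ and repelling fixed point $h_{-}$, the element $g^{-1}$ has north-south dynamics on the same space with attractor $h_{-}$ and repeller $h_{+}$. Choosing any $h\in\mathcal{L}(N)$ with $h\neq h_{+}$, Proposition \ref{prop.horo-trans} (applied to $g^{-1}$) yields
\begin{equation*}
\lim_{n\to\infty}\frac{1}{n}h(g^{n}b_{M})=h_{-}(gb_{M})=\tau_{I,M}(g).
\end{equation*}
Finally I would combine this with the cocycle identity above, specialized to $x=gb_{M}$:
\begin{equation*}
h_{-}(gb_{M})=h_{-}(b_{M})-h_{-}(g^{-1}b_{M})=-h_{-}(g^{-1}b_{M}).
\end{equation*}
Hence $h_{-}(g^{-1}b_{M})=-\tau_{I,M}(g)$, completing the chain of equalities.

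There is no real obstacle here; the only point requiring a moment of care is keeping the signs straight when passing between $g$ and $g^{-1}$ and between $g b_{M}$ and $g^{-1}b_{M}$ via the cocycle relation for the fixed horofunction $h_{-}$. Everything else is a bookkeeping consequence of $h_{-}$ being fixed and of reusing Proposition \ref{prop.horo-trans} with the roles of $h_{+}$ and $h_{-}$ exchanged.
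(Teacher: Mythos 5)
Your proof is correct and follows essentially the same route as the paper: both apply Proposition \ref{prop.horo-trans} to $g^{-1}$ and exploit the $g$-invariance of $h_{-}$ via the cocycle relation. The only (harmless) difference is that you derive the exact identity $h_{-}(g^{-n}b_{M})=n\,h_{-}(g^{-1}b_{M})$, so the first equality holds without taking a limit, whereas the paper uses the equivalent relation $h_{-}(g^{-n}b_{M})=-h_{-}(g^{n}b_{M})$.
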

\begin{proof}
Applying Proposition \ref{prop.horo-trans} to $g^{-1}$, and noting
$$
h_{-}(g^{-n}b_{M}) = g^{-n}\cdot h_{-}(g^{-n}b_{M}) = h_{-}(b_{M}) - h_{-}(g^{n}b_{M}) = - h_{-}(g^{n}b_{M})
$$
the claim follows.
\end{proof}

\begin{thm}[c.f. {\cite[Theorem 7.10]{Masai}}]\label{thm.ns-trans}
Suppose that 
\begin{itemize}
\item $g\in G$ has a north-south dynamics on the horo-compactification $\overline{\mathcal{L}(N)}\subset\Lipi(M,d_{I,M}^{\mathrm{sym}})$ with fixed points $h_{+},h_{-}\in\overline{\mathcal{L}(N)}$.
\item $\tau_{I,M},\tau_{I,N},\tau_{d_{I,M}}$ are all non-negative.
\end{itemize}
Then we have
\begin{enumerate}
\item if $\tau_{d_{I,M}}(g) > 0$, then
$$
\tau_{I,M}(g) =\tau_{I,M}(g^{-1}) = \tau_{d_{I,M}}(g).
$$
\item if $\tau_{d_{I,M}}(g)= 0$, then at least one of 
$\tau_{I,M}(g)$, or $\tau_{I,M}(g^{-1})$ is zero.
\end{enumerate}

\end{thm}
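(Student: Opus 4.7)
The plan is to realize $d_{I,M}(g^n b_M,b_M)$ as a supremum of horofunction values via Proposition~\ref{prop.attain-sup}, then analyze this supremum using the cocycle identity \eqref{eq.horo-cocycle} together with the north-south dynamics of $g$ on the compact space $\overline{\mathcal{L}(N)}$. The starting point is the orbit formulas for the two fixed horofunctions: iterating $g\cdot h_\pm=h_\pm$ and combining with Proposition~\ref{prop.horo-trans} and Corollary~\ref{coro.horo-trans} gives
\[
h_-(g^n b_M)=n\,\tau_{I,M}(g),\qquad h_+(g^n b_M)=-n\,\tau_{I,M}(g^{-1}).
\]

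For the lower bound, Proposition~\ref{prop.attain-sup} applied to $h_-$ yields $d_{I,M}(g^n b_M,b_M)\ge h_-(g^n b_M)=n\,\tau_{I,M}(g)$, hence $\tau_{d_{I,M}}(g)\ge\tau_{I,M}(g)$. Combined with the nonnegativity assumption, this settles Part~(2) at once: $\tau_{d_{I,M}}(g)=0$ forces $\tau_{I,M}(g)=0$, so at least one of the two translation lengths is zero.

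For the upper bound in Part~(1), pick $h_n\in\overline{\mathcal{L}(N)}$ attaining the supremum and pass to a convergent subsequence $h_{n_k}\to h^\ast$ via Proposition~\ref{prop.Lip-compact}. Using the cocycle,
\[
h_n(g^n b_M)=\sum_{i=0}^{n-1}c(g^{-i}h_n),\qquad c(h):=h(g\,b_M),
\]
with $c$ continuous on $\overline{\mathcal{L}(N)}$. The case $h^\ast=h_+$ would force $n_k^{-1}h_{n_k}(g^{n_k}b_M)\to c(h_+)=-\tau_{I,M}(g^{-1})\le 0$, contradicting the lower bound under $\tau_{d_{I,M}}(g)>0$; so $h^\ast\neq h_+$. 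Then north-south gives $g^{-i}h^\ast\to h_-$, hence $c(g^{-i}h^\ast)\to c(h_-)=\tau_{I,M}(g)$, and a window-splitting at a large threshold $K$ (the first $K$ terms contribute $O(K)/n_k\to 0$, the tail is controlled by keeping $g^{-i}h_{n_k}$ in a neighborhood of $h_-$ where $c\approx\tau_{I,M}(g)$) yields $\tau_{d_{I,M}}(g)\le\tau_{I,M}(g)$, hence equality. Applying the identical argument to $g^{-1}$, whose north-south dynamics swap $h_+$ and $h_-$, gives $\tau_{d_{I,M}}(g^{-1})=\tau_{I,M}(g^{-1})$; the remaining equality $\tau_{I,M}(g)=\tau_{I,M}(g^{-1})$ under $\tau_{d_{I,M}}(g)>0$—the most delicate point in the asymmetric setting—I would extract by combining the two upper-bound analyses with the 1-Lipschitz estimates $n\,\tau_{I,M}(g^{\pm 1})\le d_{I,M}^{\mathrm{sym}}(g^n b_M,b_M)$, noting that the assumption $\tau_{d_{I,M}}(g)>0$ excludes the degenerate case in which $h^\ast=h_+$ could appear when analyzing $g^{-1}$.

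The main obstacle is the uniformity needed in the window-splitting step: the convergence $n^{-1}h(g^n b_M)\to\tau_{I,M}(g)$ is only pointwise in $h\neq h_+$, yet it must be applied to the varying maximizers $h_n$. Extracting from the abstract north-south hypothesis a basin of attraction near $h_-$ (invariant under $g^{-1}$) inside $\overline{\mathcal{L}(N)}\setminus\{h_+\}$ is the key technical input—one that should follow from compactness of $\overline{\mathcal{L}(N)}$ and continuity of the action, but that requires the most careful analysis.
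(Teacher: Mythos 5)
Your setup (the orbit formulas $h_\pm(g^nb_M)=\pm n\,\tau_{I,M}(g^{\mp1})$, the lower bound $\tau_{d_{I,M}}(g)\ge\tau_{I,M}(g)$ via evaluating $h_-$ against $d_{I,M}(g^nb_M,b_M)\ge h(g^nb_M)-h(b_M)$, and the immediate disposal of Part~(2)) is correct and matches the easy half of the paper's argument. But the upper bound is where your proof has a genuine gap, and it is exactly the point you flag yourself. Your window-splitting needs the following: for the maximizers $h_{n_k}$ (or for a subsequential limit $h^\ast\ne h_+$), the iterates $g^{-i}h_{n_k}$ must lie near $h_-$ for all $i$ in a range $[K,n_k]$ with $K$ independent of $k$. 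The north-south hypothesis in this paper is only \emph{pointwise}: $g^{-i}h\to h_-$ for each fixed $h\ne h_+$. Continuity of the action controls $g^{-i}h_{n_k}$ only for finitely many $i$ at a time, so knowing $h_{n_k}\to h^\ast$ gives no control over $g^{-i}h_{n_k}$ when $i$ is comparable to $n_k$; the required statement is a double limit whose interchange needs a locally uniform basin of attraction, which is not part of the hypothesis and does not follow formally from compactness plus pointwise convergence. The same problem invalidates your intermediate claim that $h^\ast=h_+$ would force $n_k^{-1}h_{n_k}(g^{n_k}b_M)\to c(h_+)$: points near the repelling fixed point can escape toward $h_-$ well within the window $[0,n_k]$. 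Finally, the equality $\tau_{I,M}(g)=\tau_{I,M}(g^{-1})$ is only gestured at ("combining the two upper-bound analyses"), so it inherits the gap.

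The paper's proof is built precisely to avoid this uniformity issue, and the mechanism is worth internalizing: instead of tracking the maximizers directly, it forms the empirical measures $\mu_k=\frac1k\sum_{i=0}^{k-1}(g^i)_*\delta_{h_k}$, whose integrals against the continuous cocycle generator $F(h)=-h(g^{-1}b_M)$ equal $\frac1k d_{I,M}(b_M,g^{-k}b_M)\ge\tau_{d_{I,M}}(g)$. A weak-$\ast$ limit is $g$-invariant; Krein--Milman plus the Birkhoff ergodic theorem then produce a \emph{single} horofunction $h$ whose orbit average of $F$ is at least $\tau_{d_{I,M}}(g)$, and for a single $h$ the pointwise dichotomy ($h=h_-$ versus $h\ne h_-$, settled by Proposition~\ref{prop.horo-trans} and Corollary~\ref{coro.horo-trans}) suffices. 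Equivalently: under north-south dynamics every non-fixed point is wandering, so every invariant probability measure is supported on $\{h_+,h_-\}$, forcing $\int F\,d\mu\in\{\tau_{I,M}(g),\,-\tau_{I,M}(g^{-1})\}$ and hence $\mu=\delta_{h_-}$ when $\tau_{d_{I,M}}(g)>0$. If you want to salvage your direct approach, you would have to either add locally uniform north-south dynamics as a hypothesis or reprove it in this setting; as written, the argument does not close.
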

\begin{proof}
By Proposition \ref{prop.horo-trans} and the definition of $d_{I,M}$, we see that
\begin{equation}
\tau_{I,M}(g) \leq \tau_{d_{I,M}}(g).
\end{equation}
By Proposition \ref{prop.attain-sup}, we see that for any $k\in\N$ there exists $h_{k}\in\overline{\mathcal{L}(N)}$ such that
$$
d_{I,M}(b_{M},g^{k}b_{M}) = h_{k}(b_{M}) - h_{k}(g^{k}b_{M}) = - h_{k}(g^{k}b_{M})
$$
We define $F:\overline{\mathcal{L}(N)}\to\R$ by $F(h) = -h(g^{-1}b_{M})$.
By \eqref{eq.horo-cocycle}, we have
\begin{equation}
-h(g^{-k}b_{M}) = \sum_{i=0}^{k-1}(-g^{i}h(g^{-1}b_{M})) = \sum_{i=0}^{k-1} F(g^{i}\cdot h).\label{eq.F-cocycle}
\end{equation}
Now we define a Borel probability measure $\mu_{k}$ on $\overline{\mathcal{L}(N)}$ by
$$\mu_k=\frac{1}{k}\sum_{i=0}^{k-1}(g^i)_*\delta_{h_k}$$ where $\delta_{h}$ is the Dirac measure at $h\in\overline{\mathcal{L}(N)}$.
By \eqref{eq.F-cocycle}, we see that the $\mu_{k}$ satisfies
$$\int F(h)d\mu_k(h) = \frac{1}{k}d_{I,M}(b_{M},g^{-k}b_{M}).$$

Notice that by the triangle inequality, we have
	$$\frac{1}{k}d_{I,M}(b_{M},g^{-k}b_{M})\geq \lim_{i\rightarrow\infty}\frac{1}{i}d_{I,M}(b_{M},g^{-i}b_{M})=\tau_{d_{I,M}}(g).$$

As $\overline{\mathcal{L}(N)}$ is compact, by taking a subsequence if necessary, we have a weak limit point $\mu_{\infty}$ of the sequence $\{\mu_{k}\}$.
By the definition of the weak limit, we have
	$$\int F(h)d\mu_\infty(h)\geq \tau_{d_{I,M}}(g).$$

Furthermore, the definition of $\mu_k$ implies that $\mu_\infty$ is $g$ invariant, 
i.e. $$g_*\mu_\infty = \mu_\infty.$$
We now consider the space $\mathcal{M}_g(\overline{\mathcal{L}(N)})$ of $g$ invariant measures on $\overline{\mathcal{L}(N)}$.
As $\mathcal{M}_g(\overline{\mathcal{L}(N)})$ is convex and $\mu_\infty\in\mathcal{M}_g(\overline{\mathcal{L}(N)})$, the Krein-Milman theorem shows that there is an extreme point $\mu\in\mathcal{M}_g(\overline{\mathcal{L}(N)})$ such that
	$$\int F(h)d\mu(h)\geq \tau_{d_{I,M}}(g).$$
	The standard theory of ergodic measures says that ergodic measures in $\mathcal{M}_g(\overline{\mathcal{L}(N)})$ are precisely the extreme points.
	Therefore, we see that $\mu$ is ergodic.
	Then by the Birkhoff ergodic theorem, for $\mu$-a.e. $h\in\overline{\mathcal{L}(N)}$ we have
	$$\lim_{k\rightarrow\infty}\frac{1}{k}\sum_{i=0}^{k-1}F(h) = \int F(h)d\mu(h)\geq \tau_{d_{I,M}}(g).$$
	Hence, by (\ref{eq.F-cocycle}), 
	$$\lim_{k\rightarrow\infty}\frac{1}{k}(-h(g^{-k}b_{M}))\geq \tau_{d_{I,M}}(g).$$
	On the other hand, as $-h(g^{-k}b_{M})\leq d_{I,M}(b_{M},g^{-k}b_{M})$, we have 
	$$\lim_{k\rightarrow\infty}\frac{1}{k}(-h(g^{-k}b_{M}))\leq \tau_{d_{I,M}}(g).$$
	Hence, we have
	\begin{equation}
	\lim_{k\rightarrow\infty}\frac{1}{k}(-h(g^{-k}b_{M})) = \tau_{d_{I,M}}(g).\label{eq.t-len1}
	\end{equation}
	A similar argument applied for $F'(h):=h(gb_{M})$, we also have
	\begin{equation}
	\lim_{k\rightarrow\infty}\frac{1}{k}\cdot h(g^{k}b_{M}) = \tau_{d_{I,M}}(g).\label{eq.t-len2}
	\end{equation}
	Now, we compare \eqref{eq.t-len1} and \eqref{eq.t-len2} with Proposition \ref{prop.horo-trans} and Corollary \ref{coro.horo-trans}.
	Since $\tau_{d_{I,M}}(g)\geq 0$, we see that if $\tau_{d_{I,M}}(g)$ is positive then \eqref{eq.t-len1} can only hold when $h=h_{-}$. Hence we have
	$$	
	\tau_{I,M}(g) = \tau_{I,M}(g^{-1}) = \tau_{d_{I,M}}(g).
	$$
	
	Similarly if $\tau_{d_{I,M}}(g)$ is zero we must have
	at least one of $\tau_{I,M}(g^{-1})$,  $\tau_{d_{I,M}}(g)$ is zero.

\end{proof}

\subsection{Summary of results}

\begin{thm}\label{thm.summary}
Suppose $I:M\times N\to \R$ separates points of $M,N$ and 
$$d_{I,M}^{\mathrm{sym}},d_{I,N}^{\mathrm{sym}}<\infty.$$
Then we have the compactifications of $M$ and $N$ via the following maps.
In other words, the following maps are continuous and injective:
\begin{enumerate}
\item $N \hookrightarrow \Lipi(M,d_{I,M}^{\mathrm{sym}})$ defined by horofunctions by $I$ (Theorem \ref{thm.lipi}),
\item $M \hookrightarrow \Lipi(N,d_{I,N}^{\mathrm{sym}})$ defined by horofunctions by $I$ (Theorem \ref{thm.lipi}),
\item $N \hookrightarrow \Lipi(N,d_{I,N})$ defined by horofunctions by $d_{I,N}$, and
\item $M \hookrightarrow \Lipi(M,d_{I,M})$ defined by horofunctions by $d_{I,M}$.
\end{enumerate}

In particular, if $M=N$ then we have 
\begin{itemize}
\item
two a priori different distances on $M$, and
\item 
four a priori different horo-compactifications of $M$
\end{itemize}
via the map $I$.

If $G$ acts on both $M,N$ and preserves $I$, then distances are also preserved by the action of $G$, and the action of $G$ extends continuously to all the horo-compactifications given above.
Furthermore, if an element $g\in G$ has a north-south dynamics on the compactification of $N$ given as item (2) above, then the quantity
$$
\tau_{I,M}(g) : = \lim_{n\to\infty}\frac{1}{n}I(g^{n}x,y)\\
$$
exists.
If $\tau_{I,M}(g),\tau_{I,M}(g^{-1})$ are all nonnegative and the translation length 
$\tau_{d_{I,M}}(g)$ of $g$ with respect to the distance $d_{I,M}$ 
is positive, then $\tau_{d_{I,M}}(g)=\tau_{I,M}(g)=\tau_{I,M}(g^{-1})$.
\end{thm}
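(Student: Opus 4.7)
The plan is to assemble this statement from the toolkit built up in the preceding subsections; essentially no new argument is required, and the work lies in matching the hypotheses of each prior result to the items in the summary. First, parts (1) and (2) of the compactification claim are the content of Theorem \ref{thm.lipi} applied, respectively, to $I:M\times N\to\R$ and to the ``transposed'' map $I^{T}:N\times M\to\R$, $(n,m)\mapsto I(m,n)$. The finiteness of $d_{I,M}^{\mathrm{sym}}$ and $d_{I,N}^{\mathrm{sym}}$ guarantees that both distances are well-defined via Theorem \ref{thm.dist}, while the hypothesis that $I$ separates points of both $M$ and $N$ is exactly what the two invocations of Theorem \ref{thm.lipi} require (condition (1) of Definition \ref{defi.sp} gives a distance in Theorem \ref{thm.dist} and condition (2) gives injectivity via \eqref{eq.inj} in the proof of Theorem \ref{thm.lipi}). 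Parts (3) and (4) are the classical Gromov horofunction embedding $z\mapsto d(\cdot,z)-d(b,z)$ applied to the (possibly asymmetric) metric spaces $(N,d_{I,N})$ and $(M,d_{I,M})$; injectivity is standard for a genuine distance, and continuity in the topology of pointwise convergence is the triangle inequality.

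Next, for the $G$-equivariance, I would directly invoke Theorem \ref{thm.grp-action}: invariance of $I$ under the diagonal action immediately gives invariance of $d_{I,M}$ and $d_{I,N}$, and the cocycle $g\cdot h(x)=h(g^{-1}x)-h(g^{-1}b)$ defines the extension of the action to each of the four horo-compactifications, with continuity supplied by Theorem \ref{thm.grp-action} applied to the four embeddings from items (1)--(4).

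For the translation length claims, the key observation is that Proposition \ref{prop.horo-trans} is set up precisely for the compactification $\overline{\mathcal{L}(N)}\subset\Lipi(M,d_{I,M}^{\mathrm{sym}})$ of item (1): the north-south dynamics hypothesis there is the hypothesis assumed here, so the proposition hands us the existence of the limit $\tau_{I,M}(g)$ (via the computation $\lim_{n\to\infty}\tfrac{1}{n}h(g^{-n}b_{M})=h_{+}(g^{-1}b_{M})$ together with Proposition \ref{prop.transI}). The equality $\tau_{d_{I,M}}(g)=\tau_{I,M}(g)=\tau_{I,M}(g^{-1})$ under $\tau_{d_{I,M}}(g)>0$ is then exactly case (1) of Theorem \ref{thm.ns-trans}, applied with the same $h_{+},h_{-}$; the nonnegativity of $\tau_{I,M}(g)$ and $\tau_{I,M}(g^{-1})$ assumed in the summary matches the nonnegativity hypothesis of Theorem \ref{thm.ns-trans} (one may take $\tau_{d_{I,M}}\geq 0$ for free since $\tau_{d_{I,M}}(g)>0$ is the regime considered).

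The only real care needed is bookkeeping: one must apply Theorem \ref{thm.lipi} once for each of the two embeddings in (1) and (2), choose the correct (symmetric versus asymmetric) distance in each target $\Lipi$, and not confuse the horo-compactification of $N$ inside $\Lipi(M,d_{I,M}^{\mathrm{sym}})$ used in the translation length discussion with the other three compactifications. I do not anticipate a genuine obstacle — the hard work has already been done in Theorems \ref{thm.dist}, \ref{thm.lipi}, \ref{thm.grp-action}, \ref{thm.ns-trans} and Propositions \ref{prop.transI}, \ref{prop.horo-trans}.
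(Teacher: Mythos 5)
Your proposal is correct and matches the paper exactly: Theorem \ref{thm.summary} is stated as a ``Summary of results'' with no separate proof, being precisely the assembly of Theorems \ref{thm.dist}, \ref{thm.lipi}, \ref{thm.grp-action}, Propositions \ref{prop.transI}, \ref{prop.horo-trans} and Theorem \ref{thm.ns-trans} that you describe. You even correctly read the north--south dynamics hypothesis as referring to the compactification $\overline{\mathcal{L}(N)}\subset\Lipi(M,d_{I,M}^{\mathrm{sym}})$ of item (1) (the statement's ``item (2)'' appears to be a slip, since item (2) compactifies $M$, not $N$).
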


\section{Teichm\"uller space and distances}\label{sec.Tdists}
\subsection{Known distances}
From now on, we consider an orientable closed surface $S$ of genus $\geq 2$.
The Teichm\"uller space $\T$ of $S$ is the space of marked hyperbolic (or Riemann) surfaces. The mapping class group of $S$ is denoted as $\mcg$.
Let $\scc$ denote the isotopy classes of essential simple closed curves on $S$.
We first recall some known results.
\begin{prop}\label{prop.Th-Tei}
We consider the map $I:\T\times \scc\to \R$ by $I(X,\alpha) = \ell_{X}(\alpha)$.
\begin{enumerate}
\item When $\ell_{X}(\alpha)$ is the {\em hyperbolic length} of the curve $\alpha$, then $d_{I,\T}(X,Y)$ coincides with the Thurston Lipschitz distance \cite{Thurston}.
\item When $\ell_{X}(\alpha)$ is the {\em extremal length} of the curve $\alpha$, then $d_{I,\T}(X,Y)$ coincides with the Teichm\"uller distance (Kerckhoff's formula \cite{Kerckhoff}).
\end{enumerate}
\end{prop}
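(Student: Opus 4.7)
The proof will be essentially a book-keeping exercise identifying the definition of $d_{I,\T}$ with the classical supremum-over-curves characterizations of the two distances; the substance is absorbed by the theorems of Thurston and Kerckhoff which are quoted, not reproved. The only subtlety is that for the equalities to hold one must interpret $I$ via the logarithm of the length (consistent with the Spectral Distances example earlier in the paper), so that ratios of lengths turn into differences; the plan is to proceed with $I(X,\alpha)=\log\ell_X(\alpha)$ and $I(X,\alpha)=\tfrac{1}{2}\log\ext_X(\alpha)$ respectively.

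For (1), the plan is to first recall Thurston's theorem from \cite{Thurston}, which asserts that the Lipschitz distance
$$
d_{\mathrm{Th}}(X,Y)=\log\inf_{f\simeq\mathrm{id}}\mathrm{Lip}(f)
$$
admits the alternative description
$$
d_{\mathrm{Th}}(X,Y)=\sup_{\alpha\in\scc}\log\frac{\ell_Y(\alpha)}{\ell_X(\alpha)}.
$$
Once this is in hand, I would simply substitute $I(X,\alpha)=\log\ell_X(\alpha)$ into the definition of $d_{I,\T}$ and observe that
$$
d_{I,\T}(X,Y)=\sup_{\alpha\in\scc}\bigl(\log\ell_X(\alpha)-\log\ell_Y(\alpha)\bigr)=\sup_{\alpha\in\scc}\log\frac{\ell_X(\alpha)}{\ell_Y(\alpha)},
$$
which coincides with $d_{\mathrm{Th}}(Y,X)$; the asymmetry convention is consistent with the formalism of Section \ref{sec.dist}, since $d_{I,\T}$ is introduced there as a possibly asymmetric distance.

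For (2), I would invoke Kerckhoff's formula \cite{Kerckhoff}, which states
$$
d_{\mathcal{T}}(X,Y)=\frac{1}{2}\log\sup_{\alpha\in\scc}\frac{\ext_Y(\alpha)}{\ext_X(\alpha)},
$$
and then, taking $I(X,\alpha)=\tfrac{1}{2}\log\ext_X(\alpha)$, note that
$$
d_{I,\T}(X,Y)=\sup_{\alpha\in\scc}\tfrac{1}{2}\bigl(\log\ext_X(\alpha)-\log\ext_Y(\alpha)\bigr)=\tfrac{1}{2}\log\sup_{\alpha\in\scc}\frac{\ext_X(\alpha)}{\ext_Y(\alpha)},
$$
which is Kerckhoff's expression (with the roles of $X$ and $Y$ swapped, reflecting the symmetry of $d_{\mathcal{T}}$).

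There is no genuine obstacle here: Thurston's and Kerckhoff's theorems do all of the nontrivial work, and what remains is purely formal manipulation of the definition of $d_{I,M}$ introduced in Section \ref{sec.dist}. The one point that deserves emphasis in the write-up is that separation of points of $\T$ for these two choices of $I$ follows from the fact that a marked hyperbolic (resp.\ conformal) structure is determined by its marked length spectrum on $\scc$, so that the hypotheses of Theorem \ref{thm.dist} are genuinely satisfied and $d_{I,\T}$ really is a distance.
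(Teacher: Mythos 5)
Your proposal is correct and matches the paper's treatment: the paper states this proposition as a recollection of known results, with the entire content carried by Thurston's characterization of the Lipschitz distance and Kerckhoff's formula, exactly as you do. Your observation that $I$ must implicitly be $\log\ell_X(\alpha)$ (respectively $\tfrac{1}{2}\log\ext_X(\alpha)$), in line with the Spectral Distances example, is a correct and worthwhile clarification of the statement as written.
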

Proposition \ref{prop.Th-Tei} serves as our motivation for introducing Theorem \ref{thm.summary}.
However, if we use $\scc$, we may not consider $d_{I,\scc}$ as they are infinity.
In the next section, we consider the space of geodesic currents to have functions from $\TT$ and Proposition \ref{prop.Th-Tei} should be compared with Proposition \ref{prop.hyp} and Proposition \ref{prop.E1} below.

\subsection{Volume Functions}\label{sec.volumes}
In the rest of this section, let us consider volume functions on the space of quasi-Fuchsian manifolds. 
A Kleinian group $\Gamma<\PSL, \Gamma\cong\pi_{1}(S)$ is said to be {\em quasi-Fuchsian} if its limit set is a Jordan curve. 
The Jordan curve splits the Riemann sphere $\hat{\mathbb{C}}$ at infinity of the hyperbolic 3-space $\HH^{3}$ into two simply connected regions.
Each component of the complement determines a complex structure, and hence a point in $\T$, after a suitable adjustment of orientation.
Thus, we obtain a point on $\TT$ from a quasi-Fuchsian group.
A quasi-Fuchsian manifold is the quotient $\HH^{3}/\Gamma$ of a quasi-Fuchsian group.

By the Bers simultaneous uniformization theorem \cite{Bers-simul}, 
it turns out that the space of quasi-Fuchsian manifolds is parametrized by $\TT$.

Quasi-Fuchsian manifolds themselves are of infinite volume.
However, there are two natural notions of volumes of quasi-Fuchsian manifolds.
\begin{defi}[Volumes of quasi-Fuchsian manifolds]\label{defi.vol-dist}
There are two functions from $\TT$ defined via certain volumes of quasi-Fuchsian manifolds:
\begin{enumerate}
\item Convex core volume: quasi-Fuchsian manifolds are known to be convex co-compact, and hence the convex hull of the limit sets (a.k.a convex core) is compact. The volume of the convex core is finite and hence defines a symmetric map 
$$
V_{C}:\TT\to\R_{\geq 0}.
$$
Let $d_{C}:\TT\to\R_{\geq 0}$ denote the distance obtained by Theorem \ref{thm.summary}
\item Renormalized volume: the definition of renormalized volume is a bit involved, we refer to \cite{KS} for the details. The renormalized volume of quasi-Fuchsian manifolds is known to be finite and non-negative (c.f. \cite{Masai} and references therein), and hence defines a symmetric map
$$
V_{R}:\TT\to\R_{\geq 0}.
$$
Let $d_{R}:\TT\to\R_{\geq 0}$ denote the distance obtained by Theorem \ref{thm.summary}
\end{enumerate}
\end{defi}

Given a pseudo-Anosov mapping class $\varphi\in\mcg$, 
the mapping torus 
$$
M(\varphi):=S\times [0,1]/((\varphi(x),0)\sim (x,1))
$$
becomes a complete hyperbolic $3$-manifold by the work of Thurston.

As the mapping class group $\mcg$ acts on $\T$, and associated diagonal action preserves $V_{C}, V_{R}$.
The action of $\mcg$ on $\TT$ also preserves $d_{C}$ and $d_{R}$ (see \S \ref{sec.action}).

If we have an isometric action, we may consider the translation length:
$$
\tau(\varphi) :=\lim_{n\to\infty}\frac{1}{n}d(\varphi^{n}x,x).
$$
Let $\tau_{C}(\varphi)$ and $\tau_{R}(\varphi)$ denote 
the translation length of $\varphi\in\mcg$ for $d_{C}$ and $d_{R}$ respectively.

In \cite{Masai}, it is proved that:
\begin{thm}\label{thm.pAvol}
Let $\varphi\in \mcg$ be pseudo-Anosov.
Then the translation length $\tau_{R}(\varphi)$ with respect to the distance $d_{R}$ coincides with the hyperbolic volume of the mapping torus $M(\varphi)$, in short
$$
\tau_{R}(\varphi) = \Vol(M(\varphi)).
$$
\end{thm}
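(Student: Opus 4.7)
The plan is to apply Theorem~\ref{thm.ns-trans} to $M=N=\T$ and $I=V_{R}$, and then separately identify the resulting limit with $\Vol(M(\varphi))$. First I would verify the hypotheses needed to invoke the general theorem: $V_{R}$ separates points of $\T$, because $V_{R}(X,X)=0$ exactly on the Fuchsian diagonal while $V_{R}>0$ off-diagonal (Krasnov--Schlenker); $d_{R}$ is finite, by Schlenker's inequality bounding $V_{R}$ by a constant multiple of Weil--Petersson distance; all relevant translation quantities are non-negative since $V_{R}\geq 0$; and $\tau_{R}(\varphi)>0$ for pseudo-Anosov $\varphi$ follows from Brock's comparison of $d_{WP}$ with convex-core volumes together with Schlenker's bound.

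The key dynamical input is that $\varphi$ acts with north-south dynamics on the horo-compactification $\overline{\mathcal{L}(\T)}\subset \Lipi(\T,d_{R})$. I would produce the attracting and repelling horofunctions as limits of $V_{R}(\cdot,\varphi^{\pm n}X)-V_{R}(b,\varphi^{\pm n}X)$, which geometrically correspond to degenerating one end of a quasi-Fuchsian manifold along the unstable or stable lamination of $\varphi$. Both existence of these limits and the attracting/repelling dichotomy should follow from the Brock--Bromberg geometric inflexibility theorem, which stabilizes the ``deep'' part of $\mathrm{QF}(X,\varphi^{n}Y)$ as $n\to\infty$ and makes the relevant differences converge in a way that is independent of $X$ modulo a global additive constant.

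Granted these two ingredients, Theorem~\ref{thm.ns-trans} yields $\tau_{R}(\varphi)=\lim_{n\to\infty}\tfrac{1}{n}V_{R}(\varphi^{n}X,Y)$, and the remaining task --- the main obstacle --- is to show this limit equals $\Vol(M(\varphi))$. The geometric picture is that as $n$ grows, the convex core of $\mathrm{QF}(\varphi^{n}X,Y)$ contains a long region modeled on $n$ near-isometric fundamental domains of the pseudo-Anosov mapping torus $M(\varphi)$, with boundary contributions of uniformly bounded size. To extract the exact linear rate $\Vol(M(\varphi))$, I would integrate the Krasnov--Schlenker variational formula $dV_{R}=\tfrac{1}{4}\mathrm{Re}\langle \mu,q\rangle$ along the Teichm\"uller axis of $\varphi$, using the fact that on the axis the holomorphic quadratic differentials asymptotically match the lifted pseudo-Anosov differential on $M(\varphi)$. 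Alternatively one could pass to the geometric limit of $\mathrm{QF}(\varphi^{n}X,\varphi^{-n}X)$, which ought to be the infinite cyclic cover of $M(\varphi)$, and compute the mean renormalized volume per fundamental domain. Either route requires careful control of boundary-defect terms so that $V_{R}(\varphi^{n}X,Y)-n\Vol(M(\varphi))=o(n)$, and this is the technical heart of the proof.
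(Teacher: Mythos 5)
The paper does not actually prove Theorem \ref{thm.pAvol}: it is quoted from \cite{Masai}, and the surrounding text explains that the proof there rests on a horo-compactification of $\T$ via $V_{R}$ together with the north--south-dynamics argument that the present paper abstracts as Proposition \ref{prop.horo-trans} and Theorem \ref{thm.ns-trans}. Your overall architecture --- verify the hypotheses of Theorem \ref{thm.ns-trans} for $I=V_{R}$, establish north--south dynamics of $\varphi$ on $\overline{\mathcal{L}(\T)}$, conclude $\tau_{d_{R}}(\varphi)=\lim_{n}\frac{1}{n}V_{R}(\varphi^{n}X,Y)$, and then identify that limit with $\Vol(M(\varphi))$ --- is therefore the same as that of the cited proof.

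As a proof, however, the proposal has genuine gaps. The two steps you defer (north--south dynamics on the horoboundary, and the identification of the linear growth rate of $V_{R}(\varphi^{n}X,Y)$ with $\Vol(M(\varphi))$) are precisely the content of \cite{Masai}; invoking geometric inflexibility and ``careful control of boundary-defect terms'' does not discharge them. Of your two suggested routes for the volume identification, integrating $dV_{R}=\tfrac14\mathrm{Re}\langle\mu,q\rangle$ along the Teichm\"uller axis is not viable as described: the integrand has no a priori relation to $\Vol(M(\varphi))$, and there is no telescoping that produces the mapping-torus volume without already knowing the asymptotics. The workable route is the geometric-limit one, but it should pass through Schlenker's uniform bound $|V_{R}-V_{C}|<A$ (as in Theorem \ref{thm.volcc}) and the strong convergence of $Q(\varphi^{-n}X,\varphi^{n}Y)$ to the infinite cyclic cover of $M(\varphi)$, using continuity of convex-core volume under strong limits; a ``mean renormalized volume per fundamental domain'' is not meaningful, since $V_{R}$ does not localize. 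Finally, your verification that $V_{R}$ separates points in the sense of Definition \ref{defi.sp} is a non sequitur: knowing that $V_{R}(X,Y)=0$ exactly on the diagonal gives neither condition (1) nor condition (2) of that definition, both of which concern the differences $V_{R}(X,Z)-V_{R}(Y,Z)$ over a third variable $Z$.
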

In \cite{Masai}, the author studies a variant of horo-compactification via the renormalized volume $V_{R}$, and the horo-compactification so obtained was the key ingredient of the proof of Theorem \ref{thm.pAvol}.

We now prove a similar result for the convex core volume version.
\begin{thm}\label{thm.volcc}
There exists a constant $A$ which depends only on the topology of $S$ such that
\[
|d_{R}(X,Y)-d_{C}(X,Y)|<A
\]
for any $X,Y\in\T$. In particular, for any pseudo-Anosov $\varphi\in \mcg$, we have
\[
\tau_{C}(\varphi) = \Vol(M(\varphi)).
\]
\end{thm}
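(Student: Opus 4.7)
The plan is to reduce Theorem \ref{thm.volcc} to a known pointwise bound between $V_R$ and $V_C$ on quasi-Fuchsian manifolds, and then simply take the supremum and divide by $n$.

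The key input is the Krasnov--Schlenker identity relating the two volumes: for a quasi-Fuchsian manifold with conformal boundary $(X,Y)\in\TT$,
\[
V_R(X,Y) \;=\; V_C(X,Y)\;-\;\tfrac{1}{2}L_{\beta}(X,Y),
\]
where $L_{\beta}$ denotes the hyperbolic length of the bending measured lamination on $\partial \mathrm{CC}(X,Y)$. Combining this with Bridgeman's upper bound on the bending length of the convex core boundary (which, up to the usual constants, is at most $\ConT$), one gets a pointwise estimate
\[
\bigl|V_R(X,Y)-V_C(X,Y)\bigr| \;\leq\; B
\]
for a constant $B=B(S)$ depending only on the topology of $S$. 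This is a standard fact from the literature on convex cores and renormalized volume, and I would simply cite it rather than reprove it; this is also where the only real analytical content sits.

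Given this uniform bound, the distance comparison is immediate from the definition of $d_R$ and $d_C$ via Theorem \ref{thm.dist}. For any $X,Y,Z\in\T$,
\[
V_R(X,Z)-V_R(Y,Z) \;\leq\; V_C(X,Z)-V_C(Y,Z) + 2B,
\]
so taking the supremum over $Z\in\T$ yields $d_R(X,Y)\leq d_C(X,Y)+2B$, and the reverse inequality follows by symmetry. Setting $A:=2B$ gives the first assertion of the theorem.

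For the translation length statement, apply the first assertion to $X$ and $\varphi^n X$: the quantity $|d_R(\varphi^n X,X)-d_C(\varphi^n X,X)|$ is bounded by $A$ independently of $n$. Dividing by $n$ and letting $n\to\infty$ shows $\tau_R(\varphi)=\tau_C(\varphi)$. Theorem \ref{thm.pAvol} identifies $\tau_R(\varphi)$ with $\Vol(M(\varphi))$, completing the proof. The main (and essentially only) obstacle is locating the sharpest bound on $|V_R-V_C|$ with an explicit dependence on $\chi(S)$; the rest is a formal consequence of the $\sup$-based construction of $d_R$ and $d_C$.
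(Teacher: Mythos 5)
Your proposal is correct and follows essentially the same route as the paper: the paper likewise reduces everything to Schlenker's uniform bound $|V_R(X,Y)-V_C(X,Y)|<A'$ (depending only on the topology of $S$), deduces $|d_R-d_C|<2A'$ from the $\sup$-definition of the two distances, and concludes $\tau_C(\varphi)=\tau_R(\varphi)=\Vol(M(\varphi))$ via Theorem \ref{thm.pAvol}. Your extra remarks tracing the pointwise bound back to the Krasnov--Schlenker identity and Bridgeman's bending-length estimate are a finer-grained justification of the same cited input, not a different argument.
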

\begin{proof}
By the result of Schlenker \cite{Schlenker}, 
it is observed that there is a constant $A'$ such that $|V_{R}(X,Y)-V_{C}(X,Y)|<A'$ for any $X,Y\in\T$.
By definition of $d_{R}$ and $d_{C}$, we have $|d_{R}(X,Y)-d_{C}(X,Y)|<2A'$.
This implies that $\tau_{R}(\varphi) = \tau_{C}(\varphi)$.
Hence by Theorem \ref{thm.pAvol}, we have
$$
\tau_{C}(\varphi) = \Vol(M(\varphi)).
$$
\end{proof}
\section{Geodesic currents and length functions}\label{sec.curr}
\subsection{Intersection number and hyperbolic length}
In this section, we recall the geodesic currents introduced by Bonahon \cite{Bonahon}.
Let $S$ be an orientable closed surface $S$ of genus $\geq 2$.
We equip $S$ with a reference hyperbolic structure.
Then $S$ is realized as a quotient $\HH/\Gamma$ where $\Gamma\cong\pi_{1}(S)$ is a Fuchsian group.
We define the space of (unoriented) geodesics by 
$$
\mathcal{G}:=(\partial\HH\times\partial\HH)/(\Z/2\Z)
$$
where the action of $(\Z/2\Z)$ switches the coordinates.
\begin{defi}[Geodesic currents]
A {\em geodesic current} is a positive, locally finite, $\Gamma$-invariant Radon measure on $\mathcal{G}$.
We denote by $\curr$ the space of geodesic currents equipped with weak$^{\ast}$ topology.
The quotient via multiplicative action of $\R_{>0}$ is denoted $\mathbb{P}\curr$.
\end{defi}
Note that $\curr$ is known to be independent of the reference hyperbolic structure.

\begin{ex}
We give some examples of geodesic currents.
\begin{enumerate}
\item Given a closed curve  $\gamma$  on  S , the union of the endpoints  $\partial \widetilde{\gamma}$  of all of its lifts  $\widetilde{\gamma} \subset \mathbb{H} $ forms a locally finite, $\Gamma$-invariant subset of  $\mathcal{G}$. Consequently, the sum of Dirac measures on $\partial\widetilde\gamma$  provides an example of a geodesic current.
By abuse of notations, we denote the geodesic current by the same symbol $\gamma$.
\item For each hyperbolic structure \( X \) on \( S \), the volume form on the unit tangent bundle induces a geodesic current, known as the \emph{Liouville current}, which we denote by \( L_X \), see \cite{Bonahon} for more detail.
\end{enumerate}
\end{ex}

\bigskip
{\bf Notation.}
Let \( X \in \mathcal{T} \). By a slight abuse of notation, we denote by \( X \in \curr \) the corresponding Liouville current \( L_{X} \) whenever no confusion arises.

\bigskip

Recall that given two closed curves \( \gamma, \delta \) on the surface \( S \), we have the geometric intersection number \( i(\gamma, \delta) \).  
Let \( \mathcal{T} \) denote the Teichm\"uller space of \( S \), which is the space of marked hyperbolic structures.
\begin{thm}[Bonahon \cite{Bonahon}]\label{thm.Bonahon}
The following statements hold.
\begin{enumerate}
    \item The set of weighted closed curves is dense in \( \curr \).
    \item The geometric intersection number of closed curves extends continuously to \( \curr \).
    \item The map \( T \ni X \mapsto L_{X} \in \mathbb{P}\curr \) is an embedding and its closure coincides with the Thurston compactification.
    \item For a closed curve \( \gamma \) and a Liouville current \( L_{X} \), we have  
          \[
          i(\gamma, L_{X}) = \ell_{X}(\gamma),
          \]  
          where \( \ell_{X}(\gamma) \) is the hyperbolic length of \( \gamma \).
   \item For any $X$, we have $i(L_{X},L_{X}) = \pi^{2}|\chi(S)|$ where $\chi(S)$ is the Euler characteristic of $S$.
\end{enumerate}
\end{thm}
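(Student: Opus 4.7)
The plan is to address the five items separately. Items (1), (4), and (5) are relatively direct, while items (2) and (3) carry the main technical content; I would do them in the order (4), (1), (2), (3), (5) so that each can build on what is available. For (4), the argument is a Crofton-type computation: up to a universal constant, the Liouville measure of the set of (lifts of) geodesics crossing a short transverse arc equals the hyperbolic length of that arc; integrating along $\gamma$ and dividing by $2$ for orientation yields $\ell_{X}(\gamma)$. For (1), I would fix a Borel fundamental domain $D\subset\mathcal{G}$ for the $\Gamma$-action, approximate the restriction $\mu|_{D}$ by a finite weighted sum of Dirac masses $\sum c_i\delta_{g_i}$, and then perturb each $g_i$ within its $\Gamma$-orbit to the axis of a hyperbolic element of $\Gamma$, which projects to a closed geodesic on $S$; the $\Gamma$-invariant extension of this sum is then a weighted closed curve approximating $\mu$ in the weak-$*$ topology, using that axes are dense in $\mathcal{G}$.

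For (2), the plan is to represent the intersection number as a product measure. Let $D_{S}\subset(\mathcal{G}\times\mathcal{G})/\Gamma$ be the set of pairs of distinct transversely intersecting geodesics, equipped with the $\Gamma$-quotient structure (using a measurable fundamental domain). Define $i(\mu,\nu):=(\mu\otimes\nu)(D_{S})$. The key point is that for closed geodesics this reduces to counting transverse intersection points, and that the boundary $\partial D_{S}$ consists of pairs sharing an endpoint at infinity, which has $\mu\otimes\nu$-measure zero for any two geodesic currents (since any single current places no mass on a single point of $\mathcal{G}$, because $\Gamma$-invariance and local finiteness would force infinite mass on the orbit). Weak-$*$ convergence of $\mu_n\to\mu$ and $\nu_n\to\nu$ then gives $(\mu_n\otimes\nu_n)(D_{S})\to(\mu\otimes\nu)(D_{S})$, since $D_{S}$ has negligible boundary. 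This zero-boundary verification is the main obstacle: it requires a careful analysis of the shear behavior at shared endpoints.

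For (3), injectivity of $X\mapsto L_{X}$ follows from (4): the simple marked length spectrum determines $X$, and it is recovered from $L_{X}$ by $\ell_{X}(\gamma)=i(\gamma,L_{X})$. Continuity is immediate from weak-$*$ compactness and the fact that $\ell_{X}(\gamma)$ varies continuously in $X$. To identify the closure in $\mathbb{P}\curr$ with the Thurston compactification, I would show that any divergent sequence $X_n\in\T$ yields, after rescaling, a limit current $\mu_\infty$ with $i(\mu_\infty,\mu_\infty)=0$: indeed $i(L_{X_n},L_{X_n})=\pi^{2}|\chi(S)|$ is bounded by (5), so dividing by a divergent scalar kills self-intersection in the limit. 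By Bonahon's characterization, zero self-intersection currents are exactly measured laminations, recovering $\pmf$ as the boundary; comparison with Thurston's original definition via projective simple-length functionals then matches the two topologies. Finally, item (5) is a direct Crofton computation: the Liouville pairing equals $(\pi/2)\cdot\mathrm{Area}(X)$, and Gauss-Bonnet gives $\mathrm{Area}(X)=2\pi|\chi(S)|$, yielding $\pi^{2}|\chi(S)|$. The genuinely hard step is the boundary identification in (3); everything else reduces to measure-theoretic bookkeeping or standard hyperbolic geometry.
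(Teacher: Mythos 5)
This theorem is stated in the paper as a citation of Bonahon \cite{Bonahon}; the paper gives no proof of it, so there is no internal argument to compare yours against. Judged on its own, your outline reconstructs the standard (Bonahon) route for each item: the Crofton formula for (4) and (5) (and your constant $\tfrac{\pi}{2}\cdot\mathrm{Area}(X)=\tfrac{\pi}{2}\cdot 2\pi|\chi(S)|=\pi^{2}|\chi(S)|$ is consistent with Gauss--Bonnet), the product-measure definition $i(\mu,\nu)=(\mu\otimes\nu)(D_{S})$ for (2), and the ``rescale so that self-intersection dies'' argument for (3). The overall architecture is sound and is essentially the original proof strategy.

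Two steps, however, are not correct as written. In (1), the intermediate object ``the $\Gamma$-invariant extension of $\sum c_{i}\delta_{g_{i}}$ for arbitrary $g_{i}$ in a fundamental domain'' is in general \emph{not} a geodesic current: if $g$ is not the axis of a hyperbolic element, its orbit $\Gamma g$ is typically dense in $\mathcal{G}$ (the projected geodesic is dense in $S$), so the orbit sum of Dirac masses fails to be locally finite. You must pass to axes \emph{before} invariant extension, and the approximation has to be checked directly against test functions (Bonahon does this by subdividing $\mathcal{G}$ into small boxes and using density of axes). In (2), the justification offered for the zero-boundary claim --- ``any current places no mass on a single point of $\mathcal{G}$ because invariance would force infinite orbit mass'' --- is false: the current of a closed curve is purely atomic, with a locally finite discrete orbit of atoms. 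The statement actually needed is that $\mu\otimes\nu$ gives zero mass to the set of pairs of geodesics sharing an endpoint on $\partial\HH$ (for closed curves this follows from discreteness of $\Gamma$: two distinct axes cannot share exactly one fixed point; for general currents it is a separate measure-theoretic argument), together with relative compactness of $D_{S}$ in $(\mathcal{G}\times\mathcal{G})/\Gamma$ so that Portmanteau applies. Finally, in (3) you establish that projective limits of $L_{X_{n}}$ have vanishing self-intersection, hence are measured laminations, but you should also record why the boundary map is a bijection onto $\pmf$ (a measured lamination is determined by, and every point of $\pmf$ is realized through, its intersection numbers with simple closed curves) before concluding that the closure \emph{is} the Thurston compactification.
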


\subsection{Extremal length}
Let $X\in\T$ be a Riemann surface and \(\gamma \) be a closed curve.
The {\em extremal length} of \(\gamma \) is defined as

\[
\ext_{X}(\gamma) := \sup_{\rho} \frac{\ell_{\rho}(\gamma)^{2}}{\operatorname{Area}(\rho)}
\]

where
\begin{itemize}
\item the supremum is taken over all conformal metrics \( \rho(z) \) on $X$.
\item  $L_{\rho}(\gamma):=\int_{\gamma}\rho|dz|$ is the $\rho$-length of a path $\gamma$, and $\ell_{\rho}(\gamma) = \inf_{\gamma'}L_{\rho}(\gamma')$ where the infimum is taken over all $\gamma'$ homotopic to $\gamma$.

\item \( \mathrm{Area}(\rho) := \int_{X}\rho^{2}dxdy \) is the area with respect to $\rho$.
\end{itemize}
Any conformal metric that attains the extremal length is called an {\em extremal metric}.
By the work of Jenkins and Strebel (see e.g. \cite{GL}), 
it is known that for any simple closed curve $\alpha$, there exists a quadratic differential $q = q(X,\alpha)$ on $X$ whose associated singular Euclidean metric is the extremal metric of $\alpha$.

By the work of Mart\'inez--Granado and Thurston, we have the extremal lengths of geodesic currents.
\begin{thm}[{\cite[Section 4.8]{GT}}]\label{thm.GT}
Let $X\in\T$.
Then the square root of the extremal length function $\sqrt{\ext_{X}}$ extends continuously to $\curr$, namely, we have a continuous function
$$
\sqrt{\ext_{X}}:\curr\to\R_{\geq 0}.
$$
\end{thm}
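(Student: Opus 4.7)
The plan is to combine Kerckhoff's variational formula for extremal length with Bonahon's continuous extension of intersection numbers to $\curr$ (Theorem~\ref{thm.Bonahon}) and the density of weighted closed curves. Since $\ext_{X}(c\gamma)=c^{2}\ext_{X}(\gamma)$ for scalars $c>0$, the function $\sqrt{\ext_{X}}$ is homogeneous of degree one and is already well-defined on the cone of weighted closed curves, which is dense in $\curr$ by Theorem~\ref{thm.Bonahon}(1). The task is therefore to produce a continuous extension that agrees with $\sqrt{\ext_X}$ on this dense set.

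The candidate extension comes from Kerckhoff's identity $\sqrt{\ext_{X}(\alpha)}=\sup_{\beta\in\scc} i(\alpha,\beta)/\sqrt{\ext_{X}(\beta)}$ for simple closed curves $\alpha$. I would define
\begin{equation*}
F_{X}(\mu):=\sup_{\beta\in\scc}\frac{i(\mu,\beta)}{\sqrt{\ext_{X}(\beta)}},\qquad \mu\in\curr.
\end{equation*}
Each map $\mu\mapsto i(\mu,\beta)/\sqrt{\ext_{X}(\beta)}$ is continuous on $\curr$ by Theorem~\ref{thm.Bonahon}(2), so $F_{X}$ is automatically lower semicontinuous as a supremum of continuous functions, and by Kerckhoff's identity together with degree-one homogeneity it agrees with $\sqrt{\ext_{X}}$ on every weighted simple closed curve.

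The main obstacle is upper semicontinuity of $F_{X}$. The geometric tool is the Jenkins--Strebel quadratic differential $q(X,\beta)$: its horizontal measured foliation realizes the sup in Kerckhoff's formula and its area equals $\ext_{X}(\beta)$. For currents supported on measured foliations, $F_{X}(\mu)$ admits an interpretation as the area of a limiting quadratic differential, and upper semicontinuity follows from compactness in $\mf$. The genuinely difficult case is a current with positive self-intersection, for example a Liouville current $L_{Y}$ itself, for which no single Jenkins--Strebel differential applies. This is precisely the content of the construction in \cite[Section~4.8]{GT}: a refined approximation scheme by weighted multicurves, together with uniformity coming from compactness in $\mathbb{P}\curr$ and continuity of intersection numbers, is needed to promote lower semicontinuity to continuity and thereby obtain a continuous extension $\sqrt{\ext_{X}}:\curr\to\R_{\geq 0}$.
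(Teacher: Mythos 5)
The paper does not prove this statement at all --- it imports it wholesale from \cite[Section 4.8]{GT} --- so the only meaningful comparison is between your sketch and the actual construction there, and your sketch has a fatal flaw at its foundation. Your candidate extension $F_{X}(\mu)=\sup_{\beta\in\scc} i(\mu,\beta)/\sqrt{\ext_{X}(\beta)}$ agrees with $\sqrt{\ext_{X}}$ only on weighted \emph{simple} closed curves (via the Gardiner--Masur duality; this identity is due to Gardiner--Masur, not Kerckhoff, whose formula concerns the Teichm\"uller distance). But weighted simple closed curves are dense only in $\mf$, not in $\curr$: the dense subset of $\curr$ provided by Theorem \ref{thm.Bonahon}(1) consists of \emph{all} weighted closed curves, including non-simple ones, and there $F_{X}$ and $\sqrt{\ext_{X}}$ genuinely disagree. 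A concrete witness is the Liouville current $L_{X}$ itself: taking the hyperbolic metric as a competitor in the definition of extremal length and using Gauss--Bonnet gives $\ext_{X}(\beta)\geq \ell_{X}(\beta)^{2}/(2\pi|\chi(S)|)$ for every $\beta\in\scc$, and since $i(L_{X},\beta)=\ell_{X}(\beta)$ by Theorem \ref{thm.Bonahon}(4), this forces $F_{X}(L_{X})\leq\sqrt{2\pi|\chi(S)|}$, whereas $\sqrt{\ext_{X}(L_{X})}=\sqrt{\pi^{3}|\chi(S)|/2}$ by Theorem \ref{thm.ExHyp}, and $\pi^{3}/2>2\pi$. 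So $F_{X}$ is strictly smaller than $\sqrt{\ext_{X}}$ on currents with positive self-intersection; no semicontinuity argument can repair this, because you are continuously extending the wrong function. (What the extended Minsky inequality, Lemma \ref{lem.Minsky}, actually gives is only $F_{X}\leq\sqrt{\ext_{X}}$ on all of $\curr$, with equality on $\mf$.)

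Beyond that, the one step you identify as genuinely hard --- handling currents with self-intersection --- is explicitly deferred back to \cite[Section 4.8]{GT}, so the proposal is circular precisely where the content lies. The machinery of \cite{GT} is quite different from simple-curve duality: they axiomatize properties of functionals on closed curves (homogeneity, quasi-smoothing under resolving an essential crossing, a convex-union property, and a stability estimate), prove that any functional satisfying these axioms extends continuously to $\curr$, and then verify the axioms for $\sqrt{\ext_{X}}$. A self-contained proof would need to reproduce that argument rather than the duality formula.
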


For later use, let us extend Minsky's inequality.
\begin{lem}[Minsky's inequality c.f. {\cite[Lemma 5.1]{Minsky}}]\label{lem.Minsky}
Let $\alpha$ be a measured foliation, and $G\in\curr$.
Then for any $Z\in\T$, we have
\begin{equation}
i(\alpha,G)\leq \sqrt{\ext_{Z}(\alpha)}\cdot\sqrt{\ext_{Z}(G)}\label{eq.Minsky}
\end{equation}
\end{lem}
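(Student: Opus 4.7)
The plan is a density-and-continuity argument that uses the continuous extensions of $i(\cdot,\cdot)$ and $\sqrt{\ext_{Z}(\cdot)}$ to $\curr$: verify (\ref{eq.Minsky}) on the dense subset of weighted closed curves, then propagate to all of $\curr$. By bilinearity of the intersection number and the homogeneity $\sqrt{\ext_{Z}(c\gamma)} = c\sqrt{\ext_{Z}(\gamma)}$, the weighted case reduces to proving the inequality when $G = \gamma$ is an unweighted closed curve, possibly non-simple.

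The main input for the closed-curve case is the Jenkins--Strebel/Gardiner--Masur quadratic differential $q = q(Z,\alpha)$ on $Z$ whose horizontal measured foliation equals $\alpha$. The associated singular flat metric $|q|^{1/2}$ is extremal for $\alpha$, with $\mathrm{Area}(|q|^{1/2}) = \ext_{Z}(\alpha)$. In the natural coordinate $w = x + iy$ of $q$, the transverse measure of $\alpha$ equals $|dy|$, and pointwise $|dy| \leq |dw|$. Hence, for any smooth representative $\gamma' \simeq \gamma$,
$$
\int_{\gamma'} |dy| \;\leq\; \int_{\gamma'} |dw| \;=\; \ell_{|q|^{1/2}}(\gamma'),
$$
and taking the infimum over $\gamma'$ yields $i(\alpha,\gamma) \leq \ell_{|q|^{1/2}}(\gamma)$. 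Using $|q|^{1/2}$ as a test metric in the definition of $\ext_{Z}(\gamma)$ then gives
$$
\ell_{|q|^{1/2}}(\gamma)^{2} \;\leq\; \ext_{Z}(\gamma)\cdot\mathrm{Area}(|q|^{1/2}) \;=\; \ext_{Z}(\gamma)\cdot\ext_{Z}(\alpha),
$$
and combining these two bounds proves (\ref{eq.Minsky}) for closed curves.

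Both sides of (\ref{eq.Minsky}) are continuous in $G \in \curr$: the left-hand side by Theorem \ref{thm.Bonahon}(2), and the right-hand side by Theorem \ref{thm.GT}. Since weighted closed curves are dense in $\curr$ by Theorem \ref{thm.Bonahon}(1), the inequality for closed curves extends to all of $\curr$. The delicate point I expect to require care is the identification $i(\alpha,\gamma) = \inf_{\gamma'\simeq\gamma} \int_{\gamma'} |dy|$ when $\alpha$ has non-closed leaves and $\gamma$ is non-simple: this is a standard fact about transverse measures of measured foliations but needs attention near the zeros of $q$. One could alternatively sidestep this by first proving (\ref{eq.Minsky}) for weighted simple multicurves, where the classical Minsky inequality is directly available, and then invoking continuity both in $\alpha$ (approximated in $\mathcal{MF}$) and in $G$ (approximated in $\curr$); but the direct proof above is more economical and has the advantage of giving a conceptual explanation via the extremal metric.
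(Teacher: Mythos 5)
Your proof is correct and follows essentially the same strategy as the paper's: verify the inequality for closed curves $G$ using the flat metric of a quadratic differential associated to $\alpha$ as a test metric in the definition of $\ext_{Z}(G)$, then extend to all of $\curr$ by continuity of both sides (Theorems \ref{thm.Bonahon} and \ref{thm.GT}) and density of weighted closed curves. The one genuine difference is in how $\alpha$ is handled: the paper first approximates $\alpha$ by weighted \emph{simple} closed curves in $\mf$, so that the Jenkins--Strebel differential gives a single flat cylinder and the bound $\ell_{q}(G)\geq i(\alpha,G)/\sqrt{\ext_{Z}(\alpha)}$ follows from counting crossings of that cylinder; you instead keep $\alpha$ a general measured foliation, invoke the Hubbard--Masur differential with horizontal foliation $\alpha$, and use the pointwise bound $|dy|\leq|dw|$ together with the identity $i(\alpha,\gamma)=\inf_{\gamma'}\int_{\gamma'}|dy|$. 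Your route avoids a second limiting argument in the $\alpha$-variable at the cost of the (standard but nontrivial) facts that the Hubbard--Masur differential exists, that its area equals $\ext_{Z}(\alpha)$ for the extended extremal length, and that the transverse-measure characterization of $i(\alpha,\gamma)$ holds for non-simple $\gamma$ --- exactly the delicate point you flag; the fallback you describe (simple multicurves plus continuity in both variables) is precisely the paper's argument.
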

\begin{proof}
The proof follows almost identically to Minsky's original proof of the inequality when \( \alpha \) and \( G \) are simple closed curves. Since weighted simple closed curves are dense in the space of measured foliations (see e.g. \cite[Proposition 6.18]{FLP}), we may assume that \( \alpha \) is a simple closed curve.  
Notice that both sides of \eqref{eq.Minsky} are homogeneous with respect to the weight on $\alpha$.  

There exists a holomorphic quadratic differential, called the Jenkins-Strebel differential \( q := q_{Z}(\alpha) \) on \( Z \), such that the associated singular Euclidean metric is the extremal metric for \( \alpha \) on \( Z \).  

The horizontal foliation of \( q \) is homotopic to \( \alpha \), and the horizontal and vertical foliations of \( q \) form a flat cylinder \( C \) of height \( 1 / \sqrt{\operatorname{Ext}_{Z}(\alpha)} \) and circumference \( \sqrt{\operatorname{Ext}_{Z}(\alpha)} \).  

Now, suppose that \( G \) is a closed curve. The length \( \ell_{q}(G) \) of \( G \) satisfies  
\[
\ell_{q}(G) \geq i(\alpha, G) \cdot 1/\sqrt{\operatorname{Ext}_{Z}(\alpha)}
\]
since \( G \) passes through the cylinder \( C \) exactly \( i(\alpha, G) \) times.  

Then, by the definition of extremal length, we obtain  
\[
\operatorname{Ext}_{Z}(G) \geq \ell_{q}(G)^{2} \geq \frac{i(\alpha, G)^{2}}{\operatorname{Ext}_{Z}(\alpha)}
\]
which implies
\[
i(\alpha, G) \leq \sqrt{\operatorname{Ext}_{Z}(\alpha)} \cdot \sqrt{\operatorname{Ext}_{Z}(G)}.
\]
Since weighted closed curves are dense in \( \curr \), we conclude that \eqref{eq.Minsky} holds for any \( G \in \curr \).
\end{proof}

We also note the following:
\begin{thm}[\cite{ExHyp}]\label{thm.ExHyp}
Let $X\in\T$ and $L_{X}$ denote the corresponding Liouville current.
Then the extremal length of $L_{X}$ is determined by the topology of $X$.
\[
\ext_{X}(L_X) =\frac{\pi}{2}~i(L_{X},L_{X}) =  \frac{~\pi^{2}}{4} \operatorname{Area}(X) = \frac{~\pi^{3}}{2}|\chi(S)|.
\]
\end{thm}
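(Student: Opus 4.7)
My plan is to exhibit the hyperbolic metric $\rho_X$ itself as the extremal metric realising $\ext_X(L_X)$. First observe that the three equal quantities in the statement are tied together by facts already at hand: Gauss--Bonnet gives $\operatorname{Area}(X) = 2\pi|\chi(S)|$, while Theorem~\ref{thm.Bonahon}(5) gives $i(L_X,L_X) = \pi^{2}|\chi(S)|$, so the chain $\tfrac{\pi}{2}i(L_X,L_X) = \tfrac{\pi^{2}}{4}\operatorname{Area}(X) = \tfrac{\pi^{3}}{2}|\chi(S)|$ is automatic. The content to prove is therefore only the first equality $\ext_X(L_X) = \tfrac{\pi^{3}}{2}|\chi(S)|$.

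The central step I would carry out is a Crofton/Santal\'o-type identity: for any conformal metric $\rho = \rho(z)|dz|$ on $X$, with hyperbolic density $\rho_X$,
\[
\ell_{\rho}(L_X) \;=\; \frac{\pi}{2}\int_{X}\rho(z)\,\rho_X(z)\,dxdy.
\]
I would derive this by lifting to the unit tangent bundle $T^{1}X$, where $L_X$ is identified (modulo the geodesic flow) with Liouville measure, and noting that along each geodesic $\gamma$ one has $\ell_{\rho}(\gamma) = \int_{\gamma}(\rho/\rho_X)\,ds_X$ whenever $s_X$ is hyperbolic arc length; Fubini combined with flow-invariance then converts the double integral over $(\gamma,t)$ into $\int_X(\rho/\rho_X)\,dA_X = \int_X \rho\rho_X\,dxdy$. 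The normalising constant $\pi/2$ is forced by specialising to $\rho = \rho_X$: the left-hand side becomes $\ell_{\rho_X}(L_X) = i(L_X, L_X) = \pi^{2}|\chi(S)|$ by Theorem~\ref{thm.Bonahon}(4)--(5), while the right-hand side becomes $\tfrac{\pi}{2}\operatorname{Area}(X) = \pi^{2}|\chi(S)|$, and the two agree. I would also check that the $\ell_\rho(L_X)$ produced here matches the value entering the variational definition of $\ext_X(L_X)$ via the extension of $\sqrt{\ext_X}$ to $\curr$ (Theorem~\ref{thm.GT}), by approximating $L_X$ with weighted simple closed curves (Theorem~\ref{thm.Bonahon}(1)) and invoking continuity.

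With the integral formula in hand, Cauchy--Schwarz immediately gives
\[
\ell_{\rho}(L_X) \;\le\; \frac{\pi}{2}\Bigl(\int_X \rho^{2}\,dxdy\Bigr)^{1/2}\Bigl(\int_X \rho_X^{2}\,dxdy\Bigr)^{1/2} \;=\; \frac{\pi}{2}\sqrt{\operatorname{Area}(\rho)}\sqrt{2\pi|\chi(S)|},
\]
so that $\ell_{\rho}(L_X)^{2}/\operatorname{Area}(\rho) \le \tfrac{\pi^{3}}{2}|\chi(S)|$, with equality precisely when $\rho$ is a positive constant multiple of $\rho_X$. Taking the supremum over $\rho$ in the definition of extremal length then yields $\ext_X(L_X) = \tfrac{\pi^{3}}{2}|\chi(S)|$, with $\rho_X$ itself attaining the supremum.

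The main obstacle I anticipate is the first step: making the Crofton identity $\ell_{\rho}(L_X) = \tfrac{\pi}{2}\int \rho\rho_X\,dxdy$ fully rigorous for an arbitrary conformal $\rho$, and reconciling this ``pointwise'' length of a current with the limit-of-weighted-curves extension underlying $\sqext$ on $\curr$. Once that reconciliation is in place, both the upper bound and its attainment reduce to the one-line Cauchy--Schwarz argument above, and the numerical identifications with $i(L_X,L_X)$ and $\operatorname{Area}(X)$ are immediate from Gauss--Bonnet and Theorem~\ref{thm.Bonahon}(5).
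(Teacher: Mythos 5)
The paper does not actually prove this statement: Theorem \ref{thm.ExHyp} is imported wholesale from \cite{ExHyp}, so there is no in-paper argument to measure yours against. Judged on its own terms, your skeleton is the natural one: a Crofton-type identity relating the $\rho$-length of the Liouville current to $\int_X\rho\,\rho_X\,dxdy$, Cauchy--Schwarz for the upper bound, and the hyperbolic metric as the optimal competitor. Your normalizing constant $\pi/2$ is correct (it is forced by $i(L_X,L_X)=\pi^{2}|\chi(S)|=\tfrac{\pi}{2}\operatorname{Area}(X)$), the chain of numerical identities is indeed immediate from Gauss--Bonnet and Theorem \ref{thm.Bonahon}(5), and the lower bound $\ext_X(L_X)\ge\tfrac{\pi^{3}}{2}|\chi(S)|$ is complete as you sketch it: for weighted curves $c_n\gamma_n\to L_X$ one has $c_n^{2}\ext_X(\gamma_n)\ge c_n^{2}\,\ell_X(\gamma_n)^{2}/\operatorname{Area}(X)\to i(L_X,L_X)^{2}/\operatorname{Area}(X)=\tfrac{\pi^{3}}{2}|\chi(S)|$, using only Theorem \ref{thm.Bonahon}(2),(4),(5) and the continuity in Theorem \ref{thm.GT}. (A small correction: since $\ell_\rho$ minimizes over the homotopy class while the Crofton integral uses hyperbolic geodesic representatives, your identity should really be the inequality $\ell_\rho(L_X)\le\tfrac{\pi}{2}\int_X\rho\,\rho_X\,dxdy$, with equality at $\rho=\rho_X$; that is all the argument needs.)

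The upper bound, however, contains a genuine gap which you flag but then wave away as a formality. The quantity $\ext_X(L_X)$ supplied by Theorem \ref{thm.GT} is by definition $\lim_n\sup_\rho c_n^{2}\ell_\rho(\gamma_n)^{2}/\operatorname{Area}(\rho)$, whereas Crofton plus Cauchy--Schwarz controls $\sup_\rho\lim_n$; the needed inequality $\limsup_n\sup_\rho\le\sup_\rho\limsup_n$ is exactly the one that does not hold in general. The functional sending a current $G$ to $\sup_\rho\bigl(\int_{T^{1}X}(\rho/\rho_X)\,d\widehat{G}\bigr)^{2}/\operatorname{Area}(\rho)$ (with $\widehat{G}$ the lift of $G$ to the unit tangent bundle) is a supremum of continuous functionals, hence only lower semicontinuous on $\curr$ --- an inequality pointing the wrong way for an upper bound. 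Concretely: the Crofton identity is a statement about the $L_X$-average of lengths, not about any individual curve, so it gives no control of $\ext_X(\gamma_n)$ at finite $n$ (the near-extremal Jenkins--Strebel metrics $\rho_n$ for $\gamma_n$ vary with $n$ and are not equicontinuous, so weak-$*$ convergence $c_n\gamma_n\to L_X$ does not yield $c_nL_{\rho_n}(\gamma_n)\to\tfrac{\pi}{2}\int_X\rho_n\rho_X\,dxdy$). Closing this requires either an explicit description of the Mart\'inez-Granado--Thurston extension that is compatible with the variational formula for extremal length, or a quantitative equidistribution argument giving uniform control over the extremal metrics of the approximating curves. As written, only $\ext_X(L_X)\ge\tfrac{\pi^{3}}{2}|\chi(S)|$ is established.
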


\section{Distances via length functions}\label{sec.dist-len}
Via Liouville currents, we have an embedding of $\T$ into $\curr$.
If a length function $\ell$ is determined by an element of $\T$ and extends continuously to $\curr$, then we have a map
$$
I:\TT\to\R, \quad
(X, Y)\mapsto \log(\ell_{X}(Y))
$$
By Theorem \ref{thm.summary}, we may have a priori different distances.
\subsection{Thurston's metric}
First, consider the hyperbolic length function $\ell_{X}$.
In this case, $I:\TT\to\R, (X, Y)\mapsto \ell_{X}(Y)$ is just the restriction of the intersection number $i(\cdot,\cdot)$ by Theorem \ref{thm.Bonahon}.
Since $i(\cdot,\cdot)$ is symmetric, Theorem \ref{thm.summary} applied to 
$\log\circ i:\TT\to\R$ yields a metric:
$$
d_{\mathrm{hyp}}(X,Y):=\sup_{Z\in\T}\log\frac{i(X,Z)}{i(Y,Z)}.
$$
\begin{prop}\label{prop.hyp}
The distance $d_{\mathrm{hyp}}$ coincides with Thurston's asymmetric distance $d_{L}$ and 
horo-compactification via 
$$\T\to\Lipi(\T,d_{\mathrm{hyp}}), \quad Z\mapsto \log\left( i(\cdot, Z)\right)-\log\left(i(b,Z)\right)$$
coincides with the Thurston compactification.
\end{prop}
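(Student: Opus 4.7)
The plan has two parts: identifying $d_{\mathrm{hyp}}$ with the Thurston Lipschitz distance $d_{L}$, and recognizing the horo-compactification as the Thurston compactification.

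For the distance identification, by Theorem~\ref{thm.Bonahon}(4) and Proposition~\ref{prop.Th-Tei}(1) I rewrite $d_{L}(X,Y)=\sup_{\alpha\in\scc}\log\frac{i(X,\alpha)}{i(Y,\alpha)}$. To prove $d_{\mathrm{hyp}}\leq d_{L}$, set $K=e^{d_{L}(X,Y)}$ and observe that the linear functional $C\mapsto i(X,C)-K\cdot i(Y,C)$ on $\curr$ is continuous and nonpositive on $\scc$; since weighted closed curves are dense in $\curr$ by Theorem~\ref{thm.Bonahon}(1), it is nonpositive everywhere, in particular at $C=L_{Z}$, giving $\frac{i(X,L_{Z})}{i(Y,L_{Z})}\leq K$ for every $Z\in\T$. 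For the reverse $d_{\mathrm{hyp}}\geq d_{L}$, fix $\alpha\in\scc$ and use Theorem~\ref{thm.Bonahon}(3): since $[\alpha]\in\pmf$ lies in the closure of the Bonahon image of $\T$ inside $\mathbb{P}\curr$, choose $Z_{n}\in\T$ and $c_{n}>0$ with $L_{Z_{n}}/c_{n}\to\alpha$ in $\curr$. Continuity of $i$ then yields $\frac{i(X,L_{Z_{n}})}{i(Y,L_{Z_{n}})}\to\frac{\ell_{X}(\alpha)}{\ell_{Y}(\alpha)}$, so $d_{\mathrm{hyp}}(X,Y)\geq\log\frac{\ell_{X}(\alpha)}{\ell_{Y}(\alpha)}$ for every $\alpha$.

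For the horo-compactification, I would construct a continuous injection $\Phi$ from the Thurston compactification $\T\cup\pmf$ into $\Lipi(\T,d_{\mathrm{hyp}}^{\mathrm{sym}})$ by setting $\Phi(Z)=\mathcal{L}(Z)$ for $Z\in\T$ and $\Phi([\lambda])(X)=\log i(X,\lambda)-\log i(b,\lambda)$ for $[\lambda]\in\pmf$; the latter is invariant under scaling $\lambda\mapsto c\lambda$, hence well defined. Continuity is automatic on the interior, and at boundary points it follows by passing from projective convergence in $\mathbb{P}\curr$ to convergence in $\curr$ after a suitable rescaling and invoking continuity of $i$. Once $\Phi$ is continuous and injective, compactness of $\T\cup\pmf$ makes it a homeomorphism onto a closed subset of $\Lipi$; since $\Phi(\T)=\mathcal{L}(\T)$ and every $\Phi([\lambda])$ is realized as $\lim_{n}\mathcal{L}(Z_{n})$ for some $Z_{n}\to[\lambda]$, this closed image coincides with $\overline{\mathcal{L}(\T)}$.

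The main obstacle is injectivity of $\Phi$, which reduces to length-function rigidity: if $i(\cdot,\mu_{1})$ and $i(\cdot,\mu_{2})$ are proportional functions on $\T$ for $\mu_{1},\mu_{2}$ in the Bonahon image of $\T\cup\pmf$ inside $\mathbb{P}\curr$, then $[\mu_{1}]=[\mu_{2}]$. This handles injectivity among boundary points (two projective measured laminations with proportional length functions on $\T$ coincide) and excludes the mixed case (Liouville currents and measured laminations occupy disjoint loci of $\mathbb{P}\curr$ by Theorem~\ref{thm.Bonahon}(3)). Injectivity on the interior $\T$ is Theorem~\ref{thm.lipi}, which applies once one checks that $I=\log\circ i$ separates points of $\T$, and this again follows from the injectivity of the Bonahon embedding.
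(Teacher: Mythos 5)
Your proposal follows the same route as the paper's proof in both halves: $d_{\mathrm{hyp}}\le d_L$ via the Lipschitz bound on closed curves plus density of weighted closed curves in $\curr$ and continuity of $i$; $d_{\mathrm{hyp}}\ge d_L$ via Thurston's supremum formula over $\scc$ and the fact that simple closed curves lie in the closure of the Liouville image of $\T$ in $\mathbb{P}\curr$; and the compactification statement via a continuous injection of $\T\cup\pmf$ into $\Lipi(\T,d_{\mathrm{hyp}})$ followed by the compact-to-Hausdorff argument. One small slip in the first half: you verify nonpositivity of $C\mapsto i(X,C)-K\,i(Y,C)$ only on $\scc$, but weighted \emph{simple} closed curves are not dense in $\curr$; you need nonpositivity on all weighted closed curves (which Theorem~\ref{thm.Bonahon}(1) makes dense), and this is free since a $K$-Lipschitz map contracts the length of every closed geodesic, not just simple ones.

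The genuine gap is in the injectivity argument, where you twice substitute ``distinct in $\mathbb{P}\curr$'' for the statement actually needed, namely ``non-proportional length functions on $\T$.'' You correctly isolate the rigidity principle (proportional functions $i(\cdot,\mu_1)\sim i(\cdot,\mu_2)$ force $[\mu_1]=[\mu_2]$) as the crux, but then propose to settle the mixed case by noting that Liouville currents and measured laminations occupy disjoint loci of $\mathbb{P}\curr$, and the interior case by ``injectivity of the Bonahon embedding.'' Neither implication holds: the rigidity principle is precisely the assertion that distinctness in $\mathbb{P}\curr$ implies distinctness of projective length functions, so you cannot invoke distinctness in $\mathbb{P}\curr$ as its proof. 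The mixed case does require an argument, for instance: if $i(\cdot,L_Z)=c\,i(\cdot,\lambda)$ on $\T$ for some $Z\in\T$ and $[\lambda]\in\pmf$, take $X_n\to[\lambda]$ in the Thurston compactification, so $L_{X_n}/c_n\to\lambda$ in $\curr$ with $c_n\to\infty$; then $i(L_{X_n},\lambda)/c_n\to i(\lambda,\lambda)=0$ while $i(L_{X_n},L_Z)/c_n\to i(\lambda,L_Z)=\ell_Z(\lambda)>0$, so the ratio is unbounded and cannot be constant. For the interior case one can use $i(L_X,L_X)=\pi^2|\chi(S)|$ (Theorem~\ref{thm.Bonahon}(5)) to pin the proportionality constant to $1$ and then appeal to marked length spectrum rigidity. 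The paper is itself terse here, but it leans on the correct fact (lengths of simple closed curves determine the hyperbolic structure) rather than on a non sequitur, so this step of your proposal needs to be repaired rather than merely expanded.
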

\begin{proof}
Thurston's asymmetric Lipschitz distance $d_{L}$ is defined in \cite{Thurston} as the minimal Lipschitz constant among homeomorphisms compatible with markings.
Hence, for any weighted closed curve $a\subset S$, we have
$$
\log\frac{i(X,a)}{i(Y,a)}\leq d_{L}(X,Y)
$$
As weighted closed curves are dense, we see that $d_{\mathrm{hyp}}\leq d_{L}$.

By Thurston's characterization (Proposition \ref{prop.Th-Tei}(1)), we see that $d_{L}$ is realized by taking the supremum over the simple closed curves.
As we know (Theorem \ref{thm.Bonahon} (3)) that closure of $\T$ in $\mathbb{P}\curr$ is the Thurston compactification, which contains all the simple closed curves, we see that $d_{\mathrm{hyp}}=d_{L}$.

By the continuity of $i(\cdot,\cdot)$ and the fact that the map $\log\left(i(\cdot, Z)\right)-\log\left(i(b,Z)\right)$ is invariant under multiplication by positive numbers on $\curr$, 
$$\mathbb{P}\curr\supset\T\to\Lipi(\T,d_{\mathrm{hyp}}), \quad Z\mapsto \log\left(i(\cdot, Z)\right)-\log\left(i(b,Z)\right)$$
extends continuously to the Thurston compactification.
By Theorem\ref{thm.summary} and the fact that the hyperbolic length of simple closed curves characterizes the hyperbolic structure, we see that
$$
\overline{\T}\to\Lipi(\T,d_{\mathrm{hyp}})
$$
is a continuous injective map from a compact space to a Hausdorff space, which in turn is a homeomorphism onto its image.
\end{proof}
\begin{rmk}
Proposition \ref{prop.hyp} recovers Walsh's characterization of the Thurston boundary \cite{Walsh} in terms of intersection numbers.
\end{rmk}

For later discussion, we give a lower bound for the intersection number 
between Liouville currents in terms of Thurston's distance. 
This estimate may be of independent interest. 
The intersection number $i(X,Y)$ reflects the Weil--Petersson geometry 
through its infinitesimal behavior (see~\cite[Theorem~19]{Bonahon}). 
In contrast, Proposition~\ref{prop.int-est} provides a large-scale estimate: 
it relates $\log i(X,Y)$ to Thurston's Lipschitz distance $d_L(X,Y)$ 
by explicit inequalities valid throughout Teichm\"uller space. 
The appearance of the systole in the bounds reflects the behavior 
in the thin part.
\begin{prop}[Intersection number v.s. Lipschitz distance]\label{prop.int-est}
For any $X,Y\in\T\subset\curr$, we have
$$d_{L}(Y,X) - \log\left(\frac{4}{\mathrm{sys}(Y)^{2}}\right) 
\leq  \log i(X,Y) \leq
d_{L}(Y,X)+\log\left(\pi^{2}|\chi(S)|\right)
$$
\end{prop}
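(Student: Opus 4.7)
For the upper bound, I use Proposition~\ref{prop.hyp}: $d_L = d_{\mathrm{hyp}}$, so $\log(i(Y,\gamma)/i(X,\gamma)) \le d_L(Y,X)$ for every simple closed curve $\gamma$, with the supremum attained over those. By density of weighted closed curves in $\curr$ (Theorem~\ref{thm.Bonahon}(1)) and continuity of the intersection number (Theorem~\ref{thm.Bonahon}(2)), this inequality extends to $i(Y,c) \le e^{d_L(Y,X)}\, i(X,c)$ for all $c \in \curr$. Setting $c = L_X$ and using $i(X,X) = \pi^2|\chi(S)|$ (Theorem~\ref{thm.Bonahon}(5)) gives $i(X,Y) \le e^{d_L(Y,X)} \pi^2|\chi(S)|$, which is the upper bound after taking logarithms.

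For the lower bound, which rearranges to $i(X,Y) \ge (\mathrm{sys}(Y)^2/4)\, e^{d_L(Y,X)}$, my plan is to pick a simple closed curve $\gamma$ with $\log(\ell_Y(\gamma)/\ell_X(\gamma))$ within $\epsilon$ of $d_L(Y,X)$ --- possible since by Thurston's theorem the supremum defining $d_L(Y,X) = \sup_\gamma \log(\ell_Y(\gamma)/\ell_X(\gamma))$ is attained over simple closed curves --- and reduce to proving $4\, i(X,Y)\, \ell_X(\gamma) \ge \mathrm{sys}(Y)^2\, \ell_Y(\gamma)$ for every simple closed curve $\gamma$.

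The natural tools here are: (i) the Cauchy--Schwarz-type extension of Minsky's inequality (Lemma~\ref{lem.Minsky}) to all geodesic currents, obtained from density of weighted closed curves together with continuity of $\sqrt{\ext_Z}$ (Theorem~\ref{thm.GT}), which yields $i(\mu,\nu) \le \sqrt{\ext_Z(\mu)\ext_Z(\nu)}$ for all $\mu,\nu \in \curr$; (ii) its specialization $i(X,Y)^2 \le (\pi^3|\chi(S)|/2)\, \ext_Y(L_X)$, using $\ext_Y(L_Y) = \pi^3|\chi(S)|/2$ from Theorem~\ref{thm.ExHyp}; and (iii) Minsky applied to the systolic curve $\alpha$ of $Y$, giving $\ell_X(\alpha)^2 \le \ext_Y(\alpha)\, \ext_Y(L_X)$, together with the maximizing $\gamma$, giving $\ell_Y(\gamma)^2 \le \ext_Y(\gamma)\ext_Y(L_Y)$. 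The main obstacle is combining these estimates to extract the precise factor $\mathrm{sys}(Y)^2/4$, especially when the maximizing $\gamma$ lies deep in the thin part near $\alpha$. The appearance of $\mathrm{sys}(Y)/2$ (the injectivity radius of $Y$) strongly suggests that the systole enters through a flat-cylinder argument on the Jenkins--Strebel differential of $\alpha$, analogous to the proof of Lemma~\ref{lem.Minsky}, with the collar of $\alpha$ on $Y$ providing the required geometric lower bound on $i(X,Y)$.
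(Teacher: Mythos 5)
Your upper bound is essentially the paper's argument (sup over currents, plug in the Liouville current of one of the two points, use $i(L_Y,L_Y)=\pi^{2}|\chi(S)|$), and your reduction of the lower bound to the claim that $i(\gamma,L_X)\le \frac{4}{\mathrm{sys}(Y)^{2}}\, i(L_Y,\gamma)\, i(L_Y,L_X)$ for every closed curve $\gamma$ is the correct reformulation. But that bilinear inequality \emph{is} the entire content of the lower bound, and you have not proved it; the paper imports it wholesale as Torkaman's estimate $i(G_1,G_2)\le \frac{4}{\mathrm{sys}(Y)^{2}}\, i(L_Y,G_1)\, i(L_Y,G_2)$ for all $G_1,G_2\in\curr$ (\cite[Proposition 2.4]{Torkaman}), sets $G_2=L_X$, and takes the supremum over $G_1$. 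That estimate is proved by a local counting argument in the hyperbolic metric of $Y$ (each embedded ball of radius comparable to $\mathrm{sys}(Y)/2$ contains at most $\sim \ell_1\ell_2/\mathrm{sys}(Y)^{2}$ transverse intersections of two geodesics of lengths $\ell_1,\ell_2$), not by any extremal-length or flat-cylinder argument.

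The tools you propose cannot close this gap. Minsky's inequality bounds an intersection number \emph{from above} by a product of extremal lengths, so to get $i(\gamma,L_X)\le C\,\ell_Y(\gamma)\, i(L_Y,L_X)$ from it you would additionally need upper bounds of the form $\ext_Y(\gamma)\le \nu(Y)\,\ell_Y(\gamma)^{2}$ and $\ext_Y(L_X)\le C(Y)\, i(L_Y,L_X)^{2}$. The first is Matsuzaki's theorem, but it is stated only for \emph{simple} closed curves, whereas the curves dense in $\curr$ are arbitrary closed curves, so it does not extend to the $\gamma$ you need; the second is an extension of Matsuzaki to Liouville currents that is nowhere established (the paper's remark after Corollary \ref{coro.Ext-est} records only the reverse inequality $i(L_X,L_Y)^{2}/\mathrm{Area}(Y)\le \ext_Y(L_X)$). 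Even if both held, the resulting constant would be of the shape $\max\{r_0/\mathrm{sys}(Y),r_1\}$ rather than $4/\mathrm{sys}(Y)^{2}$. Likewise, the Jenkins--Strebel cylinder of the systolic curve controls extremal length, not the hyperbolic collar geometry that actually produces the factor $\mathrm{sys}(Y)^{2}/4$. So the lower bound stands as a genuine gap: you need to either cite or reprove Torkaman's inequality.
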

where $\mathrm{sys}(Y)$ denote the systole of $Y$, i.e. the hyperbolic length of the shortest closed geodesic on $Y$.
\begin{proof}
We use the following estimate due to Torkaman \cite[Proposition 2.4]{Torkaman}.
For any $G_{1}, G_{2}\in\curr$ and $Y\in\T$, we have
\begin{equation}
i(G_{1}, G_{2})\leq \frac{4}{\mathrm{sys}(Y)^{2}}i(Y,G_{1})i(Y,G_{2}).
\end{equation}
Setting $G_{2} =X$, we have
\begin{equation}
\frac{i(X, G_{1})}{i(Y, G_{1})}\leq \frac{4}{\mathrm{sys}(Y)^{2}}\cdot i(Y,X).\label{eq.Torkaman}
\end{equation}
Since \eqref{eq.Torkaman} holds for any $G_{1}\in\curr$, we have
$$d_{L}(Y,X) - \log\left(\frac{4}{\mathrm{sys}(Y)^{2}}\right) \leq  \log i(X,Y)$$
by Proposition \ref{prop.hyp}.

The upper bound is obtained by Proposition \ref{prop.hyp} and Theorem \ref{thm.Bonahon}:
\begin{align*}
&\log\frac{i(X,Y)}{i(Y,Y)}\leq \sup_{Z\in\T}\log\frac{i(X,Z)}{i(Y,Z)} = d_{L}(Y,X)\\
\Longrightarrow &\log(i(X,Y))-\log\left(\pi^{2}|\chi(S)|\right)\leq d_{L}(Y,X).
\end{align*}
\end{proof}

\subsection{Teichm\"uller metric and its horofunction counterpart}\label{sec.counterpart}
Thanks to the extension of the square root of the extremal length to $\curr$ (Theorem \ref{thm.GT}), we may define
$$
I_{E}:\TT\to\R, \quad (X,Y)\mapsto \frac{1}{2}\log\left(\ext_{X}(Y)\right)
$$
where $Y$ is identified with the Liouville current.

\begin{defi}[distances via extremal length]\label{defi.E12}
As $I_{E}$ is asymmetric, we may have two distances by Theorem \ref{thm.summary}.
\begin{itemize}
\item $\displaystyle d_{E,1}(X,Y):=\sup_{Z\in\T} I_{E}(X,Z) - I_{E}(Y,Z) = \frac{1}{2}\sup_{Z\in\T}\log\frac{\ext_{X}(Z)}{\ext_{Y}(Z)}$.
\item $\displaystyle d_{E,2}(X,Y):=\sup_{Z\in\T} I_{E}(Z,X) - I_{E}(Z,Y) = \frac{1}{2}\sup_{Z\in\T}\log\frac{\ext_{Z}(X)}{\ext_{Z}(Y)}$.
\end{itemize}
\end{defi}
We first prove that $d_{E,1},d_{E,2}$ are finite and are distances.
\begin{prop}\label{prop.E1}
The distance $d_{E,1}$ coincides with the Teichm\"uller distance.
\end{prop}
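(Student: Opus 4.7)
The plan is to compare the definition of $d_{E,1}$, in which the supremum runs over $Z\in\T$ viewed as Liouville currents, with Kerckhoff's formula (Proposition \ref{prop.Th-Tei}(2)), in which the supremum runs over $\alpha\in\scc$. Two approximation arguments in $\curr$ then yield the two inequalities.

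The key observation underlying both directions is that $\ext_X$ is homogeneous of degree $2$ on $\curr$, so the ratio $\ext_X(G)/\ext_Y(G)$ depends only on the projective class $[G]\in\mathbb{P}\curr$, provided $\ext_Y(G)>0$. Combined with the continuity of $\sqrt{\ext}$ on $\curr$ from Theorem \ref{thm.GT} (hence continuity of $\ext$), this lets me push convergence in $\curr$ through to the ratios. For the currents to be encountered, positivity of $\ext_Y$ is automatic: on a Liouville current, $\ext_Y(L_Z)=\frac{\pi^3}{2}|\chi(S)|>0$ by Theorem \ref{thm.ExHyp}, and on a simple closed curve it is the classical extremal length, which is positive.

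For $d_{E,1}\leq d_T$, I would use density of weighted simple closed curves in $\curr$ (Theorem \ref{thm.Bonahon}(1)) to choose, for each $Z\in\T$, a sequence $\lambda_n\alpha_n\to L_Z$ in $\curr$ with $\alpha_n\in\scc$ and $\lambda_n>0$. Homogeneity and continuity give
\[
\frac{\ext_X(L_Z)}{\ext_Y(L_Z)}=\lim_n\frac{\ext_X(\lambda_n\alpha_n)}{\ext_Y(\lambda_n\alpha_n)}=\lim_n\frac{\ext_X(\alpha_n)}{\ext_Y(\alpha_n)}\leq e^{2 d_T(X,Y)},
\]
by Kerckhoff's formula, and taking the supremum over $Z$ gives the desired bound. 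For $d_{E,1}\geq d_T$, I would use Theorem \ref{thm.Bonahon}(3): the closure of $\T\hookrightarrow\mathbb{P}\curr$ is the Thurston compactification, which contains every projective simple closed curve. For each $\alpha\in\scc$ I can therefore find $Z_n\in\T$ and $\lambda_n>0$ with $\lambda_n L_{Z_n}\to\alpha$ in $\curr$; the same homogeneity and continuity argument gives
\[
\frac{\ext_X(L_{Z_n})}{\ext_Y(L_{Z_n})}=\frac{\ext_X(\lambda_n L_{Z_n})}{\ext_Y(\lambda_n L_{Z_n})}\longrightarrow\frac{\ext_X(\alpha)}{\ext_Y(\alpha)},
\]
so $d_{E,1}(X,Y)\geq\frac{1}{2}\log\frac{\ext_X(\alpha)}{\ext_Y(\alpha)}$ for every $\alpha\in\scc$, and another application of Kerckhoff's formula finishes the proof.

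The argument is essentially an exchange-of-suprema via Bonahon's density and continuity theorems; the only thing requiring care is checking that the degree-$2$ homogeneity is compatible with the scalings arising in the two approximations, and that denominators remain bounded away from zero. Both are handled by the explicit positivity statements above, so I do not expect a substantive obstacle.
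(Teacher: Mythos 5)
Your lower bound ($d_{E,1}\geq\dt$) is correct and is exactly the paper's route: approximate each projective simple closed curve by projective Liouville currents via Theorem \ref{thm.Bonahon}(3), pass to the limit using continuity and degree-$2$ homogeneity of the extended extremal length, and invoke Kerckhoff's formula. The upper bound, however, has a genuine gap. You invoke ``density of weighted simple closed curves in $\curr$'' to produce $\lambda_n\alpha_n\to L_Z$ with $\alpha_n\in\scc$. Theorem \ref{thm.Bonahon}(1) asserts density of weighted \emph{closed} curves, not simple ones, and the distinction is essential here: weighted simple closed curves have zero self-intersection, self-intersection number is continuous on $\curr$, so the closure of the set of weighted simple closed curves is contained in $\mf$, whereas $i(L_Z,L_Z)=\pi^{2}|\chi(S)|>0$ by Theorem \ref{thm.Bonahon}(5). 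Hence no sequence of weighted simple closed curves converges to $L_Z$, and the step at which you apply Kerckhoff's formula (a statement about $\scc$ only) never gets off the ground.

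The repair is what the paper does: approximate $L_Z$ by weighted closed curves $a$ that may be non-simple. For those Kerckhoff's formula does not apply directly; instead one uses the fact that a $K$-quasiconformal map distorts the extremal length of \emph{any} homotopy class of closed curve by at most a factor of $K=e^{2\dt(X,Y)}$, giving $\frac{1}{2}\log\bigl(\ext_X(a)/\ext_Y(a)\bigr)\leq\dt(X,Y)$ for every weighted closed curve $a$. Continuity of the Mart\'inez-Granado--Thurston extension (Theorem \ref{thm.GT}) then transports this bound to all of $\curr$, in particular to $L_Z$, and the upper bound follows. With that substitution your argument coincides with the paper's proof; the homogeneity and positivity checks you record are fine as stated.
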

\begin{proof}
The proof goes similarly to Proposition \ref{prop.hyp}.
As the Teichm\"uller distance is defined via dilatation of the quasi-conformal mappings and the fact that quasi-conformal maps distort extremal length up to the multiplication of the dilatations, we see that for any weighted closed curve $a$,
$$
\frac{1}{2}\log\frac{\ext_{X}(a)}{\ext_{Y}(a)}\leq \dt(X,Y).
$$
Hence, the density of the weighted closed curves implies that $d_{E,1}\leq \dt$.
By Kerckhoff's formula (Proposition \ref{prop.Th-Tei}(2)), and the fact that $\sqrt{\ext_{X}}(\cdot)$ is extended continuously to $\curr$, we see that $d_{E,1} = \dt$.
\end{proof}

Since $d_{E,1}$ is the Teichm\"uller distance, we call $d_{E,2}$ a {\em horofunction counterpart} to the Teichm\"uller distance.

The proof that the counterpart $d_{E,2}$ is a distance is more subtle.
Following Miyachi \cite{ray2}, we define a function as follows.
We fix a basepoint $b\in\T$.
\begin{defi}
We define \(\mathcal{E}:\T\times\curr\to\R_{>0}\) by
\[
\mathcal{E}(Z,G):=\frac{\sqrt{\ext_{Z}(G)}}{\exp(\dt(b,Z))}
\]
\end{defi}
\begin{lem}[c.f. {\cite[Lemma 1, and its remark]{ray2}}]\label{lem.decrese}
Suppose that $\gamma:\R_{\geq 0}\to\T$ is a Teichm\"uller geodesic ray with $\gamma(0) = b$.
Then for any $G\in\curr$, the map
\[
[0,\infty]\to\R, ~~t\mapsto \mathcal{E}(\gamma(t),G)
\]
is positive and non-increasing.
\end{lem}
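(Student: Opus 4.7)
The plan is to reduce the monotonicity of $\mathcal{E}(\gamma(t), G)$ to a Kerckhoff-type inequality for extremal length along the Teichm\"uller ray, and then to extend that inequality from closed curves to arbitrary geodesic currents by a density-continuity argument; positivity will be immediate.

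First, since $\gamma$ is a Teichm\"uller geodesic ray parametrized by arclength with $\gamma(0)=b$, we have $\dt(b,\gamma(t))=t$ and, more generally, $\dt(\gamma(s),\gamma(t))=t-s$ for any $0\le s\le t$, so that $\exp(\dt(b,\gamma(t)))=e^{t}$.

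The central step is to establish the inequality
\begin{equation*}
\sqrt{\ext_{X}(G)}\;\le\;e^{\dt(X,Y)}\sqrt{\ext_{Y}(G)}\qquad\text{for all }X,Y\in\T\text{ and }G\in\curr.
\end{equation*}
When $G=\alpha$ is a weighted closed curve, this follows from Kerckhoff's formula together with the quasiconformal distortion of extremal length (Proposition \ref{prop.Th-Tei}(2)), which forces $\tfrac12\log\bigl(\ext_{X}(\alpha)/\ext_{Y}(\alpha)\bigr)\le\dt(X,Y)$. Since weighted closed curves are dense in $\curr$ by Theorem \ref{thm.Bonahon}(1), and $\sqrt{\ext_{Z}}$ extends continuously to $\curr$ by Theorem \ref{thm.GT} for every $Z\in\T$, the inequality passes to the closure and holds for every geodesic current. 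Applying it with $X=\gamma(t)$ and $Y=\gamma(s)$ and dividing both sides by $e^{t}$ yields
\begin{equation*}
\mathcal{E}(\gamma(t),G)=\frac{\sqrt{\ext_{\gamma(t)}(G)}}{e^{t}}\;\le\;\frac{e^{t-s}\sqrt{\ext_{\gamma(s)}(G)}}{e^{t}}=\mathcal{E}(\gamma(s),G),
\end{equation*}
which is the desired monotonicity. Positivity is immediate: extremal length is nonnegative (strictly positive whenever $G$ is not the zero current) and the denominator $e^{\dt(b,\gamma(t))}$ is positive.

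The one nontrivial point is passing from Kerckhoff's inequality on weighted closed curves to the same inequality on arbitrary geodesic currents; this is exactly where the continuous extension of $\sqrt{\ext_{Z}}$ from Theorem \ref{thm.GT}, combined with the density statement in Theorem \ref{thm.Bonahon}(1), does the essential work. Everything else in the argument is just the fact that $\gamma$ is parametrized at unit speed with respect to $\dt$.
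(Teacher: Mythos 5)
Your proof is correct and follows essentially the same route as the paper: the key inequality $\sqrt{\ext_{\gamma(t)}(G)}\leq e^{\dt(\gamma(t),\gamma(s))}\sqrt{\ext_{\gamma(s)}(G)}$ is exactly the inequality $d_{E,1}\leq\dt$ from Proposition~\ref{prop.E1}, which the paper cites directly and which you re-derive inline via quasiconformal distortion on weighted closed curves plus density and the continuity from Theorem~\ref{thm.GT}; both arguments then conclude by unit-speed additivity along the ray. (Only the distortion bound, not Kerckhoff's formula itself, is needed for the direction of the inequality you use.)
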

\begin{proof}

Let \( t \geq s \geq 0 \). Then by Proposition \ref{prop.E1}, we have  
\begin{align*}
    & \frac{\operatorname{Ext}_{\gamma(t)}(G)}{\operatorname{Ext}_{\gamma(s)}(G)}
    \leq \sup_{F \in \curr} \frac{\operatorname{Ext}_{\gamma(t)}(F)}{\operatorname{Ext}_{\gamma(s)}(F)}
    = \exp(2 d_{T}(\gamma(t), \gamma(s))) 
    = \frac{\exp(2d_{T}(\gamma(t), b))}{\exp(2d_{T}(\gamma(s), b))} \\
    \Longrightarrow & \frac{\sqrt{\operatorname{Ext}_{\gamma(t)}(G)}}{\exp(d_{T}(\gamma(t), b))}
    \leq \frac{\sqrt{\operatorname{Ext}_{\gamma(s)}(G)}}{\exp(d_{T}(\gamma(s), b))} 
    \iff  \mathcal{E}(\gamma(t), G) \leq \mathcal{E}(\gamma(s), G).
\end{align*}
\end{proof}

Recall that the Teichm\"uller geodesic rays based at $b\in\T$ are determined by the element in $\mf$.
\begin{defi}[c.f. {\cite[\S 3.2]{ray2}}]
Let $\alpha\in\mf$ and $\gamma_{\alpha}:\R_{\geq 0}\to\T$ denote the Teichm\"uller ray determined by $\alpha$. Then we define
\[
e:\mf\times\curr\to\R_{\geq 0},~~(\alpha, G)\mapsto \lim_{t\to\infty}\mathcal{E}(\gamma_{\alpha}(t),G).
\]
The limit exists due to Lemma \ref{lem.decrese}.
\end{defi}

\begin{prop}\label{prop.keyestimate}
Fix \(\alpha\in\mf\). 
Then we have
\[
\frac{i(\alpha, G)}{\sqrt{\ext_{b}(\alpha)}}\leq e({\alpha},G) \leq 
\mathcal{E}(\gamma(t),G).
\]
In particular, it follows
\[
\quad i(\alpha, G) > 0 \implies e({\alpha},G) > 0.
\]
\end{prop}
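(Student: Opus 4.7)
The plan is to handle the two inequalities separately. The right-hand inequality $e(\alpha,G)\leq \mathcal{E}(\gamma(t),G)$ is immediate from the definition of $e(\alpha,G)$ as the limit $\lim_{s\to\infty}\mathcal{E}(\gamma_\alpha(s),G)$ together with the monotonicity established in Lemma \ref{lem.decrese}: the function $s\mapsto \mathcal{E}(\gamma_\alpha(s),G)$ is non-increasing, so its infimum (equal to its limit as $s\to\infty$) is bounded above by its value at any finite $t\geq 0$.

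For the left-hand inequality, the idea is to apply Minsky's inequality (Lemma \ref{lem.Minsky}) at each point of the Teichm\"uller ray and exploit the exponential decay of the extremal length of $\alpha$ along its own ray. Applying Lemma \ref{lem.Minsky} with $Z=\gamma_\alpha(t)$ to the pair $(\alpha,G)\in\mf\times\curr$ gives
\[
i(\alpha,G)\leq \sqrt{\ext_{\gamma_\alpha(t)}(\alpha)}\cdot\sqrt{\ext_{\gamma_\alpha(t)}(G)}.
\]
I would then invoke the classical fact that along the Teichm\"uller geodesic ray whose defining unit-area quadratic differential has $\alpha$ as (a scalar multiple of) its horizontal foliation, the Teichm\"uller stretch rescales the horizontal transverse measure by $e^{t}$ while preserving the total area, which translates to
\[
\sqrt{\ext_{\gamma_\alpha(t)}(\alpha)}=e^{-t}\sqrt{\ext_b(\alpha)}.
\]
Substituting and using $\dt(b,\gamma_\alpha(t))=t$ yields
\[
\frac{i(\alpha,G)}{\sqrt{\ext_b(\alpha)}}\leq \frac{\sqrt{\ext_{\gamma_\alpha(t)}(G)}}{\exp(\dt(b,\gamma_\alpha(t)))}=\mathcal{E}(\gamma_\alpha(t),G),
\]
and letting $t\to\infty$ gives the claimed lower bound.

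The ``in particular'' statement is then formal: if $i(\alpha,G)>0$ then $\alpha\neq 0$ in $\mf$, hence $\ext_b(\alpha)\in(0,\infty)$, and the lower bound just proved forces $e(\alpha,G)>0$. The only mildly delicate point in the whole argument is invoking the exact decay $\sqrt{\ext_{\gamma_\alpha(t)}(\alpha)}=e^{-t}\sqrt{\ext_b(\alpha)}$; this is standard, but it relies on identifying $\alpha$ with the horizontal foliation of the defining quadratic differential in the chosen parametrization of rays by $\mf$. I would cite this directly from Kerckhoff's or Gardiner's treatment of extremal length under the Teichm\"uller deformation rather than reproving it.
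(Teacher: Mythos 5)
Your proposal is correct and follows essentially the same route as the paper: apply Minsky's inequality at $Z=\gamma_\alpha(t)$, use the exact decay $\ext_{\gamma_\alpha(t)}(\alpha)=e^{-2t}\ext_b(\alpha)$ along the ray defined by $\alpha$, and combine with the monotonicity of $\mathcal{E}(\gamma_\alpha(t),G)$ from Lemma \ref{lem.decrese} to pass to the limit. The paper uses the same decay identity (written as $e^{2t}\cdot\ext_{\gamma_\alpha(t)}(\alpha)=\ext_b(\alpha)$) without further comment, so your extra care in flagging and sourcing that step is the only difference.
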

\begin{proof}
By Minsky's inequality (Lemma \ref{lem.Minsky}) 

\begin{align*}
i(\alpha, G)^2 \leq &  ~\ext_{\gamma_{\alpha}(t)}(\alpha) \cdot \ext_{\gamma_{\alpha}(t)}(G)\\
=  & ~\frac{\ext_{\gamma_{\alpha}(t)}(G)}{e^{2t}}\cdot e^{2t}\cdot\ext_{\gamma_{\alpha}(t)}(\alpha)\\
=  & ~\mathcal{E}(\gamma_{\alpha}(t), G)^{2}\cdot\ext_{b}(\alpha)\\
\end{align*}
which implies
\[
\frac{i(\alpha, G)^2}{\ext_{b}(\alpha)}  \leq \mathcal{E}(\gamma_{\alpha}(t), G)^{2}
\]
Since the left hand side is independent of $t$, and \(\mathcal{E}(\gamma_{\alpha}(t), G)\) is non-increasing (Lemma \ref{lem.decrese}), we have
\[\frac{i(\alpha, G)}{\sqrt{\ext_{b}(\alpha)}}\leq e({\alpha},G) \leq 
\mathcal{E}(\gamma(t),G).\]

That \( i(\alpha, G) > 0 \) implies \( e({\alpha},G) > 0 \) is immediate.
\end{proof}

We now recall the work of Matsuzaki. Let $Z\in\T$
\begin{thm}[\cite{Matsuzaki}]\label{thm.Matsuzaki}
Let us define $\nu(Z)$ by
\[
\nu(Z) := \sup \left\{ \frac{\ext_{Z}(\alpha)}{i(Z,\alpha)^2} \ \middle| \ [\alpha] \in \mathcal{S} \right\},
\]
and let $\mathrm{sys}(Z)$ denote the hyperbolic systole of $Z$.

Then there exist universal constants \( r_0 \) and \( r_1 \) such that for any $Z\in\T$,
\[
\frac{1}{\pi \cdot\mathrm{sys}(Z)} \leq \nu(Z) \leq \max \left\{ \frac{r_0}{\mathrm{sys}(Z)}, r_1 \right\}.
\]
\end{thm}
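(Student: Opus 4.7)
The plan is to reduce Matsuzaki's two-sided inequality to the classical comparison between hyperbolic and extremal lengths of a single simple closed curve. By Theorem~\ref{thm.Bonahon}(4) one has $i(Z,\alpha)=\ell_{Z}(\alpha)$ for every $\alpha\in\scc$, so
\[
\nu(Z)=\sup_{[\alpha]\in\scc}\frac{\ext_{Z}(\alpha)}{\ell_{Z}(\alpha)^{2}},
\]
and the problem becomes one of controlling the ratio $\ext_{Z}(\alpha)/\ell_{Z}(\alpha)^{2}$ over simple closed geodesics on $Z$.

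For the lower bound I would invoke Maskit's classical estimate $\pi\,\ext_{Z}(\alpha)\geq \ell_{Z}(\alpha)$, which is obtained by using the hyperbolic metric itself as a test metric in the definition of extremal length and extracting the contribution of an annular neighbourhood of $\alpha$. Evaluating $\nu(Z)$ at the systole $\alpha_{0}$, for which $\ell_{Z}(\alpha_{0})=\mathrm{sys}(Z)$, yields
\[
\nu(Z)\geq \frac{\ext_{Z}(\alpha_{0})}{\ell_{Z}(\alpha_{0})^{2}}\geq \frac{1}{\pi\,\mathrm{sys}(Z)}.
\]

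For the upper bound I would split the supremum according to how short $\alpha$ is on $Z$. If $\ell_{Z}(\alpha)\leq \epsilon_{0}$ for a Margulis-type constant $\epsilon_{0}$, the embedded hyperbolic collar around $\alpha$ has modulus of order $\pi/\ell_{Z}(\alpha)$, which yields the Maskit upper bound $\ext_{Z}(\alpha)\leq \tfrac{1}{2}\ell_{Z}(\alpha)\,e^{\ell_{Z}(\alpha)/2}\leq C_{0}\ell_{Z}(\alpha)$ and hence
\[
\frac{\ext_{Z}(\alpha)}{\ell_{Z}(\alpha)^{2}}\leq \frac{C_{0}}{\ell_{Z}(\alpha)}\leq \frac{C_{0}}{\mathrm{sys}(Z)},
\]
which is absorbed into the $r_{0}/\mathrm{sys}(Z)$ term. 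If $\ell_{Z}(\alpha)\geq\epsilon_{0}$, one needs a bound $\ext_{Z}(\alpha)/\ell_{Z}(\alpha)^{2}\leq r_{1}$ depending only on $\chi(S)$. Both functionals extend homogeneously and continuously to $\mf$ by Theorem~\ref{thm.GT}, so their ratio descends to a continuous positive function on $\pmf$, which is compact; combining this with Minsky-type thick-thin estimates for extremal length (to handle the case when $Z$ itself lies in a thin part of $\T$) yields the desired universal constant $r_{1}$.

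The main obstacle will be the non-short case of the upper bound. When $\alpha$ has bounded-below hyperbolic length but $Z$ sits deep in a thin region of $\T$, the naive collar argument for $\alpha$ breaks down because the embedded collar around a long $\alpha$ becomes too narrow to yield a useful modulus; in particular one cannot simply re-use the Maskit annulus. Controlling the interaction between $\alpha$ and the short curves of $Z$ uniformly requires a careful application of Minsky's product-region estimates together with a compactness argument in $\pmf$, and it is precisely this step in which the universal constants $r_{0}$ and $r_{1}$ are finally pinned down in terms of the topology of $S$.
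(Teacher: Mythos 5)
First, note that the paper does not prove this statement at all: it is quoted verbatim as a theorem of Matsuzaki and used as a black box, so there is no in-paper argument to compare against. Judged on its own, your proposal gets the easy parts right. The reduction $i(Z,\alpha)=\ell_{Z}(\alpha)$ is correct, the lower bound via Maskit's inequality $\ext_{Z}(\alpha)\geq \ell_{Z}(\alpha)/\pi$ evaluated at the systole is exactly right, and the short-curve half of the upper bound (Maskit's $\ext_{Z}(\alpha)\leq \tfrac{1}{2}\ell_{Z}(\alpha)e^{\ell_{Z}(\alpha)/2}$ together with $\ell_{Z}(\alpha)\geq \mathrm{sys}(Z)$) is fine.

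The genuine gap is in your case $\ell_{Z}(\alpha)\geq\epsilon_{0}$: the statement you are trying to prove there, namely that $\ext_{Z}(\alpha)/\ell_{Z}(\alpha)^{2}\leq r_{1}$ with $r_{1}$ universal, is false, so no amount of product-region or $\pmf$-compactness machinery will deliver it. Concretely, let $Z=Z_{\epsilon}$ have a simple closed curve $\beta$ with $\ell_{Z}(\beta)=\epsilon=\mathrm{sys}(Z)$ and let $\alpha$ be a curve with $i(\alpha,\beta)=1$ chosen so that $\ell_{Z}(\alpha)\asymp 2\log(1/\epsilon)$ (it only has to cross the collar of $\beta$). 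Then $\ext_{Z}(\beta)\lesssim \epsilon/\pi$, so Minsky's inequality (Lemma \ref{lem.Minsky}) gives $\ext_{Z}(\alpha)\geq i(\alpha,\beta)^{2}/\ext_{Z}(\beta)\gtrsim \pi/\epsilon$, hence
\[
\frac{\ext_{Z}(\alpha)}{\ell_{Z}(\alpha)^{2}}\gtrsim \frac{\pi}{4\,\epsilon\,\log^{2}(1/\epsilon)}\longrightarrow\infty
\]
as $\epsilon\to 0$, even though $\ell_{Z}(\alpha)\to\infty$. This quantity is still dominated by $r_{0}/\mathrm{sys}(Z)$, so the theorem survives, but it shows that the correct dichotomy is by the thick/thin decomposition of $Z$ (equivalently of moduli space), not by the length of $\alpha$: on thin surfaces the $r_{0}/\mathrm{sys}(Z)$ term must absorb long curves that cross the short ones. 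Moreover, even on the thick part your argument is incomplete: the compactness of $\pmf$ only gives finiteness of $\nu(Z)$ for each fixed $Z$, and to make $r_{1}$ uniform you need a polynomial upper bound $\ext_{Z}(\alpha)\lesssim \ell_{Z}(\alpha)^{2}$ for long curves, whereas the only upper estimate you invoke (Maskit's, via the embedded collar) is exponential in $\ell_{Z}(\alpha)$ and hence useless there. This uniform quadratic upper bound on thick surfaces is the real content of Matsuzaki's theorem, and your proposal does not supply a mechanism for it.
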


\begin{thm}\label{thm.E2bound}
For any \( G_1, G_2 \in\curr \), \( \alpha\in\mf \), and $Z\in\T$, we have:
\[
\frac{\sqrt{\ext_{Z}(G_{1})}}{\sqrt{\ext_{Z}(G_{2})}} 
\leq 
\frac{i(b, \alpha)}{i(G_2, \alpha)} \cdot \sqrt{\nu(b)} \cdot\sqrt{\ext_b(G_1)} 
\]
In particular, if $G_{2}\in\T\subset\curr$, then 
\[
\frac{\sqrt{\ext_{Z}(G_{1})}}{\sqrt{\ext_{Z}(G_{2})}} 
\leq \exp(d_{L}(G_{2},b))\cdot \sqrt{\nu(b)} \cdot\sqrt{\ext_b(G_1)}.
\]
The right hand side $\exp(d_{L}(G_{2},b))\cdot \sqrt{\nu(b)} \cdot\sqrt{\ext_b(G_1)}$ is independent of $Z$.
\end{thm}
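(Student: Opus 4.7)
The plan is to bound the numerator and denominator of $\sqrt{\ext_Z(G_1)/\ext_Z(G_2)}$ in such a way that the $Z$-dependent factor $e^{d_T(b,Z)}$ arising in each one cancels. The key idea is to choose $\alpha\in\mf$ to be the direction of the Teichm\"uller geodesic ray from $b$ to $Z$, so that $Z$ lies on $\gamma_\alpha$ and Proposition \ref{prop.keyestimate} can be applied at $Z$ itself.

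For the numerator, I would invoke Lemma \ref{lem.decrese} along the ray from $b$ through $Z$: since $t\mapsto\mathcal{E}(\gamma(t),G_1)$ is non-increasing, $\mathcal{E}(Z,G_1)\le\mathcal{E}(b,G_1)=\sqrt{\ext_b(G_1)}$, which unfolds to $\sqrt{\ext_Z(G_1)}\le e^{d_T(b,Z)}\sqrt{\ext_b(G_1)}$. For the denominator, with $\alpha$ as above, Proposition \ref{prop.keyestimate} applied to the pair $(\alpha,G_2)$ yields
\[
\mathcal{E}(Z,G_2)\ge e(\alpha,G_2)\ge\frac{i(\alpha,G_2)}{\sqrt{\ext_b(\alpha)}},
\]
equivalently $\sqrt{\ext_Z(G_2)}\ge e^{d_T(b,Z)}\,i(\alpha,G_2)/\sqrt{\ext_b(\alpha)}$. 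Dividing cancels $e^{d_T(b,Z)}$ and produces
\[
\frac{\sqrt{\ext_Z(G_1)}}{\sqrt{\ext_Z(G_2)}}\le\frac{\sqrt{\ext_b(G_1)\,\ext_b(\alpha)}}{i(\alpha,G_2)}.
\]

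Matsuzaki's estimate (Theorem \ref{thm.Matsuzaki}) then rewrites $\sqrt{\ext_b(\alpha)}\le\sqrt{\nu(b)}\,i(b,\alpha)$, giving the first displayed inequality. For the ``In particular'' claim, when $G_2\in\T$ I would bound $i(b,\alpha)/i(G_2,\alpha)\le\sup_{\alpha\in\mf} i(b,\alpha)/i(G_2,\alpha)=\exp(d_L(G_2,b))$ by Proposition \ref{prop.hyp} (passing from weighted simple closed curves to all of $\mf$ by continuity of intersection). Because the resulting quantity depends only on $b$, $G_1$ and $G_2$, it is manifestly $Z$-independent.

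The main subtlety is the choice of $\alpha$: the cancellation of $e^{d_T(b,Z)}$ hinges on $\alpha$ being the direction of the Teichm\"uller ray from $b$ to $Z$, for this is precisely when Proposition \ref{prop.keyestimate}'s monotone estimate $e(\alpha,G_2)\le\mathcal{E}(\gamma_\alpha(t),G_2)$ is valid at $Z$. With a generic $\alpha$ one would only have the Kerckhoff bound $\sqrt{\ext_Z(\alpha)}\le e^{d_T(b,Z)}\sqrt{\ext_b(\alpha)}$, which leaves an uncancelled factor and prevents the ratio from being controlled uniformly in $Z$.
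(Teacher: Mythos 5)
Your proposal is correct and follows essentially the same route as the paper's proof: choose $\alpha$ to be the direction of the Teichm\"uller ray from $b$ through $Z$, bound $\mathcal{E}(Z,G_1)\leq\sqrt{\ext_b(G_1)}$ via Lemma \ref{lem.decrese}, bound $\mathcal{E}(Z,G_2)$ from below via Proposition \ref{prop.keyestimate}, and finish with Theorem \ref{thm.Matsuzaki} and Proposition \ref{prop.hyp}. Your closing remark about why the specific choice of $\alpha$ is essential is a point the paper's proof makes implicitly but does not spell out.
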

\begin{proof}
We consider the Teichm\"uller geodesic ray $\gamma$ based at $b$ that passes through $Z\in\T$.
Let $\alpha\in\mf$ denote the endpoint of $\gamma$, namely $\gamma=\gamma_{\alpha}$.
Note that $Z = \gamma(t)$ for some $t$.
Then by Proposition \ref{prop.keyestimate}, we have
\[
\frac{i(G, \alpha)}{\sqrt{\ext_{b}(\alpha)}}\leq \mathcal{E}(Z,G)\iff
\frac{1}{\mathcal{E}(Z,G)}\leq \frac{\sqrt{\ext_{b}(\alpha)}}{i(G, \alpha)}
\]
and $\mathcal{E}(Z,G)\leq \mathcal{E}(b,G) = \sqrt{\ext_{b}(G)}$.

Hence we have
\begin{align*}
\frac{\sqrt{\ext_{Z}(G_{1})}}{\sqrt{\ext_{Z}(G_{2})}} 
=\frac{\mathcal{E}(Z,G_{1})}{\mathcal{E}(Z,G_{2})}
\leq &\frac{i(b, \alpha)}{i(G_{2}, \alpha)}
\frac{\sqrt{\ext_{b}(\alpha)}}{i(b, \alpha)}
\sqrt{\ext_{b}(G_{1})}\\
\leq &\frac{i(b, \alpha)}{i(G_{2}, \alpha)}
\sqrt{\nu(b)}
\sqrt{\ext_{b}(G_{1})}.
\end{align*}
If $G_{2}\in\T$, then Proposition \ref{prop.hyp} implies
\[
\frac{i(b, \alpha)}{i(G_{2}, \alpha)}\leq \exp(d_{L}(G_{2},b)).
\]
Thus, we have
\[
\frac{\sqrt{\ext_{Z}(G_{1})}}{\sqrt{\ext_{Z}(G_{2})}} 
\leq \exp(d_{L}(G_{2},b))\cdot \sqrt{\nu(b)} \cdot\sqrt{\ext_b(G_1)}.
\]
By Theorem \ref{thm.Matsuzaki}, we see that the upper bound \(\exp(d_{L}(G_{2},b))\cdot \sqrt{\nu(b)} \cdot\sqrt{\ext_b(G_1)}.\) is independent of $Z$.
\end{proof}

By Theorem \ref{thm.E2bound}, we see that $d_{E,2}$ is finite.
We next prove the point separation property.
\begin{lem}\label{lem.pA}
Let $\varphi\in\mcg$ be a pseudo-Anosov mapping class and let 
$$\mathcal{F}_{-}(\varphi), \mathcal{F}_{+}(\varphi)\in\mf$$ denote its
stable and unstable foliations so that 
\begin{enumerate}
\item 
$
\displaystyle\lim_{n\to\infty}\varphi^{\pm n}(\alpha) = \mathcal{F}_{\pm}(\varphi)
$ in $\mathbb{P}\curr$ for any $\alpha\neq \mathcal{F}_{\mp}(\varphi)\in\mathbb{P}\curr$.
\item $\displaystyle\varphi\mathcal{F}_{\pm}(\varphi) = \lambda^{\pm 1}\mathcal{F}_{\pm}(\varphi)$ in $\mf\subset\curr$.
\item $\displaystyle i(\mathcal{F}_{-}(\varphi), \mathcal{F}_{+}(\varphi)) = 1$.
\end{enumerate}

Then for any current $G\in\curr\setminus{\mathcal{F}_{+}(\varphi)}$, we have
$$\lim_{n\to\infty}\frac{\varphi^{-n}G}{\lambda^{n}} = i(G, F_{+}(\varphi))\cdot \mathcal{F}_{-}(\varphi)\in\mf.$$
\end{lem}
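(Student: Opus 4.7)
The plan is to use the fact that the intersection number with $\mathcal{F}_+(\varphi)$ behaves in a very controlled way under $\varphi$, which will pin down the scale in the projective convergence supplied by property (1). Write $\mu_n:=\varphi^{-n}G/\lambda^n$ for brevity; the goal is $\mu_n\to i(G,\mathcal{F}_+(\varphi))\,\mathcal{F}_-(\varphi)$ in $\curr$.

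First, combining $\mcg$-invariance of intersection numbers (Theorem \ref{thm.Bonahon}(2)) with property (2), I compute
$$i(\mu_n,\mathcal{F}_+(\varphi))=\lambda^{-n}i(\varphi^{-n}G,\mathcal{F}_+(\varphi))=\lambda^{-n}i(G,\varphi^n\mathcal{F}_+(\varphi))=i(G,\mathcal{F}_+(\varphi)),$$
so the intersection with $\mathcal{F}_+(\varphi)$ stays constant along the sequence. Second, by property (1) the projective classes $[\varphi^{-n}G]$ converge to $[\mathcal{F}_-(\varphi)]$ in $\mathbb{P}\curr$; I lift this to an honest convergence in $\curr$ by fixing a reference $X_0\in\T$ and setting
$$s_n:=i(\varphi^{-n}G,L_{X_0}),\qquad a:=i(\mathcal{F}_-(\varphi),L_{X_0}),$$
both of which are strictly positive because Liouville currents are filling. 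The normalized currents $\nu_n:=\varphi^{-n}G/s_n$ then satisfy $i(\nu_n,L_{X_0})=1$, and since normalization gives a continuous section of the quotient map $\curr\setminus\{0\}\to\mathbb{P}\curr$ over this slice, $\nu_n\to \mathcal{F}_-(\varphi)/a$ in $\curr$.

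Third, by continuity of the intersection number (Theorem \ref{thm.Bonahon}(2)) together with Step 1 and property (3),
$$\frac{\lambda^n\,i(G,\mathcal{F}_+(\varphi))}{s_n}=i(\nu_n,\mathcal{F}_+(\varphi))\longrightarrow \frac{i(\mathcal{F}_-(\varphi),\mathcal{F}_+(\varphi))}{a}=\frac{1}{a},$$
so $s_n/\lambda^n\to a\cdot i(G,\mathcal{F}_+(\varphi))$. Combining,
$$\mu_n=\frac{s_n}{\lambda^n}\,\nu_n\longrightarrow a\cdot i(G,\mathcal{F}_+(\varphi))\cdot \frac{\mathcal{F}_-(\varphi)}{a}=i(G,\mathcal{F}_+(\varphi))\,\mathcal{F}_-(\varphi),$$
which is the desired conclusion.

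The main technical point is the lifting in Step 2: upgrading projective convergence in $\mathbb{P}\curr$ to convergence in $\curr$. This is resolved by normalizing with respect to the pairing with a Liouville current, which is a strictly positive functional on $\curr\setminus\{0\}$ and therefore yields a continuous section of the projection along the relevant orbit. A secondary subtlety is the hypothesis: since $i(\mathcal{F}_+(\varphi),\mathcal{F}_+(\varphi))=0$, the asserted limit forces $i(G,\mathcal{F}_+(\varphi))>0$ on every admissible $G$, so the exclusion in the statement should be read as excluding the whole ray $\mathbb{R}_{>0}\mathcal{F}_+(\varphi)$; arationality of $\mathcal{F}_+(\varphi)$ then ensures that every remaining $G\in\curr$ does satisfy $i(G,\mathcal{F}_+(\varphi))>0$, so the argument above is non-vacuous.
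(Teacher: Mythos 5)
Your proof is correct, and it shares the paper's key idea: use $\mcg$-invariance of $i(\cdot,\cdot)$ together with $\varphi\mathcal{F}_{+}=\lambda\mathcal{F}_{+}$ to see that $i(\varphi^{-n}G/\lambda^{n},\mathcal{F}_{+}(\varphi))$ is constant in $n$, then let continuity of the intersection number and $i(\mathcal{F}_{-},\mathcal{F}_{+})=1$ pin down the multiplicative constant in front of $\mathcal{F}_{-}(\varphi)$. Where you diverge is in how the convergence in $\curr$ (as opposed to $\mathbb{P}\curr$) is obtained. The paper simply cites the north--south dynamics theorem in the form ``there exists $C=C(G,\varphi)>0$ with $\varphi^{-n}G/\lambda^{n}\to C\cdot\mathcal{F}_{-}(\varphi)$ in $\curr$,'' i.e.\ it outsources both the non-projective convergence and the fact that $\lambda^{n}$ is the correct normalization to the reference. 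You instead derive this from the purely projective statement in hypothesis (1): normalizing by $s_{n}=i(\varphi^{-n}G,L_{X_{0}})$ gives a continuous section of $\curr\setminus\{0\}\to\mathbb{P}\curr$ (since Liouville currents pair positively with every nonzero current), so $\nu_{n}\to\mathcal{F}_{-}/a$ in $\curr$, and then the constancy of $i(\mu_{n},\mathcal{F}_{+})$ identifies $s_{n}/\lambda^{n}\to a\,i(G,\mathcal{F}_{+})$. This buys a more self-contained argument that uses only the hypotheses (1)--(3) as stated, and it makes explicit the point (implicit in the paper's $C>0$) that the admissible $G$ must satisfy $i(G,\mathcal{F}_{+}(\varphi))>0$, which you correctly justify by reading the exclusion as the ray $\R_{>0}\mathcal{F}_{+}(\varphi)$ and invoking that $\mathcal{F}_{+}(\varphi)$ is filling and uniquely ergodic. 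The only residual pedantry is the zero current, for which your $s_{n}$ would vanish; the conclusion there is trivial, and the paper's proof has the same blind spot.
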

\begin{proof}
As the intersection number is invariant under the diagonal action of $\mcg$, we have
\begin{align}
i\left(\frac{\varphi^{-n}G}{\lambda^{n}},\mathcal{F}_{+}(\varphi)\right) = 
i\left(G,\frac{\varphi^{n}\mathcal{F}_{+}(\varphi)}{\lambda^{n}}\right) = i(G,\mathcal{F}_{+}(\varphi)).\label{eq.pA-foli}
\end{align}
Note that by the north-south dynamics of pseudo-Anosovs on $\mathbb{P}\curr$ (see e.g. \cite[Theorem 11.5.5]{EU}), there exists $C=C(G,\varphi)>0$ such that
$$\lim_{n\to\infty}\frac{\varphi^{-n}G}{\lambda^{n}} = C\cdot \mathcal{F}_{-}(\varphi)\in\mf.$$
Then by \eqref{eq.pA-foli} and $ i(\mathcal{F}_{-}(\varphi), \mathcal{F}_{+}(\varphi)) = 1$, we see that 
$$
C = i(C\cdot F_{-}(\varphi), F_{+}(\varphi)) = \lim_{n\to\infty} i\left(\frac{\varphi^{-n}G}{\lambda^{n}}, F_{+}(\varphi)\right) = i(G, F_{+}(\varphi)).
$$
Therefore we have 
$$\lim_{n\to\infty}\frac{\varphi^{-n}G}{\lambda^{n}} = i(G, F_{+}(\varphi))\cdot \mathcal{F}_{-}(\varphi)\in\mf.$$
\end{proof}

\begin{coro}\label{coro.sp}
Suppose that we are given $X\neq Y\in\T$, then there exists $Z\in\T$ such that 
\[
\log\frac{\ext_{Z}(L_{X})}{\ext_{Z}(L_{Y})} > 0.
\]
\end{coro}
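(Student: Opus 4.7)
The plan is to find a pseudo-Anosov $\varphi\in\mcg$ with dilatation $\lambda>1$ and a basepoint $Z_0\in\T$ so that, for all sufficiently large $n$, the iterate $Z:=\varphi^{n}Z_{0}$ has the desired property. Lemma~\ref{lem.pA} controls how $\varphi$ stretches an arbitrary current, so the first step is to translate the target inequality into a comparison between $\ell_X$ and $\ell_Y$ evaluated on the stable foliation $\mathcal{F}_+(\varphi)$; the second step then picks $\varphi$ so that the comparison comes out in the right direction.

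For the first step, I would use the $\mcg$-equivariance of extremal length to rewrite $\ext_{\varphi^{n}Z_0}(L_X)=\ext_{Z_0}(\varphi^{-n}L_X)$. Since $L_X,L_Y\in\curr\setminus\{\mathcal{F}_+(\varphi)\}$, Lemma~\ref{lem.pA} yields
\[
\frac{\varphi^{-n}L_X}{\lambda^{n}}\longrightarrow i(L_X,\mathcal{F}_+(\varphi))\cdot\mathcal{F}_-(\varphi)\quad\text{in }\curr,
\]
and analogously for $L_Y$. By continuity of $\ext_{Z_0}$ on $\curr$ (Theorem~\ref{thm.GT}) together with its degree-two homogeneity, one obtains
\[
\frac{\ext_{\varphi^{n}Z_0}(L_X)}{\ext_{\varphi^{n}Z_0}(L_Y)}=\frac{\ext_{Z_0}(\varphi^{-n}L_X)/\lambda^{2n}}{\ext_{Z_0}(\varphi^{-n}L_Y)/\lambda^{2n}}\longrightarrow\left(\frac{i(L_X,\mathcal{F}_+(\varphi))}{i(L_Y,\mathcal{F}_+(\varphi))}\right)^{\!2}=\left(\frac{\ell_X(\mathcal{F}_+(\varphi))}{\ell_Y(\mathcal{F}_+(\varphi))}\right)^{\!2},
\]
where the last equality uses Theorem~\ref{thm.Bonahon}(4) extended continuously to $\mf$; the denominators are nonzero because a Liouville current intersects every nontrivial measured foliation positively. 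Thus it suffices to produce $\varphi$ with $\ell_X(\mathcal{F}_+(\varphi))>\ell_Y(\mathcal{F}_+(\varphi))$.

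For the second step, since Thurston's Lipschitz distance $d_L$ separates points of $\T$ and $X\neq Y$, Proposition~\ref{prop.hyp} furnishes a simple closed curve $\alpha\in\scc$ with $\ell_X(\alpha)/\ell_Y(\alpha)>1$. The quotient $\ell_X/\ell_Y$ is continuous and degree-zero homogeneous on $\curr\setminus\{0\}$, hence descends to a continuous function on $\pmf$. Because projective classes of stable foliations of pseudo-Anosov mapping classes are dense in $\pmf$ (a classical consequence of the north--south dynamics of pseudo-Anosovs), one can pick $\varphi$ with $[\mathcal{F}_+(\varphi)]$ close enough to $[\alpha]$ in $\pmf$ to preserve the strict inequality $\ell_X(\mathcal{F}_+(\varphi))>\ell_Y(\mathcal{F}_+(\varphi))$; then $Z:=\varphi^{n}Z_0$ for $n$ large does the job.

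The main obstacle is the orientation issue in step two: knowing only $X\neq Y$ ensures distinct length spectra but not a priori that $\ell_X>\ell_Y$ at \emph{some} curve, which is why I appeal to the point-separation of the asymmetric metric $d_L$ rather than to length-spectrum rigidity alone. Once the correct direction is secured, the remainder is bookkeeping with the $\mcg$-equivariance and homogeneity of $\ext$ combined with Lemma~\ref{lem.pA}.
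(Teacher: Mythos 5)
Your proposal is correct and follows essentially the same route as the paper: choose a pseudo-Anosov $\varphi$ whose unstable foliation satisfies $i(L_X,\mathcal{F}_+(\varphi))>i(L_Y,\mathcal{F}_+(\varphi))$, apply Lemma~\ref{lem.pA} together with the continuity and homogeneity of extremal length on $\curr$, and take $Z=\varphi^n(\text{basepoint})$ for large $n$. The only differences are cosmetic --- you cancel the $\lambda^{2n}$ factors directly in the ratio rather than routing through the function $\mathcal{E}$, and you spell out (via $d_L(Y,X)>0$ and density of pseudo-Anosov foliations in $\pmf$) why such a $\varphi$ exists, a step the paper states more tersely.
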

\begin{proof}
Given two distinct points $X,Y\in\T$, we have a pseudo-Anosov mapping class $\varphi$ so that $i(L_{X},\mathcal{F}_{+}(\varphi))> i(L_{Y}, \mathcal{F}_{+}(\varphi))$
by the fact that the set of stable and unstable foliations of pseudo-Anosov mapping classes is dense in $\pmf$ (see e.g. \cite[Theorem 6.19]{FLP}).
Let us choose the base point $b\in\T$ on the Teichm\"uller geodesic axis of $\varphi$.
Then, by Lemma \ref{lem.pA},
\begin{align*}
\mathcal{E}_{\varphi^{n}b}(L_{X}) &= 
\frac{\sqrt{\ext_{\varphi^{n}b}(L_{X})}}{\lambda^{n}} = \sqrt{\ext_{b}(\varphi^{-n}L_{X}/\lambda^{n})}
= \mathcal{E}_{b}\left(\frac{\varphi^{-n}L_{X}}{\lambda^{n}}\right)\\\nonumber
&\xrightarrow{n \to \infty}
i(L_{X}, F_{+}(\varphi))\cdot\mathcal{E}_{b}(\mathcal{F}_{-}(\varphi)).
\end{align*}
As we assumed $i(L_{X},\mathcal{F}_{+}(\varphi))> i(L_{Y}, \mathcal{F}_{+}(\varphi))$, 
\begin{align}
\nonumber \frac{1}{2}\log\frac{\ext_{\varphi^{n}b}(X)}{\ext_{\varphi^{n}b}(Y)}= &
\log\frac{\mathcal{E}_{\varphi^{n}b}(X)}{\mathcal{E}_{\varphi^{n}b}(Y)} \\\label{eq.pA-E2}
\xrightarrow{n \to \infty} &
\log\frac{i(L_{X}, F_{+}(\varphi))\cdot\mathcal{E}_{b}(\mathcal{F}_{-}(\varphi))}{i(L_{Y}, F_{+}(\varphi))\cdot\mathcal{E}_{b}(\mathcal{F}_{-}(\varphi))} 
= &
\log\frac{i(L_{X}, F_{+}(\varphi))}{i(L_{Y}, F_{+}(\varphi))}>0.
\end{align}
Hence for large enough $n$, we have 
$$\log\frac{\ext_{\varphi^{n}b}(L_{X})}{\ext_{\varphi^{n}b}(L_{Y})}>0.$$
\end{proof}

\begin{thm}[Thurston's Lipschitz distance v.s. $d_{E,2}$]\label{thm.E2}
The function $d_{E,2}:\TT\to\R_{\geq 0}$ is a (possibly asymmetric) distance and we have
\begin{equation}\label{eq.E2est}
d_{L}(Y,X)\leq d_{E,2}(X,Y)\leq d_{L}(Y,b) + 
\frac{1}{2}\left(\log\nu(b) + \log{\ext_b(L_{X})}\right).
\end{equation}
for any $b\in\T$.
In particular, we have
\begin{equation}\label{eq.E2est2}
d_{L}(Y,X)\leq d_{E,2}(X,Y)\leq d_{L}(Y,X) + 
\frac{1}{2}\left(\log\nu(X) + \log\left(\frac{~\pi^{3}}{2}|\chi(S)|\right)\right).
\end{equation}

Furthermore, 
let $\varphi\in\mcg$ be a pseudo-Anosov mapping class with dilatation $\lambda$.
Then we have
$$
\tau(\varphi,d_{E,2}):=\lim_{n\to\infty}\frac{1}{n}d_{E,2}(X,\varphi^{n} X) = \log\lambda.
$$
\end{thm}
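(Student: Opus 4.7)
The plan is to split the theorem into four pieces and address each with tools already developed in the paper. First, that $d_{E,2}$ is a (possibly asymmetric) distance: finiteness is contained in Theorem \ref{thm.E2bound} (take $G_2=L_Y\in\T$), point separation is Corollary \ref{coro.sp}, and the triangle inequality then follows from Theorem \ref{thm.dist}. Second, the upper bound in \eqref{eq.E2est} is immediate from Theorem \ref{thm.E2bound} applied with $G_1=L_X$ and $G_2=L_Y$: take logarithms and the supremum over $Z\in\T$. Specializing $b=X$ and invoking Theorem \ref{thm.ExHyp}'s identity $\ext_X(L_X)=\frac{\pi^{3}}{2}|\chi(S)|$ yields \eqref{eq.E2est2}.

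For the lower bound $d_L(Y,X)\le d_{E,2}(X,Y)$, I would re-run the limit computation inside the proof of Corollary \ref{coro.sp}, now for an arbitrary pseudo-Anosov $\varphi$ (dropping the positivity assumption used there for point separation). With $b$ on the Teichm\"uller axis of $\varphi$, that same computation gives
\[
d_{E,2}(X,Y)\ge \lim_{n\to\infty}\tfrac{1}{2}\log\frac{\ext_{\varphi^n b}(L_X)}{\ext_{\varphi^n b}(L_Y)} = \log\frac{i(L_X,F_+(\varphi))}{i(L_Y,F_+(\varphi))}.
\]
Since pseudo-Anosov unstable foliations are dense in $\pmf$ (see e.g.\ \cite[Theorem 6.19]{FLP}) and $i(\cdot,\cdot)$ is continuous on $\curr$, this pointwise lower bound can be promoted to a supremum over $\alpha\in\pmf$, and Proposition \ref{prop.hyp} identifies the resulting supremum with $d_L(Y,X)$.

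For the translation length, substituting $Y=\varphi^n X$ into \eqref{eq.E2est2} yields the sandwich
\[
d_L(\varphi^n X,X) \le d_{E,2}(X,\varphi^n X) \le d_L(\varphi^n X,X) + C(X),
\]
with $C(X)=\tfrac{1}{2}\bigl(\log\nu(X)+\log\tfrac{\pi^{3}}{2}|\chi(S)|\bigr)$ independent of $n$. Dividing by $n$ and sending $n\to\infty$, the additive constant washes out and both ends converge to $\log\lambda$ by Thurston's classical identity that the $d_L$-translation length of a pseudo-Anosov with dilatation $\lambda$ equals $\log\lambda$.

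The main obstacle I anticipate is the lower bound step: one must track the asymmetry of Thurston's distance carefully so that the ratio $\log(i(L_X,F_+)/i(L_Y,F_+))$ coming from the pseudo-Anosov limit does assemble, under density and continuity, into the correct directional supremum $d_L(Y,X)$ identified via Proposition \ref{prop.hyp}; the remaining steps are direct invocations of already-proved results.
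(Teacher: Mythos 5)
Your proposal is correct and follows essentially the same route as the paper: finiteness via Theorem \ref{thm.E2bound}, separation via Corollary \ref{coro.sp}, the upper bounds by specializing $b=X$ with Theorem \ref{thm.ExHyp}, the lower bound by running the limit \eqref{eq.pA-E2} for arbitrary pseudo-Anosovs and using density of their foliations in $\pmf$ together with Proposition \ref{prop.hyp}, and the translation length by the sandwich from \eqref{eq.E2est2}. No substantive differences from the paper's argument.
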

\begin{proof}
Theorem \ref{thm.E2bound} and Corollary \ref{coro.sp} imply that 
$d_{E,2}<\infty$ and $\ext_{\cdot}(\cdot)$ separates points of $\T$.
Then Theorem \ref{thm.summary} shows that $d_{E,2}$ is a (possibly asymmetric) distance.
The upper bound in \eqref{eq.E2est} follows by 
Theorem \ref{thm.E2bound}.
The estimate \eqref{eq.E2est2} follows by letting $b = X$ and referring to Theorem \ref{thm.ExHyp}.

For the lower bound, we see in \eqref{eq.pA-E2} that
$$
\frac{1}{2}\log\frac{\ext_{\varphi^{n}b}(L_{X})}{\ext_{\varphi^{n}b}(L_{Y})}
\to 
\log\frac{i(L_{X}, F_{+}(\varphi))}{i(L_{Y}, F_{+}(\varphi))}
$$
Since 
$$
d_{L}(Y,X) = \sup_{\mathcal{F}\in\pmf}\log\frac{i(L_{X}, \mathcal{F})}{i(L_{Y}, \mathcal{F})}
$$
and the set of stable foliations of all pseudo-Anosov maps is dense in $\pmf$ (see e.g. \cite[Theorem 6.19]{FLP}), the lower bound of \eqref{eq.E2est} follows.

Let $\varphi\in\mcg$ be a pseudo-Anosov map with dilatation $\lambda$.
Then, the translation length of $\varphi$ with respect to Thurston's metric $d_{L}$ is known to be $\log\lambda$.
Hence $\tau(\varphi,d_{E,2}) = \log\lambda$ immediately follows from \eqref{eq.E2est2}.
\end{proof}

Now we have a similar result to Proposition \ref{prop.int-est}.
\begin{coro}\label{coro.Ext-est}
\begin{equation}\label{eq.E2est3}
d_{L}(Y,X) - \frac{1}{2}\log\nu(Y) \leq \frac{1}{2}\log({\ext_Y(L_{X})})\leq \dt(X,Y) + \log\left(\frac{~\pi^{3}}{2}|\chi(S)|\right).
\end{equation}
\end{coro}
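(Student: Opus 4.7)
The plan is to obtain both bounds by repackaging results we already have: the lower bound will come from Theorem~\ref{thm.E2} applied at a judiciously chosen basepoint, and the upper bound from the Kerckhoff-style identity $d_{E,1}=\dt$ in Proposition~\ref{prop.E1} combined with the self-intersection formula in Theorem~\ref{thm.ExHyp}. No new analytic input should be required.

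For the lower bound, I would specialize the base point in the right-hand inequality of \eqref{eq.E2est} by setting $b=Y$. Then $d_{L}(Y,b)=d_{L}(Y,Y)=0$, so the estimate collapses to
\[
d_{L}(Y,X) \leq d_{E,2}(X,Y) \leq \tfrac{1}{2}\log\nu(Y) + \tfrac{1}{2}\log \ext_{Y}(L_{X}),
\]
and rearranging gives exactly $d_{L}(Y,X)-\tfrac{1}{2}\log\nu(Y) \leq \tfrac{1}{2}\log\ext_{Y}(L_{X})$. The only thing to note here is that the chain uses the fact, established as part of Theorem~\ref{thm.E2}, that \eqref{eq.E2est} holds for every $b \in \T$, so we are free to take $b=Y$.

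For the upper bound, I would exploit the symmetry $\dt(X,Y)=\dt(Y,X)$ of the Teichm\"uller distance, together with the identity $d_{E,1}=\dt$ from Proposition~\ref{prop.E1}, which reads
\[
\dt(Y,X) \;=\; \tfrac{1}{2}\sup_{Z\in\T}\log\frac{\ext_{Y}(Z)}{\ext_{X}(Z)}.
\]
Since $\sqrt{\ext}$ extends continuously to $\curr$ by Theorem~\ref{thm.GT} and $L_{X}\in\curr$ is approximated by Liouville currents coming from points of $\T$, the supremum bounds the value at the current $L_{X}$, giving
\[
\dt(X,Y) \;\geq\; \tfrac{1}{2}\log\frac{\ext_{Y}(L_{X})}{\ext_{X}(L_{X})}.
\]
Plugging in $\ext_{X}(L_{X}) = \tfrac{\pi^{3}}{2}|\chi(S)|$ from Theorem~\ref{thm.ExHyp} and rearranging yields
\[
\tfrac{1}{2}\log\ext_{Y}(L_{X}) \;\leq\; \dt(X,Y) + \tfrac{1}{2}\log\!\left(\tfrac{\pi^{3}}{2}|\chi(S)|\right),
\]
which, since $\tfrac{\pi^{3}}{2}|\chi(S)|>1$, is weaker than the stated bound with $\log$ in place of $\tfrac{1}{2}\log$.

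The proof has essentially no obstacles; the only subtle point is recognizing that it is the \emph{right}-hand side of \eqref{eq.E2est} (with the free basepoint $b$), rather than the more symmetric-looking \eqref{eq.E2est2}, that is flexible enough to produce the $\nu(Y)$ factor on the lower side. Once one chooses $b=Y$ there, the rest is algebra and a direct appeal to Theorems~\ref{thm.GT} and~\ref{thm.ExHyp}.
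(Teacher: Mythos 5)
Your proof is correct and follows essentially the same route as the paper: the lower bound by specializing $b=Y$ in \eqref{eq.E2est}, and the upper bound by evaluating the supremum defining $d_{E,1}=\dt$ (Proposition \ref{prop.E1}) at the Liouville current $L_X$ and invoking Theorem \ref{thm.ExHyp}. In fact your upper bound comes out with the sharper constant $\tfrac{1}{2}\log\bigl(\tfrac{\pi^{3}}{2}|\chi(S)|\bigr)$, which correctly implies the stated one; the remark about approximating $L_X$ by Liouville currents is unnecessary since $L_X$ already lies in the index set of the supremum.
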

\begin{proof}
The lower bound is obtained by just putting $b = Y$ in \eqref{eq.E2est}.

The upper bound is given by Theorem \ref{thm.ExHyp} and Proposition \ref{prop.E1}:
\begin{align*}
&\log\frac{\ext_{X}(L_{Y})}{\ext_{Y}(L_{Y})}\leq \sup_{Z\in\T}\log\frac{\ext_{X}(L_{Z})}{\ext_{Y}(L_{Z})} = \dt(X,Y)\\
\Longrightarrow &\log(\ext_{X}(L_{Y}))-\log\left(\frac{~\pi^{3}}{2}|\chi(S)|\right)\leq \dt(X,Y).
\end{align*}
\end{proof}
\begin{rmk}
As the hyperbolic metric is one of the conformal metrics, we always obtain 
$$
\frac{i(X,Y)^{2}}{\mathrm{Area}(Y)}\leq \ext_{Y}(L_{X})
$$
by the definition of the extremal length.
Hence, the lower bound of $\ext_{Y}(L_{X})$ in terms of Thurston's distance and systole of $Y$ follows from Proposition \ref{prop.int-est}.
However, the lower bound in \eqref{eq.E2est3} is better than that from Proposition \ref{prop.int-est}.
\end{rmk}
\subsection{Four compactifications of $\T$ via extremal length}
In Theorem \ref{thm.summary}, we showed that a function $I:\T\times \T$ can possibly give four different compactifications.
\begin{thm}\label{thm.ext-final}
Both the Gardiner-Masur boundary and the Thurston boundary appear naturally as horofunction boundaries via extremal lengths, namely
\begin{enumerate}
\item The horofunction boundary with respect to the Teichm\"uller distance gives the Gardiner-Masur compactification.
\item The horofunction compactification via 
$$\T\ni Z\mapsto\log\ext_{(\cdot)}(L_{Z})-\log\ext_{b}(L_{Z})\in\Lipi(\T,\dt)$$
can be identified with the Thurston compactification.
\end{enumerate}
\end{thm}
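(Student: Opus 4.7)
Part (1) is immediate: by Proposition~\ref{prop.E1}, the distance $d_{E,1}$ coincides with the Teichm\"uller distance $\dt$, so the horofunction compactification via $d_{E,1}$ is the classical horofunction compactification of $(\T,\dt)$, which Liu--Su~\cite{LS} identified with the Gardiner--Masur compactification.

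For part (2), my plan is to extend the map $\Psi:\T\to\Lipi(\T,\dt)$, $Z\mapsto h_Z$ with $h_Z(X):=\log\ext_X(L_Z)-\log\ext_b(L_Z)$, continuously to a map
\[
\overline\Psi:\T\cup\pmf\to\Lipi(\T,\dt)
\]
by setting $\overline\Psi([\mu])(X):=\log\ext_X(\mu)-\log\ext_b(\mu)$ for $[\mu]\in\pmf$. The assignment is well-defined on projective classes, since both $\ext_X(\mu)$ and $\ext_b(\mu)$ scale by $t^2$ under $\mu\mapsto t\mu$, so the difference is invariant. For continuity, Thurston convergence $Z_n\to[\mu]$ means, via Bonahon's embedding (Theorem~\ref{thm.Bonahon}(3)), that $L_{Z_n}/c_n\to\mu$ in $\curr$ for some scalars $c_n>0$. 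Theorem~\ref{thm.GT} then gives $\ext_X(L_{Z_n})/c_n^2\to\ext_X(\mu)$ for each $X$, and the factors $c_n$ cancel inside $h_{Z_n}(X)$, producing pointwise convergence to $\overline\Psi([\mu])(X)$.

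The main obstacle is injectivity of $\overline\Psi$. On $\T$ it follows from Theorem~\ref{thm.lipi} together with Corollary~\ref{coro.sp}. To separate $h_Z$ (with $Z\in\T$) from $\overline\Psi([\mu])$ (with $[\mu]\in\pmf$), I evaluate along the Teichm\"uller ray $\gamma_\mu$ based at $b$: the standard identity $\ext_{\gamma_\mu(t)}(\mu)=e^{-2t}\ext_b(\mu)$ gives $\overline\Psi([\mu])(\gamma_\mu(t))=-2t$, while Proposition~\ref{prop.keyestimate} yields $\ext_{\gamma_\mu(t)}(L_Z)\geq i(\mu,L_Z)^2 e^{2t}/\ext_b(\mu)$, forcing $h_Z(\gamma_\mu(t))\to+\infty$ (since $i(\mu,L_Z)=\ell_Z(\mu)>0$). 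For injectivity on $\pmf$, suppose $\overline\Psi([\mu_1])=\overline\Psi([\mu_2])$ for distinct projective classes. Then $\ext_X(\mu_1)/\ext_X(\mu_2)$ is a constant $C$ in $X$. Evaluating at $X=\varphi^n b$ for a pseudo-Anosov $\varphi$ with unstable foliation $\mathcal{F}_+(\varphi)\neq\mu_1,\mu_2$, using $\mcg$-invariance $\ext_{\varphi^n b}(\mu_i)=\ext_b(\varphi^{-n}\mu_i)$, Lemma~\ref{lem.pA}, and continuity of $\sqrt{\ext_b}$ on $\curr$ (Theorem~\ref{thm.GT}), the ratio tends to $i(\mu_1,\mathcal{F}_+(\varphi))^2/i(\mu_2,\mathcal{F}_+(\varphi))^2$. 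Since unstable foliations of pseudo-Anosov maps are dense in $\pmf$, this forces $i(\mu_1,\alpha)/i(\mu_2,\alpha)=\sqrt{C}$ for every $\alpha\in\pmf$, contradicting $[\mu_1]\neq[\mu_2]$ since intersection numbers with all projective measured foliations determine the projective class.

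To conclude, $\overline\Psi$ is a continuous injection from the compact Thurston compactification $\T\cup\pmf$ into the Hausdorff space $\Lipi(\T,\dt)$, hence a homeomorphism onto its image, which identifies the Thurston compactification with the horo-compactification of part (2).
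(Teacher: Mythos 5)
Your proposal is correct and follows essentially the same route as the paper: extend the map continuously to the closure of $\T$ in $\mathbb{P}\curr$ (the Thurston compactification) using Theorem \ref{thm.GT} and Theorem \ref{thm.Bonahon}, and then apply the standard fact that a continuous injection from a compact space to a Hausdorff (metrizable) space is a homeomorphism onto its image. The one place you go beyond the paper is injectivity of the extended map, which the paper asserts without detail; your ray argument separating $\T$ from $\pmf$ (via Proposition \ref{prop.keyestimate} and the identity $\ext_{\gamma_{\mu}(t)}(\mu)=e^{-2t}\ext_{b}(\mu)$) and your pseudo-Anosov dynamics argument separating points of $\pmf$ (via Lemma \ref{lem.pA} and density of unstable foliations) give a correct and complete verification of that step.
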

\begin{proof}
The first item (1) is due to \cite{LS}.
Let us prove item (2).
The map 
$$
\T\to\Lipi(\T,\dt),\quad Z\mapsto \log\frac{\ext_{(\cdot)}(L_{Z})}{\ext_{b}(L_{Z})}
$$
extends continuously to the $\mathbb{P}\curr$ by \cite{GT}, the image is contained in $\Lipi(\T,\dt)$ by Proposition \ref{prop.E1}.
Recall that the closure of $\T\subset \mathbb{P}\curr$ is identified with the Thurston compactification $\overline\T$ \cite{Bonahon}.
Hence, we have an injective continuous map $\overline\T\to \Lipi(\T,\dt)$ 
from a compact space $\overline\T$ to a metrizable space.
By the standard theory of topology, such a map is a homeomorphism onto its image.
Hence, we have the proof of item (2).
\end{proof}
Although the fact below follows immediately from \ref{coro.Ext-est}, we give an alternative proof using the discussion in Section \ref{sec.horoLip}.
\begin{coro}
Let $\varphi\in\mcg$ be a pseudo-Anosov mapping class with stretch factor $\lambda$.
$$
\lim_{n\to\infty}\frac{1}{n}\log\ext_{X}(\varphi^{n}Y) = \tau_{\dt}(\varphi) = \log\lambda.
$$
\end{coro}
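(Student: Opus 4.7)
The plan is to invoke Theorem \ref{thm.ns-trans} applied to $I=I_E$ from Section \ref{sec.counterpart}, with $M=N=\T$, in its $M\leftrightarrow N$-swapped form. Since the target limit concerns $I_E(X,\varphi^n Y)$ (the orbit moving in the second coordinate), the relevant translation length is $\tau_{I_E,N}(\varphi)$, and the swapped version of Theorem \ref{thm.ns-trans}, applied to $(Y,X)\mapsto I_E(X,Y)$, identifies it with $\tau_{d_{I_E,N}}(\varphi)=\tau_{d_{E,2}}(\varphi)$.

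First, I verify the framework hypotheses. The group $\mcg$ acts diagonally on $\T\times\T$ preserving $I_E$ by equivariance of extremal length, and both distances $d_{E,1}=\dt$ and $d_{E,2}$ are finite by Proposition \ref{prop.E1} and Theorem \ref{thm.E2}. Hence Theorem \ref{thm.grp-action} extends the $\varphi$-action continuously to each horo-compactification produced by Theorem \ref{thm.summary}.

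Second, I verify the north-south dynamics. The swapped form of Theorem \ref{thm.ns-trans} requires $\varphi$ to act with north-south dynamics on $\overline{\mathcal{L}(M)}\subset\Lipi(N,d_{E,2}^{\mathrm{sym}})$. By Theorem \ref{thm.ext-final}(2) this compactification is canonically identified with the Thurston compactification $\overline{\T}$, on which every pseudo-Anosov map is classically known to act with north-south dynamics, its fixed points being the stable and unstable foliations $\mathcal{F}_\pm(\varphi)$ (compatible with Lemma \ref{lem.pA}). Nonnegativity of the three translation lengths $\tau_{I_E,M}(\varphi),\tau_{I_E,N}(\varphi),\tau_{d_{E,2}}(\varphi)$ is immediate from $\tau_{d_{E,2}}(\varphi)=\log\lambda>0$ (Theorem \ref{thm.E2}), with Corollary \ref{coro.Ext-est} controlling the two $\tau_{I_E,\cdot}$-translation lengths from below by a positive multiple of $\log\lambda$ along the pseudo-Anosov orbit.

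Applying the swapped Theorem \ref{thm.ns-trans}(1) then yields $\tau_{I_E,N}(\varphi)=\tau_{d_{E,2}}(\varphi)=\log\lambda$. The existence of the limit in
\[
\tau_{I_E,N}(\varphi)=\lim_{n\to\infty}\frac{1}{n}I_E(X,\varphi^n Y)=\lim_{n\to\infty}\frac{1}{2n}\log\ext_X(\varphi^n Y)
\]
is furnished by (the swapped form of) Proposition \ref{prop.horo-trans}, and this yields the claimed limit. The principal obstacle is supplying the north-south dynamics on the horo-compactification, which is not automatic in the abstract framework; it is delivered here by Theorem \ref{thm.ext-final}(2), which exhibits the relevant compactification as $\overline{\T}$ and thereby imports the classical pseudo-Anosov dynamics. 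The remaining steps reduce to bookkeeping from earlier results in the paper.
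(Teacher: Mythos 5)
There is a genuine gap at the one step you yourself identify as the principal obstacle. The $M\leftrightarrow N$-swapped form of Theorem \ref{thm.ns-trans} that you invoke requires north--south dynamics on $\overline{\mathcal{L}(M)}\subset\Lipi(\T,d_{E,2}^{\mathrm{sym}})$, i.e.\ on the closure of the family $\T\ni X\mapsto I_E(X,\cdot)-I_E(X,b)$, viewed as $1$-Lipschitz functions of the \emph{second} (current) variable for the distance $d_{E,2}$; this is item (2) of Theorem \ref{thm.summary}. But Theorem \ref{thm.ext-final}(2), which you cite to supply that dynamics, concerns a \emph{different} compactification: the closure of $\T\ni Z\mapsto\log\ext_{(\cdot)}(L_Z)-\log\ext_{b}(L_Z)$ inside $\Lipi(\T,\dt)$, i.e.\ item (1) of Theorem \ref{thm.summary}, where the current is the parameter and the base surface is the variable. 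The paper stresses that the four compactifications of Theorem \ref{thm.summary} are a priori different and identifies only items (1) and (4) (Thurston and Gardiner--Masur, respectively); nothing in the paper establishes north--south dynamics on your item-(2) compactification, so your citation does not deliver the hypothesis you need.

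The repair is to not swap, which is exactly the paper's (one-line) route: apply Theorem \ref{thm.ns-trans} in its stated form with $d_{I_E,M}=d_{E,1}=\dt$ and with $\overline{\mathcal{L}(N)}\subset\Lipi(\T,\dt)$, which Theorem \ref{thm.ext-final}(2) genuinely identifies ($\mcg$-equivariantly) with the Thurston compactification, so the classical pseudo-Anosov dynamics transports over. Then $\tau_{\dt}(\varphi)=\log\lambda>0$ gives $\tau_{I_E,M}(\varphi^{-1})=\tau_{\dt}(\varphi)$, and $\mcg$-invariance $I_E(\varphi^{-n}X,Y)=I_E(X,\varphi^{n}Y)$ converts this into the asserted limit; note this route produces $\tau_{\dt}(\varphi)$ directly, whereas yours produces $\tau_{d_{E,2}}(\varphi)$ and needs Theorem \ref{thm.E2} to convert. (A side remark applying to both arguments: since $I_E=\tfrac12\log\ext$, what actually comes out is $\lim_{n\to\infty}\tfrac1{2n}\log\ext_{X}(\varphi^{n}Y)=\log\lambda$, so the displayed corollary appears to be missing a factor $\tfrac12$; this is an issue with the statement, not with your reasoning.)
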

\begin{proof}
By item (2) of Theorem \ref{thm.ext-final}, we see that the assumption of Theorem \ref{thm.ns-trans} is satisfied for $\varphi$.
\end{proof}

\bibliographystyle{alpha} 
\bibliography{references} 
\end{document}